\documentclass[12pt]{amsart}       
\usepackage{txfonts}
\usepackage{amssymb}
\usepackage{eucal}
\usepackage{graphicx}
\usepackage{amssymb}
\usepackage{amsmath}
\usepackage{amscd}
\usepackage[all]{xy}           
\usepackage{amsfonts,latexsym}
\usepackage{xspace}
\usepackage{epsfig}
\usepackage{float}
\usepackage{mathrsfs} 
\usepackage{color}
\usepackage{fancybox}
\usepackage{colordvi}
\usepackage{multicol}
\usepackage{colordvi}
\usepackage{ifpdf}
\usepackage[colorlinks,final,backref=page,hyperindex]{hyperref}

\usepackage[active]{srcltx} 

\makeatletter
\newcommand{\model@english@today}{%
  \ifcase\month\or January\or February\or March\or April\or May\or June%
  \or July\or August\or September\or October\or November\or December\fi
  \space\number\day, \number\year}
\let\today\model@english@today
\AtBeginDocument{%
  \let\today\model@english@today
}
\makeatother

\usepackage{amsmath, amssymb, mathtools}
\usepackage{tikz-cd} 
\usepackage{tikz}

\usepackage{graphicx}

\usepackage{longtable}
\usepackage{booktabs}
\usepackage{array}

\usepackage{xcolor}
\usepackage[most]{tcolorbox}
\usepackage{enumitem}
\newtheorem{theorem}{Theorem}[section]

\newtheorem{proposition}[theorem]{Proposition}
\newtheorem{lemma}[theorem]{Lemma}
\newtheorem{coro}[theorem]{Corollary}
\newtheorem{corollary}[theorem]{Corollary}
\newtheorem{prop-def}{Proposition-Definition}[section]
\newtheorem{coro-def}{Corollary-Definition}[section]

\theoremstyle{definition}
\newtheorem{definition}[theorem]{Definition}
\newtheorem{remark}[theorem]{Remark}

\newtheorem{assumption}[theorem]{Assumption}

\newcommand{\nc}{\newcommand}
\nc{\Liu}[1]{\textcolor{red}{#1}}
\nc{\tblue}[1]{\textcolor{blue}{#1}}
\nc{\tgreen}[1]{\textcolor{green}{#1}}
\nc{\tpurple}[1]{\textcolor{purple}{#1}}
\nc{\btred}[1]{\textcolor{red}{\bf #1}}
\nc{\btblue}[1]{\textcolor{blue}{\bf #1}}
\nc{\btgreen}[1]{\textcolor{green}{\bf #1}}
\nc{\btpurple}[1]{\textcolor{purple}{\bf #1}}
\nc{\NN}{{\mathbb N}}
\nc{\ncsha}{{\mbox{\cyr X}^{\mathrm NC}}} \nc{\ncshao}{{\mbox{\cyr
X}^{\mathrm NC}_0}}

\newcommand{\delete}[1]{}

\nc{\mlabel}[1]{\label{#1}}
\nc{\mcite}[1]{\cite{#1}}
\nc{\mref}[1]{\ref{#1}}
\nc{\meqref}[1]{\eqref{#1}}
\nc{\mbibitem}[1]{\bibitem{#1}}

\delete{
\nc{\mlabel}[1]{\label{#1}{\hfill \hspace{1cm}{\bf{{\ }\hfill(#1)}}}}
\nc{\mcite}[1]{\cite{#1}{{\bf{{\ }(#1)}}}}
\nc{\mref}[1]{\ref{#1}{{\bf{{\ }(#1)}}}}
\nc{\meqref}[1]{\eqref{#1}{{\bf{{\ }(#1)}}}}
\nc{\mbibitem}[1]{\bibitem[\bf #1]{#1}}
}
\font\cyr=wncyr10 
\font\scyr=wncyr6
\nc{\sha}{{\mbox{\scyr X}}}
\nc{\shap}{{\mbox{\cyrs X}}} 
\nc{\shpr}{\diamond}    
\nc{\shp}{\ast} \nc{\shplus}{\shpr^+}
\nc{\shprc}{\shpr_c}    
\nc{\dep}{\mrm{dep}} \nc{\lc}{\lfloor} \nc{\rc}{\rfloor}
\nc{\db}{\leq_{\rm db}} \nc{\bfk}{\bf k} \nc{\RR}{\mathbb{R}}

\font\cyr=wncyr10 \font\cyrs=wncyr7
\nc{\li}[1]{\textcolor{red}{#1}}
\nc{\lir}[1]{\textcolor{red}{Li:#1}}
\nc{\yi}[1]{\textcolor{blue}{Yi: #1}}
\nc{\xing}[1]{\textcolor{purple}{Xing:#1}}
\nc{\revise}[1]{\textcolor{red}{#1}}
\nc{\q}[1]{\textcolor{blue}{q: #1}}

\providecommand{\dt[1]}{\Delta_{[#1]}}   \providecommand{\loc}{{\rm loc}}

\NewDocumentCommand{\sint}{e{_^}}{\mathop{\textstyle\int}\nolimits_{\mkern-6mu #1}^{#2}}
 
\newtcolorbox{qshuStepBox}[1]{
  enhanced,
  breakable,
  colback=blue!2,
  colframe=blue!45!black,
  title={#1},
  fonttitle=\bfseries,
  boxrule=0.45pt,
  arc=1mm,
  left=1.2em,
  right=1.2em,
  top=0.7em,
  bottom=0.7em,
  before skip=0.9em,
  after skip=0.9em
}

\begin{document}

\title[Random dynamical systems generated by drift-containing CRDE]{Random dynamical systems generated by rough differential equations driven by controlled rough paths}

%
\author{Xing Gao$^*$}\thanks{*Corresponding author}
\address{School of Mathematics and Statistics, Lanzhou University,
Lanzhou, 730000, China; Gansu Provincial Research Center for Basic Disciplines of Mathematics and Statistics, Lanzhou, 730070, China
}
\email{gaoxing@lzu.edu.cn}

\author{Jiaqi Liu}
\address{School of Mathematics and Statistics, Lanzhou University
Lanzhou, 730000, China
}
\email{jiaqiliu2025@lzu.edu.cn}

\date{\today}
\begin{abstract}
We study rough differential equations driven by controlled rough paths with drift. By means of the canonical lift of the controlled driver, we establish pathwise well-posedness and continuity. Under compatible time-shift conditions, we construct differentiable solution cocycles for the associated dynamical system generated by the equations considered here. Furthermore, assuming the corresponding regularity, moment, and spectral hypotheses, we prove the invariance of the local stable, unstable, and center manifolds.
\end{abstract}

\makeatletter
\@namedef{subjclassname@2020}{\textup{2020} Mathematics Subject Classification}
\makeatother
\subjclass[2020]{
60L20, 
60L99, 
37H10, 
37H15, 
37H30. 
}

\keywords{controlled rough paths, rough differential equations, random dynamical systems, invariant manifolds.}

\maketitle

\tableofcontents

\setcounter{section}{0}

\allowdisplaybreaks

\section{Introduction}\label{sec:introduction}
We first explain the passage from classical rough differential equations (RDEs)
to rough differential equations driven by controlled rough paths (CRDEs), and
then study the solution cocycles generated by these equations and the
associated random dynamical systems. We finally summarize the main
challenges arising in this dynamical framework and the corresponding
contributions of the present paper.

\subsection{From RDE to CRDE}
\label{subsec:intro-rde-to-crde}

RDEs provide a pathwise approach to
differential equations driven by irregular signals.
Building on Young's integration theory~\cite{Young1936},
Lyons~\cite{Lyons1998} developed rough path theory by enhancing the
driving signal with iterated-integral information.
This makes the solution map well defined and continuous below the
classical Young regularity threshold and provides a deterministic
framework for Wong-Zakai type approximations~\cite{WZ1965}.
Further developments can be found in~\cite{FH20, FV10b, LQ2002}.
In the level-two regime considered here, the input is a rough path
${\bf X}=(X,\mathbb X)$, and a classical RDE with drift has the form
\begin{equation*}
\label{eq:intro-classical-rde}
dY_t
=
F(Y_t)\,d{\bf X}_t
+
F_0(Y_t)\,dt.
\end{equation*}
which is introduced explicitly as~\eqref{eq-diff-RDE} in
Section~\ref{sec:preli}.
The second level $\mathbb X$ provides the iterated-integral information
needed to interpret the rough integral.

Gubinelli's theory of controlled rough paths~\cite{Gubinelli04}
provides a complementary viewpoint.
For $\alpha\in(1/3,1/2]$, an ${\bf X}$-controlled path
${\bf Z}=(Z,Z')$ satisfies
\[
Z_{s,t}
=
Z'_sX_{s,t}
+
R^Z_{s,t},
\qquad
\|R^Z_{s,t}\|
\lesssim
|t-s|^{2\alpha},
\]
where $Z'$ describes the leading response of $Z$ to the reference
rough path $X$.
This framework also allows one controlled rough path to be integrated
against another~\cite{Gubinelli04}; quantitative continuity estimates
were further developed in~\cite{BG2022}.

The distinction between the reference signal and the effective driver
is particularly important in multi-layer rough systems, where the
signal entering an upper-level equation may itself be generated by a
lower-level rough equation.
A basic motivating example is
\begin{equation}
\label{eq:intro-layered-system}
\begin{aligned}
dZ_t
=
G(Z_t)\,d{\bf X}_t
+
G_0(Z_t)\,dt,
\qquad 
dY_t
=
F(Y_t)\,d{\bf Z}_t
+
F_0(Y_t)\,dt.
\end{aligned}
\end{equation}
Under suitable regularity assumptions, the first equation produces an
${\bf X}$-controlled path
\[
{\bf Z}=(Z,Z'),
\qquad
Z'_t=G(Z_t),
\]
which then drives the second equation.
Such systems naturally lead to rough differential equations driven by
controlled rough paths (CRDEs); see~\cite{FH20,FV10b,Gubinelli04}.
The pathwise theory of driftless CRDEs was developed in~\cite{LiGao2025},
where the controlled-against-controlled rough integral was revisited
and a universal limit theorem for CRDEs was established, extending
the classical universal limit theorem for RDEs to the controlled-driver setting.
From a numerical perspective, controlled B-series and controlled
Runge-Kutta methods for driftless CRDEs were recently developed
in~\cite{AnFanGao2026}, together with tree-based order conditions and
corresponding convergence estimates.

In this paper, we study the drift-containing CRDE
\begin{equation}
\label{eq:intro-controlled-rde}
dY_t
=
F(Y_t)\,d{\bf Z}_t
+
F_0(Y_t)\,dt,
\end{equation}
with ${\bf Z}$ an ${\bf X}$-controlled rough path.
A precise formulation is given in~\eqref{main-diff-CRDE} in
Section~\ref{sec:crde-theory}.
A solution is represented by the controlled pair
\[
{\bf Y}=(Y,Y'),
\qquad
Y'_t=F(Y_t)Z'_t.
\]
The classical RDE is recovered by taking $U=V$ and
${\bf Z}=(X,\mathrm{Id}_V)$.
There are three main reasons for studying drift-containing CRDEs
directly.
First, in the layered system~\eqref{eq:intro-layered-system}, the
natural structure is
\[
{\bf X}
\longrightarrow
{\bf Z}
\longrightarrow
{\bf Y}.
\]
The controlled formulation keeps visible how the effective driver
${\bf Z}$ is generated from ${\bf X}$ and how perturbations propagate
through the system.

Second, the drift terms are not merely auxiliary.
The term $G_0$ affects the evolution of the effective driver, while
$F_0$ contributes independently to the downstream dynamics.
For example, if the controlled rough driver is constant, then
\eqref{eq:intro-controlled-rde} reduces to
\[
dY_t=F_0(Y_t)\,dt,
\]
whereas the corresponding driftless equation has only constant
trajectories.
Thus the drift may change equilibria, linearization, stability, and
the local dynamical geometry, which is essential for our later study
of stationary points and invariant manifolds.

Finally, an ${\bf X}$-controlled driver ${\bf Z}=(Z,Z')$ does not
come with an a priori second level above $Z$.
Starting from the controlled data $({\bf X},{\bf Z})$, one constructs
\[
\mathbb Z_{s,t}
:=
\sint_s^t Z_{s,r}\otimes d Z_r,
\qquad
\mathcal Z=(Z,\mathbb Z);
\]
see~\cite{LiGao2025}.
This construction is recalled in
Lemma~\ref{thm:lift-canonical-lift-rough-path}.
Theorem~\ref{thm:solution-equivalence} shows that a CRDE solution
$(Y,F(Y)Z')$ corresponds to an RDE solution $(Y,F(Y))$ driven by
$\mathcal Z$. Thus the two formulations have the same state path $Y$,
but different controlled solution structures.
Nevertheless, the controlled formulation remains useful because the
pair $({\bf X},{\bf Z})$ retains how the effective driver is generated
from the underlying rough signal.
This additional structure is crucial for layered rough systems and
for the random dynamical analysis developed below; see also
Remark~\ref{rmk:controlled-formulation-significance}.

\subsection{Solution cocycles and random dynamics}
\label{subsec:intro-solution-cocycles}

Finite-time well-posedness describes individual solution paths,
but does not by itself describe their long-time organization.
Random dynamical systems (RDS) provide the appropriate framework
for studying invariant objects and stability along a fixed
random environment~\cite{Arnold1998,ArnoldScheutzow1995,
CF1994,Kifer1986}.
Their defining consistency condition is the cocycle identity:
\[
\varphi_{t+s}^{\omega}
=
\varphi_t^{\theta_s\omega}\circ\varphi_s^\omega,
\qquad s,t\ge0.
\]
It expresses the compatibility between restarting a solution
and shifting the underlying environment.
For RDEs, this compatibility must hold for the enhanced driving
signal, not merely for its first-level path.
Bailleul, Riedel, and Scheutzow~\cite{BRS2017} established
sufficient conditions under which stochastic rough path lifts
satisfy the cocycle property and the corresponding RDEs
generate RDS.
Their results include suitable lifts of fractional Brownian
motion.
This pathwise construction makes dynamical methods available
without requiring the state process itself to possess
the Markov property.

Local random dynamics are naturally studied along a stationary
trajectory.
A stationary point $Y_\omega$ satisfies
$
\varphi_t^\omega(Y_\omega)=Y_{\theta_t\omega}.
$
When the state maps are differentiable, their derivatives
along this trajectory form a linear cocycle.
Under the hypotheses of the multiplicative ergodic theorem, the linearized dynamics admit Lyapunov exponents and
an Oseledets decomposition~\cite{Oseledets1968}.
The connection between these growth rates and nonlinear
invariant manifolds is a fundamental theme of smooth ergodic
theory~\cite{Ruelle1979}.
In the stochastic setting, Mohammed and
Scheutzow~\cite{MS1999} established local stable and unstable
manifolds near hyperbolic stationary trajectories of
stochastic differential equations.
For RDE-generated systems, Ghani Varzaneh and
Riedel~\cite{GhaniVarzanehRiedel2025} proved local stable,
unstable, and center manifold results around random fixed
points, together with exponential stability when the largest
Lyapunov exponent is negative.
Their work shows how rough path estimates can supply the
analytic inputs needed for local random dynamical arguments.
The abstract results on Oseledets splittings and invariant
manifolds in~\cite{GVR2023}, and the center manifold theorem
in~\cite{center2025}, provide related tools in the setting
of measurable fields of Banach spaces.
These developments suggest a corresponding program for CRDEs:
starting from a random reference rough path and a random
controlled driver, construct a solution cocycle and determine
the local geometry near a stationary trajectory.
The issue is not simply whether each CRDE can be written as
an RDE, but whether the controlled data supply the compatibility
and quantitative estimates required by this dynamical theory.

\subsection{Challenges and contributions in CRDE dynamics}
\label{subsec:Diff}

The passage from RDE dynamics to CRDE dynamics involves two
distinct steps.
One must first assemble the pathwise solutions into a cocycle,
and then control its linear and nonlinear behavior over
arbitrarily many time intervals.
The canonical lift connects the equations at the level
of individual trajectories, but this connection alone does
not verify the hypotheses needed for either step.
Our contribution is to develop these additional inputs in
terms of the original controlled data.

The first requirement is compatibility with time shifts.
Being controlled by ${\bf X}$ is a pathwise regularity
condition; it does not imply that ${\bf Z}$ has the cocycle
structure required for random dynamics.
In particular, a nonlinear transformation or an upstream
solution need not inherit the necessary shift identities
automatically.
Definition~\ref{def:prelim-rough-path-cocycle} specifies the
compatibility conditions for both the increments of $Z$
and its Gubinelli derivative:
\[
Z_{s,s+t}(\omega)
=
Z_{0,t}(\theta_s\omega),
\qquad
Z'_{s+t}(\omega)
=
Z'_t(\theta_s\omega).
\]
Together with the rough path cocycle identities for
${\bf X}$, these conditions also control the shift of the
compensated terms defining the rough integral.

Lemma~\ref{lemma-def-tilde-Y} establishes the resulting shift
identity for the controlled integral and the drift integral.
This identity and pathwise uniqueness lead to the solution
cocycle in Theorem~\ref{thm:solution-cocycle}.
The continuity of the finite-time solution construction
provides the route from measurable driving data to measurable
state maps.
Under the stronger smoothness assumptions used for
differentiation,
Proposition~\ref{thm:solution-Ck-regularity} identifies the
variational equation, and
Corollary~\ref{cor:solution-Ck-cocycle} gives the corresponding
$C^k$-cocycle.
These results make the restart structure of a CRDE compatible
with the underlying random environment.

The second requirement is quantitative control of the
linearized evolution.
Continuity on a fixed interval does not provide the
logarithmic integrability required by the multiplicative ergodic theorem.
Moreover, the effective driver $Z$ need not retain the
probabilistic structure of the original noise.
For instance, Gaussian rough path estimates cannot be
applied to an arbitrary controlled output merely because
the reference signal is Gaussian.
The importance of this distinction is illustrated by the
Jacobian integrability and tail estimates for Gaussian
RDEs obtained in~\cite{CLL2013}.

We therefore estimate the derivative and its inverse directly
in terms of the reference rough path, the controlled driver,
and the initial state.
The deterministic bounds in
Propositions~\ref{prop:lin-deterministic-derivative-bound}
and~\ref{prop:lin-deterministic-inverse-bound} are used,
under the stated moment assumptions, to obtain the
logarithmic integrability of the linearized cocycle in
Corollary~\ref{cor:lin-local-linear-control-integrable}.
The local theory is developed around a prescribed stationary
point with the required integrability.

The third requirement concerns the nonlinear remainder.
The Lyapunov exponents describe only the linearized equation;
invariant manifolds require the nonlinear terms to remain
sufficiently small relative to this linear behavior.
For a fixed time step $t_0>0$, consider
\[
P_\omega(z)
=
\phi_{t_0}^\omega(Y_\omega+z)
-
\phi_{t_0}^\omega(Y_\omega)
-
D_{Y_\omega}\phi_{t_0}^\omega[z].
\]
The relevant estimate is a local bound of the form
\[
\|P_\omega(z_2)-P_\omega(z_1)\|
\le
f_{\mathrm{nl}}(\omega)
(\|z_1\|+\|z_2\|)
\|z_2-z_1\|.
\]
Finiteness of the random coefficient is not enough:
its growth under repeated shifts must also be controlled.
Proposition~\ref{prop:manifold-derivative-lipschitz}
estimates the local variation of the derivative, while
Proposition~\ref{lem:manifold-fnl-tempered} provides the
temperedness used in the nonlinear remainder argument.
This is the quantitative connection between the finite-time
controlled calculus and the local invariant manifold theory.

Within the applicable multiplicative ergodic theorem setting,
Lemma~\ref{lemma-29} supplies the Oseledets decomposition
for the linearized cocycle.
Theorems~\ref{thm:manifold-local-stable}
and~\ref{thm:manifold-local-unstable} then describe the local
nonlinear geometry associated with the negative and positive
Lyapunov directions.
The stable manifold consists of nearby initial states whose
forward trajectories track the stationary trajectory,
whereas the unstable manifold is characterized by backward
orbits approaching that trajectory in the past.
Theorem~\ref{thm:manifold-local-center} treats the zero
Lyapunov directions through a localized cocycle.
The abstract manifold results are those
of~\cite{GVR2023,center2025}; the work specific to CRDEs
is the verification of their analytic inputs from the
controlled-driver structure.

Finally, the manifold construction is initially performed
for the time-$t_0$ cocycle.
Behavior at the sampling times alone does not control
the evolution between them.
Proposition~\ref{prop:manifold-tempered-finite-lipschitz}
provides a tempered finite-time Lipschitz bound on
$[0,t_0]$, which is the key input for the continuous-time
analysis in
Subsection~\ref{subsec:manifold-continuous-time}.
The need for this estimate is distinct from the
finite-time continuity of the solution map: it controls
the random amplification factors encountered when
successive time intervals are concatenated.

The significance of the resulting framework is therefore
structural and quantitative.
It connects local random dynamical theory with models whose
effective driver is specified through a controlled expansion,
rather than through an independently prescribed rough path
lift.
The equivalence with classical RDEs is used as an analytic
tool, while the reference signal, the controlled derivative,
and the remainder remain visible in the estimates.
This makes it possible to formulate local dynamical
questions directly in the variables supplied by a layered
rough model.

\smallskip\noindent
{\bf Outline of the paper.}
Section~\ref{sec:preli} recalls rough paths, controlled paths,
and RDS, and specifies the controlled rough path cocycle
structure in
Definition~\ref{def:prelim-rough-path-cocycle}.
Section~\ref{sec:crde-theory} establishes the equivalence
with classical RDEs in
Theorem~\ref{thm:solution-equivalence}, the finite-time
well-posedness and continuity result in
Theorem~\ref{thm:solution-pathwise-wellposedness}, and
the solution cocycle in
Theorem~\ref{thm:solution-cocycle}.
Section~\ref{sec:stationary-points} develops the derivative
and inverse estimates in
Propositions~\ref{prop:lin-deterministic-derivative-bound}
and~\ref{prop:lin-deterministic-inverse-bound}, together
with the tempered nonlinear control in
Proposition~\ref{lem:manifold-fnl-tempered}.
Section~\ref{sec:local-invariant-manifolds} presents the
Oseledets decomposition in Lemma~\ref{lemma-29} and the
local stable, unstable, and center manifold results in
Theorems~\ref{thm:manifold-local-stable},
\ref{thm:manifold-local-unstable},
and~\ref{thm:manifold-local-center}.
It concludes with the finite-time Lipschitz estimate in
Proposition~\ref{prop:manifold-tempered-finite-lipschitz}
and the continuous-time analysis in
Corollary~\ref{cor:manifold-continuous-time-stable}
and Theorem~\ref{thm-continu}.

\smallskip\noindent
\textbf{Notation.}
We use $\mathbb N$, $\mathbb N_0:=\{0\}\cup\mathbb N$, and $\mathbb R$
for the sets of positive integers, nonnegative integers, and real numbers,
respectively. All spaces are Banach spaces, and $\mathcal L(V,W)$ denotes
the space of bounded linear maps from $V$ to $W$. When no confusion can
arise, we write $\|\cdot\|$ for the relevant Banach-space norm. For
$k\in\mathbb N_0$, let $C^k(V,W)$ denote the space of $k$-times
continuously differentiable maps from $V$ to $W$, and let
$C_b^k(V,W)$ be the subspace whose derivatives up to order $k$ are
bounded. 
For a continuous path $X:[0,T]\to V$, we denote its increment over
$[s,t]$ by $X_{s,t}:=X_t-X_s$, and set
$
\dt{0,T}:=\{(s,t):0\le s\le t\le T\}.
$

\section{Preliminaries}\mlabel{sec:preli}
In this section, we recall the basic notions of rough paths, controlled rough paths, and random dynamical systems needed in the sequel. We then combine these two frameworks through rough path cocycles and controlled rough path cocycles, which will be used to construct the solution cocycle for the drift-containing CRDE.

\subsection{Rough paths and controlled rough paths}\mlabel{subsec:prelim-rough-paths}
We begin by recalling the level-two $\alpha$-H\"older rough path framework and the associated notion of controlled rough paths for $\alpha\in(1/3,1/2]$, together with the basic norms and remainder structure that will be used throughout the paper.

\begin{definition}\label{def:prelim-level-two-rough-path}~\cite{BG2022,FH20,GhaniVarzanehRiedel2025}
Let $(V, \|\cdot \|)$ be a finite-dimensional Banach space and let $\alpha\in(1/3,1/2]$.
An {\bf $\alpha$-H\"older rough path} over $V$ on $[0,T]$ is a pair $\mathbf X=(X,\mathbb X)$, where
\[
X:[0,T] \to V,
\qquad
\mathbb X: \dt{0,T} \to V\otimes V,
\]
such that
\begin{enumerate}
\item $X$ is $\alpha$-H\"older continuous, and $\mathbb X$ is $2\alpha$-H\"older continuous, that is
\[\|X\|_{\alpha,[0,T]}:=\sup_{s\neq t\in [0,T]}\frac{\|X_{s,t}\|}{|t-s|^{\alpha}}<\infty,\qquad \|\mathbb X\|_{2\alpha,[0,T]}:=\sup_{s\neq t\in [0,T]}\frac{\|\mathbb X_{s,t}\|}{|t-s|^{2\alpha}}<\infty;\]
\item Chen's relation holds:
\begin{equation*}
\mathbb X_{s,t}
=
\mathbb X_{s,u}
+
\mathbb X_{u,t}
+
X_{s,u}\otimes X_{u,t},
\qquad
0\le s\le u\le t\le T.
\end{equation*}
\end{enumerate}
\end{definition}

We denote the space of all such $\alpha$-H\"older rough paths by
$\mathcal D^\alpha([0,T],V)$. Following~\mcite{GhaniVarzanehRiedel2025}, we equip this space with the rough path norm
\begin{equation*}
\|\mathbf X\|_{\alpha,[0,T]}
:=\max\Big\{
\|X\|_{\alpha,[0,T]},
\|\mathbb X\|_{2\alpha,[0,T]}^{1/2} \Big\}.
\end{equation*}

\begin{definition}\label{def-controlledRP}~\cite{BG2022,FH20, GhaniVarzanehRiedel2025}
Let  $(W, \|\cdot \|)$ be a finite-dimensional Banach space, and let $\mathbf X=(X,\mathbb X)\in \mathcal D^\alpha([0,T],V)$. A pair $\mathbf Y=(Y,Y')$ is called an {\bf ${\bf X}$-controlled rough path} with 
\[
Y: [0,T]\to W,
\qquad
Y': [0,T]\to \mathcal L(V,W),
\]
if 
\[
\|R^{Y}\|_{2\alpha,[0,T]}
:=
\sup_{0\le s<t\le T}\frac{\|R^{Y}_{s,t}\|}{|t-s|^{2\alpha}}<\infty,  \qquad  \|Y'\|_{\alpha,[0,T]}
:=
\sup_{0\le s<t\le T}\frac{\|Y'_{s,t}\|}{|t-s|^{\alpha}}<\infty.
\]
Here the remainder $R^{Y}:\Delta_{[0,T]} \rightarrow W$ is given by 
\begin{equation}
	R^{Y}_{s,t}:=Y_{s,t}-Y'_sX_{s,t}, \qquad \forall  0\le s< t\le T. \mlabel{eq:remainder}
\end{equation}
\end{definition}

We denote the space of all such ${\bf X}$-controlled rough paths by
$\mathcal C_{\bf X}^\alpha([0,T],W)$. Following~\cite{GhaniVarzanehRiedel2025}, we equip this space with the norm
\[
\|\mathbf Y\|_{\mathcal{C}_{\bf X}^\alpha([0,T],W)}
:=
\max\Big\{
\|Y\|_{\infty,[0,T]},
\|Y'\|_{\infty,[0,T]},
\|R^{Y}\|_{2\alpha,[0,T]},
\|Y'\|_{\alpha,[0,T]}
\Big\},
\]
for $\mathbf Y=(Y,Y')\in\mathcal C_{\bf X}^\alpha([0,T],W)$.
When the state space $W$ is clear from the context, we simply write
$
\|\cdot\|_{\mathcal C_{\bf X}^\alpha([0,T])}
$
instead of
$
\|\cdot\|_{\mathcal C_{\bf X}^\alpha([0,T],W)}.
$

For continuity estimates, it is also important to quantify the distance between controlled rough paths defined with respect to different driving rough 
paths. Let 
$$\mathbf X,\,\widetilde{\bf X}\in \mathcal D^\alpha ([0,T],V), \qquad \mathbf Y=(Y,Y')\in\mathcal C_{\bf X}^\alpha([0,T],W), \qquad \widetilde{\mathbf Y}=(\widetilde{Y},\widetilde{Y'})\in\mathcal C_{\widetilde{\bf X}}^\alpha([0,T],W).$$ 
Define the distance~\cite[p.~7]{BG2022}
$$
d_{\mathbf X,\widetilde{\mathbf X};\alpha}(\mathbf Y,\widetilde{\mathbf{Y}}):=\|R^Y-R^{\widetilde{Y}}\|_{2\alpha,[0,T]}+\|Y'-\widetilde{Y'}\|_{\alpha,[0,T]}.
$$
We will also use the controlled rough path framework to formulate rough differential equations with drift. In particular, consider
\begin{align}
dY_t
=
F(Y_t)\,d{\bf X}_t
+
F_0(Y_t)\,dt,
\mlabel{eq-diff-RDE}
\end{align}
with initial value $Y_0\in W$, where
$F:W\to\mathcal L(V,W)$ and $F_0:W\to W$.
Its integral formulation is
\begin{align*}
Y_t
=
Y_0
+
\sint_0^t F(Y_r)\,d{\bf X}_r
+
\sint_0^t F_0(Y_r)\,dr,
\end{align*}
and the equation is solved in the space of ${\bf X}$-controlled rough paths.

\subsection{Random dynamical systems}
We next recall the basic framework of random dynamical systems that will be used to describe the long-time dynamics of the solutions, with particular emphasis on measurable cocycles, stationary points, and their linearizations.

\begin{definition}~\cite{Walter1982}
\label{def:prelim-metric-dynamical-system}
An {\bf invertible measure-preserving dynamical system} is a quadruple $(\Omega,\mathcal F,\\\mathbb P,\{\theta_t\}_{t\in\mathbb R})$, where $(\Omega,\mathcal F,\mathbb P)$ is a probability space and $\{\theta_t\}_{t\in\mathbb R}$ is a family of measurable maps $\theta_t:\Omega\to \Omega$ satisfying

\begin{enumerate}
\item $\theta_0=\mathrm{id}_{\Omega}$;

\item $\theta_{t+s}=\theta_t\circ\theta_s$ for all $s,t\in\mathbb R$;

\item the map $\theta : \mathbb{R}\times \Omega\to \Omega, (t,\omega)\mapsto\theta_t\omega$ is $(\mathcal B(\mathbb R)\otimes\mathcal F,\mathcal F)$-measurable; 

\item $\mathbb P\circ\theta_t^{-1}=\mathbb P$ for all $t\in\mathbb R$.
\end{enumerate}
Moreover, if every $\theta_t$ with $t\ne0$ is ergodic, the system is called {\bf ergodic}.
\end{definition}

\begin{definition}\label{def:prelim-measurable-cocycle}~\cite{BRS2017,Mackey1966}
Let $E$ be a finite-dimensional Banach space.
\begin{enumerate}
\item~\label{it-def:prelim-measurable-cocycle} A measurable map
\begin{equation}
\varphi:\mathbb R \times\Omega\times E\to E
\mlabel{eq:mecoc0}
\end{equation}
is called a {\bf measurable cocycle} over $\{\theta_t\}_{t\in\mathbb{R}}$ if
\begin{equation}
\varphi(t+s,\omega,x)
= \varphi\bigl(t,\theta_s\omega,\varphi(s,\omega,x)\bigr), \qquad \forall s,t\in \mathbb{R},\,\omega\in \Omega,\,x\in E.
\mlabel{eq:mecoc}
\end{equation}

\item~\label{it-def:prelim-Ck-cocycle} Moreover, for fixed nonnegative integer $k$ and $(t,\omega)\in \mathbb{R}\times \Omega$, if $$\varphi(t,\omega,\cdot):E\to E, \qquad x\mapsto \varphi(t,\omega,x)$$ belongs to $C^k(E,E)$, then $\varphi$ is called a {\bf $C^k$-cocycle}. 

\item A measurable cocycle $\psi$ is called a {\bf linear cocycle on $E$} if $\psi(t,\omega,\cdot)\in \mathcal L(E,E) $ for every $(t,\omega)\in \mathbb{R}\times \Omega$. 
\end{enumerate}
\end{definition}

The measurable cocycle in~\meqref{eq:mecoc0} induces a map
\[
\varphi_t^\omega: E\rightarrow E, \qquad  x\mapsto \varphi(t,\omega, x),
\]
and the measurable cocycle property~\meqref{eq:mecoc} becomes
$
\varphi_{t+s}^\omega
=
\varphi_t^{\theta_s\omega}\circ\varphi_s^\omega.
$

\begin{definition}~\cite{Arnold1998}
\label{def:prelim-stationary-point}
Let $\varphi:\mathbb R \times\Omega\times E\to E$ be a measurable cocycle. 
A measurable map $$Y:\Omega\to E,\qquad \omega\mapsto Y_\omega$$ is called a {\bf stationary point} of the cocycle $\varphi$ if
\begin{equation}~\label{eq:stationary}
\varphi(t,\omega,Y_\omega)=Y_{\theta_t\omega},
\qquad
\forall t\ge0,\,\omega\in\Omega.
\end{equation}
\end{definition}

\begin{remark}~\cite{GhaniVarzanehRiedel2025}
Let
$
\varphi:\mathbb R\times\Omega\times E\to E
$
be a $C^1$-cocycle, and let $Y:\Omega\to E$ be a stationary point of $\varphi$.
Define
\[
\psi:\mathbb R\times\Omega\times E\to E,
\qquad
(t,\omega,x)\mapsto
\psi(t,\omega,x)
:=
D_{Y_\omega}\varphi(t,\omega,\cdot)[x].
\]
Then $\psi$ is a linear cocycle, where
$D_{Y_\omega}\varphi(t,\omega,\cdot)\in\mathcal L(E,E)$
denotes the Fr\'echet derivative of the map
$x\mapsto\varphi(t,\omega,x)$ at $Y_\omega$.
\end{remark}

\subsection{Rough path cocycles and controlled rough path cocycles}
We now combine the rough path framework with the random dynamical systems viewpoint by recalling rough path cocycles and introducing controlled rough path cocycles. We also record the corresponding shift property, which will be used later in constructing the solution cocycle.

\begin{definition} \label{def:prelim-rough-path-cocycle}
Let $\alpha\in(1/3,1/2]$, and let $(\Omega,\mathcal F,\mathbb P,\{\theta_t\}_{t\in\mathbb R})$ be an invertible measure-preserving dynamical system. 
\begin{enumerate}
\item~\cite{BRS2017} 
A (random) process $\mathbf X=(X,\mathbb X)$ with 
$$X:\mathbb R\times \Omega\to V,\qquad \mathbb X:\mathbb R\times \mathbb R\times \Omega\to V\otimes V$$ 
is called a (random) {\bf rough path cocycle} if  $$\mathbf X(\omega)= (X(\omega),\mathbb X(\omega))\in\mathcal{D}^{\alpha}(\mathbb R,V), \qquad \forall \omega\in \Omega,$$   
and if the following cocycle relation holds:
\begin{align}~\label{eq:cocycle-relation}
X_{s,s+t}(\omega)=X_{0,t}(\theta_s\omega),
\qquad
\mathbb X_{s,s+t}(\omega)=\mathbb X_{0,t}(\theta_s\omega), \qquad \forall s\in\mathbb{R},\,t\in \mathbb{R}_{\ge 0}.
\end{align}

\item Further, a (random) process $\mathbf Y=(Y,Y')$ with 
$$Y:\mathbb R\times \Omega\to W,\qquad Y':\mathbb R\times \Omega\to \mathcal L(V,W)$$
is called a (random) {\bf $\mathbf X$-controlled rough path cocycle} if 
$$\mathbf Y(\omega) = (Y(\omega), Y'(\omega))\in \mathcal C_{\mathbf X(\omega)}^\alpha(\mathbb R,W), \qquad \forall \omega\in \Omega$$ and if the following cocycle relation holds:
\begin{equation}
Y_{s,s+t}(\omega)
=
Y_{0,t}(\theta_s\omega),
 \qquad
Y'_{s+t}(\omega)
=
Y'_t(\theta_s\omega), \qquad \forall s\in\mathbb R,\,t\in\mathbb R_{\ge 0},\,\omega\in \Omega
~\mlabel{shift-eq}
\end{equation}
\end{enumerate}
\end{definition}

Now we study the shift behavior of the remainder term.

\begin{proposition}
\label{prop:lift-remainder-shift}
Let $\mathbf Z =(Z, Z')$ be an $\mathbf X$-controlled rough path cocycle. Then the remainder given in~\meqref{eq:remainder} satisfies 
\begin{equation*} 
R^Z_{s,s+t}(\omega)=R^Z_{0,t}(\theta_s\omega), \qquad \forall s\in\mathbb R,\,t\in\mathbb R_{\ge 0}.
\end{equation*}
\end{proposition}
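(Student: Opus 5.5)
The argument is a direct computation from the definition of the remainder in~\eqref{eq:remainder}, combined with the cocycle relations~\eqref{eq:cocycle-relation} for $\mathbf X$ and~\eqref{shift-eq} for $\mathbf Z$. Fix $\omega\in\Omega$, $s\in\mathbb R$ and $t\ge0$. By definition,
\[
R^Z_{s,s+t}(\omega)=Z_{s,s+t}(\omega)-Z'_s(\omega)X_{s,s+t}(\omega),
\qquad
R^Z_{0,t}(\theta_s\omega)=Z_{0,t}(\theta_s\omega)-Z'_0(\theta_s\omega)X_{0,t}(\theta_s\omega).
\]
We therefore match the two expressions term by term.

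First, the increment term. The first identity in~\eqref{shift-eq} gives $Z_{s,s+t}(\omega)=Z_{0,t}(\theta_s\omega)$ directly.

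Second, the Gubinelli derivative. Apply the second identity in~\eqref{shift-eq} with time argument $0$ in place of $t$; this is allowed since $0\in\mathbb R_{\ge0}$. It yields $Z'_s(\omega)=Z'_{0}(\theta_s\omega)$.

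Third, the first-level increment of the reference path. The first relation in~\eqref{eq:cocycle-relation} gives $X_{s,s+t}(\omega)=X_{0,t}(\theta_s\omega)$.

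Substituting these three identities into the expression for $R^Z_{s,s+t}(\omega)$ produces exactly $R^Z_{0,t}(\theta_s\omega)$. The case $t=0$ is trivial, since both sides vanish.

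There is no genuine obstacle here. The only point needing care is that the derivative identity is used at time $0$, where it is a statement about the value $Z'_s$ itself rather than an increment. It is the requirement on $Z'_{s+t}$ itself, not just on increments of $Z'$, that makes this step possible. The second-level component $\mathbb X$ plays no role, because the remainder involves only first-level data.
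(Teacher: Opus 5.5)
Your proposal is correct and follows the paper's proof: you substitute the three shift identities $Z_{s,s+t}(\omega)=Z_{0,t}(\theta_s\omega)$, $Z'_s(\omega)=Z'_0(\theta_s\omega)$ (the derivative relation at time $0$) and $X_{s,s+t}(\omega)=X_{0,t}(\theta_s\omega)$ into the definition of $R^Z$. You also correctly note that this step depends on the pointwise shift condition on $Z'$, not just on its increments.
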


\begin{proof}
Using the shift relations for $Z$, $Z'$  and $X$,  
\[
Z_{s,s+t}(\omega)
=
Z_{0,t}(\theta_s\omega),\quad
Z'_s(\omega)
=
Z'_0(\theta_s\omega),
\quad
X_{s,s+t}(\omega)
=
X_{0,t}(\theta_s\omega),
\]
which implies that   
\begin{align*}
R^Z_{s,s+t}(\omega)=& \ Z_{s,s+t}(\omega)-Z_{s}'(\omega)X_{s,s+t}(\omega) 
= Z_{0,t}(\theta_{s}\omega)-Z_{0}'(\theta_{s}\omega)X_{0,t}(\theta_{s}\omega) 
=  R_{0,t}^{Z}(\theta_{s}\omega).
\end{align*}
This completes the proof.
\end{proof}

Next, we consider the cocycle property in the following special case.

\begin{proposition}
Let $(\Omega,\mathcal F,\mathbb P,\{\theta_t\}_{t\in\mathbb R})$ be an invertible measure-preserving dynamical system, and let $\mathbf X$ be a rough path cocycle. If $$\mathbf Z(\omega)=(Z(\omega):=X(\omega),Z'(\omega):=\mathrm{Id}_{V})\in\mathcal C_{\bf X(\omega)}^\alpha([0,T],U)\,\text{ for each }\, \omega\in \Omega,$$ then $\mathbf Z$ is an ${\bf X}$-controlled rough path cocycle.
\end{proposition}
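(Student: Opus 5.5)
The plan is to check the two requirements of Definition~\ref{def:prelim-rough-path-cocycle}(2) directly: the membership condition and the two shift identities in~\eqref{shift-eq}.

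\emph{Membership.} For each $\omega\in\Omega$, the pair $\mathbf Z(\omega)=(X(\omega),\mathrm{Id}_V)$ lies in $\mathcal C^\alpha_{\mathbf X(\omega)}$ by hypothesis. This can also be confirmed directly. The remainder is
\[
R^Z_{s,t}=X_{s,t}-\mathrm{Id}_V X_{s,t}=0,
\]
and the Gubinelli derivative $Z'=\mathrm{Id}_V$ is constant, so $\|Z'\|_{\alpha}=0$. Both are trivially finite on every compact interval, and hence on $\mathbb R$ in the local sense used for $\mathbf X(\omega)\in\mathcal D^\alpha(\mathbb R,V)$. Here we identify $U$ with $V$, as in the RDE reduction.

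\emph{Shift identity for $Z$.} Since $Z=X$, the first identity in~\eqref{shift-eq} reads
\[
Z_{s,s+t}(\omega)=X_{s,s+t}(\omega)=X_{0,t}(\theta_s\omega)=Z_{0,t}(\theta_s\omega).
\]
This is exactly the first-level rough path cocycle relation~\eqref{eq:cocycle-relation}.

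\emph{Shift identity for $Z'$.} Because $Z'$ is identically $\mathrm{Id}_V$, independent of both time and $\omega$, we have
\[
Z'_{s+t}(\omega)=\mathrm{Id}_V=Z'_t(\theta_s\omega).
\]

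There is no genuine obstacle; the argument is bookkeeping. The only point needing care is the domain mismatch: the hypothesis is stated on $[0,T]$, while the definition of a controlled cocycle asks for paths on $\mathbb R$. We handle this by noting that the verification above is pointwise in $(s,t)$ and valid for every $s\in\mathbb R$, $t\ge0$, because the rough path cocycle $\mathbf X$ is defined on all of $\mathbb R$.

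As a consistency check, Proposition~\ref{prop:lift-remainder-shift} then gives $R^Z_{s,s+t}(\omega)=R^Z_{0,t}(\theta_s\omega)$, and both sides are indeed $0$.
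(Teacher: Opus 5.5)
Your proposal is correct and matches the paper's proof: both verify the two identities in \eqref{shift-eq} directly, the first from $Z=X$ and the first-level cocycle relation \eqref{eq:cocycle-relation}, and the second from $Z'\equiv\mathrm{Id}_V$. Your extra check that $R^Z\equiv 0$ and $Z'$ is constant handles the $[0,T]$ versus $\mathbb R$ domain mismatch, which the paper passes over, so it is a slightly more careful version of the same argument.
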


\begin{proof}
By Definition~\ref{def:prelim-rough-path-cocycle}, it remains only to verify the shift relations in~\eqref{shift-eq}.
Since ${\bf X}$ is a rough path cocycle,
\[
X_{s,s+t}(\omega)=X_{0,t}(\theta_s\omega),
\qquad
\forall s\in\mathbb R,\ t\in\mathbb R_{\ge0},\ \omega\in\Omega.
\]
Because $Z(\omega)=X(\omega)$, we immediately obtain
\[
Z_{s,s+t}(\omega)=Z_{0,t}(\theta_s\omega).
\]
Moreover, since  
\[
Z'_{s+t}(\omega)
=
\mathrm{Id}_V
=
Z'_t(\theta_s\omega),
\]
${\bf Z}$ satisfies~\eqref{shift-eq} and is therefore an ${\bf X}$-controlled rough path cocycle.
\end{proof}

\section{Rough differential equations driven by controlled rough paths}\mlabel{sec:crde-theory}
In this section, we study drift-containing RDEs driven by controlled rough paths. More precisely, we consider
\begin{equation}\label{main-diff-CRDE}
dY_t
=
F(Y_t)\,d{\bf Z}_t
+
F_0(Y_t)\,dt,
\end{equation}
where
\[
Y:[0,T]\to W, \qquad F:W\to\mathcal L(U,W),
\qquad
F_0:W\to W,
\]
and
\[
{\bf Z}=(Z,Z')
\in
\mathcal C_{\bf X}^\alpha([0,T],U)
\]
is controlled by
${\bf X}\in\mathcal D^\alpha([0,T],V)$.
We first relate this equation to a classical RDE driven by the canonical rough path lift of $Z$, and then use this relation to establish its pathwise well-posedness and the associated solution cocycle.

\subsection{Relation to classical rough differential equations} 
In this subsection, we relate the drift-containing CRDE~\eqref{main-diff-CRDE} to a classical RDE through the canonical rough path lift of the controlled driver $Z$. This relation will be used to establish pathwise well-posedness and continuity of the solution.
Let us first recall the concept of the controlled-against-controlled rough integral.

\begin{lemma}~\cite[Theorem~1]{Gubinelli04}
\label{thm:prelim-controlled-integral}
Let $\mathbf X=(X,\mathbb X) \in \mathcal{D}^\alpha([0,T],V)$ and
\[
{\bf Y}=(Y,Y')\in \mathcal{C}^\alpha_{\mathbf X}([0,T],\mathcal L(U,W)),
\qquad
{\bf Z}=(Z,Z')\in \mathcal{C}^\alpha_{\mathbf X}([0,T],U).
\]
Then the {\bf rough integral}
\begin{equation}\label{eq:prelim-controlled-integral}
\sint_s^t Y_r\,d{\bf Z}_r
:=
\lim_{|\mathcal P|\to 0}
\sum_{[u,v]\in\mathcal P}
\left(
Y_uZ_{u,v}
+
Y'_uZ'_u\mathbb X_{u,v}
\right) \in W 
\end{equation}
is well defined for every $s<t$, where $\mathcal P$ is an arbitrary partition of $[s,t]$. Moreover, there is a constant $C_\alpha$ such that
\begin{align*}
\left\| \sint_s^t Y_r\,d{\bf Z}_r-Y_sZ_{s,t}
-Y'_sZ'_s\mathbb X_{s,t}\right\|
\le
C_\alpha
\mathcal N_{\mathbf X}(Y,Z)
|t-s|^{3\alpha},
\end{align*}
where $\mathcal N_{\mathbf X}(Y,Z)$ is an increasing polynomial depending on $\|\mathbf X\|_{\alpha,[0,T]}$, 
$\|{\bf Y}\|_{\mathcal{C}_{\bf X}^\alpha([0,T])}$, and
$\|{\bf Z}\|_{\mathcal{C}_{\bf X}^\alpha([0,T])}$.
\end{lemma}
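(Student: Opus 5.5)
The plan is the sewing lemma. Set
\[
\Xi_{u,v}:=Y_uZ_{u,v}+Y'_uZ'_u\mathbb X_{u,v},\qquad (u,v)\in\Delta_{[0,T]}.
\]
The aim is to show that the defect $\delta\Xi_{u,m,v}:=\Xi_{u,v}-\Xi_{u,m}-\Xi_{m,v}$ satisfies $\|\delta\Xi_{u,m,v}\|\lesssim |v-u|^{3\alpha}$ for $u\le m\le v$. Since $3\alpha>1$, the sewing lemma then gives three things at once: the limit of the Riemann sums in~\eqref{eq:prelim-controlled-integral} exists independently of the partition, the resulting integral is additive, and it satisfies the local bound $\|\sint_u^v Y\,d{\bf Z}-\Xi_{u,v}\|\le C_\alpha\sup\|\delta\Xi\|/|v-u|^{3\alpha}\cdot|v-u|^{3\alpha}$ with $C_\alpha=(1-2^{1-3\alpha})^{-1}$ (or a similar constant).

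\textbf{Computing the defect.} Use Chen's relation $\mathbb X_{u,v}=\mathbb X_{u,m}+\mathbb X_{m,v}+X_{u,m}\otimes X_{m,v}$ and additivity of increments of $Z$. This gives
\[
\delta\Xi_{u,m,v}=-Y_{u,m}Z_{m,v}+Y'_uZ'_u\,(X_{u,m}\otimes X_{m,v})-(Y'Z')_{u,m}\mathbb X_{m,v}.
\]
Now expand $Y_{u,m}=Y'_uX_{u,m}+R^Y_{u,m}$ and $Z_{m,v}=Z'_mX_{m,v}+R^Z_{m,v}$. The leading product $Y'_uX_{u,m}Z'_mX_{m,v}$ nearly cancels against $Y'_uZ'_u(X_{u,m}\otimes X_{m,v})$. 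The tensor identity needed here is the contraction convention $(Y'_uZ'_u)(a\otimes b)=Y'_u[a]\big(Z'_u b\big)$, where $Y'_u\in\mathcal L(V,\mathcal L(U,W))$. What survives is $-Y'_uX_{u,m}Z'_{u,m}X_{m,v}$, which is of order $|v-u|^{3\alpha}$ because $\|Z'\|_\alpha<\infty$.

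\textbf{Remaining terms.} The other pieces are bounded as follows.
\begin{itemize}
\item $R^Y_{u,m}Z_{m,v}$ is bounded by $\|R^Y\|_{2\alpha}\|Z\|_\alpha$ times $|v-u|^{3\alpha}$.
\item $Y'_uX_{u,m}R^Z_{m,v}$ is bounded by $\|Y'\|_\infty\|X\|_\alpha\|R^Z\|_{2\alpha}$ times $|v-u|^{3\alpha}$.
\item $(Y'Z')_{u,m}\mathbb X_{m,v}$ is bounded by $\big(\|Y'\|_\alpha\|Z'\|_\infty+\|Y'\|_\infty\|Z'\|_\alpha\big)\|\mathbb X\|_{2\alpha}$ times $|v-u|^{3\alpha}$.
\end{itemize}
The factor $\|Z\|_\alpha$ in the first bullet is itself controlled by $\|Z'\|_\infty\|X\|_\alpha+\|R^Z\|_{2\alpha}T^\alpha$. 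Collecting the bounds yields $\|\delta\Xi\|\le \mathcal N_{\mathbf X}(Y,Z)|v-u|^{3\alpha}$. Here $\mathcal N_{\mathbf X}$ is a polynomial, increasing in $\|\mathbf X\|_{\alpha}$ and in the two controlled norms, and it may absorb factors such as $1+T^\alpha$.

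\textbf{Main obstacle.} The step most likely to cause trouble is the bookkeeping of the contraction $Y'_uZ'_u\mathbb X_{u,v}$. The composite must be read as a bilinear map $V\otimes V\to W$ so that the near-cancellation of the leading term is exact. Once this identification is fixed, the estimates are routine, and the sewing lemma (a dyadic point-removal argument) delivers both existence and the stated inequality.
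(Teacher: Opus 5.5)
Your proposal is correct. The defect identity $\delta\Xi_{u,m,v}=-Y_{u,m}Z_{m,v}+Y'_uZ'_u(X_{u,m}\otimes X_{m,v})-(Y'Z')_{u,m}\mathbb X_{m,v}$, the near-cancellation leaving $-Y'_uX_{u,m}Z'_{u,m}X_{m,v}$ under the contraction $(Y'_uZ'_u)(a\otimes b)=Y'_u[a](Z'_ub)$, and the sewing-lemma conclusion together form the standard argument. The paper gives no proof of its own and simply cites Gubinelli's Theorem~1, which rests on this same sewing-map construction; the $T^\alpha$ factors you absorb into $\mathcal N_{\mathbf X}$ are harmless, since the paper accepts $T$-dependent constants elsewhere.
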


With the controlled-against-controlled rough integral at hand, we can now give a precise definition of a solution to the drift-containing CRDE~\eqref{main-diff-CRDE}.

\begin{definition}
We say that $\mathbf Y=(Y,Y')\in \mathcal{C}_{\bf X}^\alpha([0,T],W)$ is a {\bf solution of the drift-containing CRDE~\eqref{main-diff-CRDE}} with initial value $Y_0\in W$ if
\begin{equation}\label{main-integ-CRDE}
Y_t =Y_{0}+\sint_0^t F(Y_r)\,d{\bf Z}_r+\sint_0^t F_0(Y_r)\,dr,\qquad Y'_t=F(Y_t)Z'_t,\qquad \forall t\in[0,T].
\end{equation}
\mlabel{defn:sol}
\end{definition}

\begin{remark}
In Definition~\ref{defn:sol}, we regard the full controlled path
${\bf Y}=(Y,Y')$, rather than only its first component $Y$, as the
solution. Although the second component is not independent and is
determined by
$
Y'=F(Y)Z',
$
it records the controlled information entering the second-order
expansion. Indeed, Theorem~\ref{thm:solution-equivalence} below shows that
the CRDE and the corresponding RDE have the same first component $Y$,
but different Gubinelli derivatives,
\[
Y'_{\mathrm{CRDE}}=F(Y)Z',
\qquad
Y'_{\mathrm{RDE}}=F(Y).
\]
For this reason, in the subsequent study of random dynamics generated by CRDE, we take the states to be the pairs
${\bf Y}_t=(Y_t,Y'_t)$.
\end{remark}

To construct the second-order lift of the path $Z$, we consider the following controlled rough path.
Let $\mathbf X$ be a rough path and ${\bf Z}=(Z,Z')\in\mathcal{C}_{\bf X}^\alpha([0,T],U)$.
For $s\in [0,T],\,x\in U$ and $r\in [s,T]$, define
\begin{align*}
L_{x}:  U\to U\otimes U,\, y\mapsto x\otimes y\,\text{ and }\, 
L^s: [s,T]\to \mathcal L(U,U\otimes U),\, r \mapsto L^s_r:=L_{Z_{s,r}} .
\end{align*}

\begin{lemma}~\cite{LiGao2025} \label{lemma:lift-tensor-integrand-controlled}
The pair $(L^s,(L^s)')$ is an $\mathbf X$-controlled rough path with values in $\mathcal L(U,U\otimes U)$, where the Gubinelli derivative $(L^s)':[s,T]\to \mathcal L(V,\mathcal L(U,U\otimes U))$ is given by
\begin{align*}
(L^s)'_r :V\to \mathcal L(U,U\otimes U), \qquad v\mapsto L_{Z'_r v}.
\end{align*}
Moreover, its remainder term $R^{L^s}_{u,r}$ given in~\meqref{eq:remainder} is
$
R^{L^s}_{u,r} = L_{R^Z_{u,r}}.
$
\end{lemma}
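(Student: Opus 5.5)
The plan is to verify the defining properties of Definition~\ref{def-controlledRP} directly for the pair $(L^s,(L^s)')$ on $[s,T]$. The key structural fact is that $x\mapsto L_x$ is a bounded linear map $U\to\mathcal L(U,U\otimes U)$. In finite dimensions with a reasonable tensor norm, $\|L_x\|\le C\|x\|$ (and $C=1$ for a cross norm). Consequently every algebraic identity for $Z$ transfers to $L^s$ by linearity, and every H\"older bound transfers with the constant $C$.

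First I compute the increment. For $s\le u\le r\le T$, linearity gives
\[
L^s_{u,r}=L_{Z_{s,r}}-L_{Z_{s,u}}=L_{Z_{s,r}-Z_{s,u}}=L_{Z_{u,r}},
\]
so $L^s_{u,r}$ does not depend on $s$. Next I identify $(L^s)'_u X_{u,r}$. The composition $v\mapsto L_{Z'_u v}$ is linear and bounded, so it lies in $\mathcal L(V,\mathcal L(U,U\otimes U))$. Applying it to $X_{u,r}$ gives $(L^s)'_uX_{u,r}=L_{Z'_uX_{u,r}}$. Subtracting and using the definition \eqref{eq:remainder} of $R^Z$, linearity yields
\[
R^{L^s}_{u,r}=L_{Z_{u,r}-Z'_uX_{u,r}}=L_{R^Z_{u,r}},
\]
which is the claimed formula for the remainder.

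Then I check the two finiteness conditions. From $\|L_{R^Z_{u,r}}\|\le C\|R^Z_{u,r}\|$ we get $\|R^{L^s}\|_{2\alpha,[s,T]}\le C\|R^Z\|_{2\alpha,[0,T]}<\infty$. Similarly,
\[
(L^s)'_{u,r}v=L_{Z'_{u,r}v},
\]
so $\|(L^s)'_{u,r}\|\le C\|Z'_{u,r}\|$ and hence $\|(L^s)'\|_{\alpha,[s,T]}\le C\|Z'\|_{\alpha,[0,T]}<\infty$. For the sup-norm parts of the controlled norm (not needed for membership, but useful later), I would record $\|L^s_r\|\le C\|Z_{s,r}\|\le 2C\|Z\|_\infty$ and $\|(L^s)'_r\|\le C\|Z'\|_\infty$.

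There is no real obstacle here. The only point needing care is the norm convention on $U\otimes U$ that yields $\|x\otimes y\|\le C\|x\|\|y\|$. In finite dimensions this holds for any norm by equivalence of norms, and I would simply state it. I would also note that Definition~\ref{def-controlledRP} is being applied on the interval $[s,T]$ rather than $[0,T]$, which is harmless.
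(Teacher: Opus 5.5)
Your proposal is correct: everything follows from linearity and boundedness of $x\mapsto L_x$, which gives $L^s_{u,r}=L_{Z_{u,r}}$, $R^{L^s}_{u,r}=L_{R^Z_{u,r}}$, and both H\"older bounds with the constant $C$ from $\|x\otimes y\|\le C\|x\|\|y\|$. The paper gives no proof of its own (it cites \cite{LiGao2025}), and your direct verification is the standard argument behind that citation.
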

 
For any $(s,t)\in \dt{0,T}$, define the canonical second level~\cite{LiGao2025} 
\begin{equation}
\label{eq:lift-canonical-second-level}
\mathbb Z: \dt{0,T}\to U\otimes U, \qquad (s,t)\mapsto \mathbb Z_{s,t} 
:=
\sint_s^t Z_{s,r} \otimes dZ_r 
:=
\sint_s^t L_r^s \,d{\bf Z}_r \in U\otimes U.
\end{equation}
Here the controlled-against-controlled rough integral is well defined by~\meqref{eq:prelim-controlled-integral}. 
Moreover, there exists an increasing function $M$ such that~\cite{LiGao2025}
\begin{equation}
\label{eq:lift-second-level-local-estimate}
\left\|
\mathbb Z_{s,t}
-
(Z'_s\otimes Z'_s)\mathbb X_{s,t}
\right\|
\le
M\bigl(\|\mathbf X\|_{\alpha,[0,T]},\|{\bf Z}\|_{\mathcal{C}_{\bf X}^\alpha([0,T])}\bigr)
|t-s|^{3\alpha}.
\end{equation}

\begin{lemma}~\cite{LiGao2025}~\label{thm:lift-canonical-lift-rough-path}
The canonical lift $\mathcal Z=(Z,\mathbb Z) \in \mathcal{D}^{\alpha}([0,T],U)$ is an $\alpha$-H\"older rough path.
Moreover,
\begin{equation*}
\|\mathcal Z\|_{\alpha,[0,T]}
\le
M_{\mathrm{lift}}
\bigl(
\|\mathbf X\|_{\alpha,[0,T]},
\|{\bf Z}\|_{\mathcal C_{\mathbf X}^\alpha([0,T])}
\bigr)
\end{equation*}
for an increasing function $M_{\mathrm{lift}}$.
\end{lemma}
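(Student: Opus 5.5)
To prove that $\mathcal Z=(Z,\mathbb Z)$ is an $\alpha$-H\"older rough path, I will check the two requirements of Definition~\ref{def:prelim-level-two-rough-path} separately: Chen's relation, and the $2\alpha$-H\"older bound for $\mathbb Z$. The $\alpha$-H\"older regularity of $Z$ itself needs no new argument. From the controlled decomposition $Z_{s,t}=Z'_sX_{s,t}+R^Z_{s,t}$ we get
\[
\|Z\|_{\alpha,[0,T]}\le \|Z'\|_{\infty}\|X\|_{\alpha}+\|R^Z\|_{2\alpha}T^{\alpha},
\]
and the right-hand side is controlled by $\|\mathbf X\|_{\alpha,[0,T]}$ and $\|{\bf Z}\|_{\mathcal C^\alpha_{\mathbf X}([0,T])}$, with constants depending on $T$.

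\textbf{Chen's relation.} Fix $s\le u\le t$. For $r\in[u,t]$ we have $Z_{s,r}=Z_{s,u}+Z_{u,r}$, and hence
\[
L^s_r=L_{Z_{s,u}}+L^u_r .
\]
The first summand is a constant element of $\mathcal L(U,U\otimes U)$, with zero Gubinelli derivative. The Gubinelli derivatives of $L^s$ and $L^u$ agree by Lemma~\ref{lemma:lift-tensor-integrand-controlled}, since both equal $v\mapsto L_{Z'_rv}$. The rough integral \eqref{eq:prelim-controlled-integral} is additive over adjacent intervals, because it is a limit of Riemann-type sums and we may take partitions containing $u$. It is also linear in the controlled integrand. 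Therefore
\[
\mathbb Z_{s,t}=\sint_s^u L^s_r\,d{\bf Z}_r+\sint_u^t L_{Z_{s,u}}\,d{\bf Z}_r+\sint_u^t L^u_r\,d{\bf Z}_r .
\]
The first and third terms are $\mathbb Z_{s,u}$ and $\mathbb Z_{u,t}$. In the middle term the Riemann sums reduce to $\sum Z_{s,u}\otimes Z_{a,b}$, which telescopes to $Z_{s,u}\otimes Z_{u,t}$. This gives Chen's relation.

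\textbf{$2\alpha$-H\"older bound.} I will use the local estimate \eqref{eq:lift-second-level-local-estimate}:
\[
\|\mathbb Z_{s,t}\|\le \|Z'\|_\infty^2\|\mathbb X\|_{2\alpha}|t-s|^{2\alpha}+M\,|t-s|^{3\alpha}\le \bigl(\|Z'\|_\infty^2\|\mathbb X\|_{2\alpha}+M T^{\alpha}\bigr)|t-s|^{2\alpha}.
\]
Here $M=M(\|\mathbf X\|_{\alpha,[0,T]},\|{\bf Z}\|_{\mathcal C^\alpha_{\mathbf X}([0,T])})$. Taking square roots and the maximum with the bound on $\|Z\|_\alpha$ yields $M_{\mathrm{lift}}$. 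This function is increasing in its arguments, since each constituent is, with the dependence on $T$ absorbed into $M_{\mathrm{lift}}$.

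\textbf{Main obstacle.} The only delicate point is justifying \eqref{eq:lift-second-level-local-estimate} if it cannot simply be quoted. To do so, I would apply Lemma~\ref{thm:prelim-controlled-integral} with integrand $L^s$. The germ at the left endpoint is $L^s_sZ_{s,t}+(L^s)'_sZ'_s\mathbb X_{s,t}$. Since $L^s_s=L_0=0$, this germ equals $(Z'_s\otimes Z'_s)\mathbb X_{s,t}$. It then remains to bound $\|(L^s,(L^s)')\|_{\mathcal C^\alpha_{\mathbf X}}$ uniformly in $s$ by a polynomial in $\|\mathbf Z\|$ and $\|\mathbf X\|$. This bound follows from the explicit remainder $R^{L^s}=L_{R^Z}$, from $\|L_x\|\le\|x\|$, and from $\|(L^s)'\|_\alpha\lesssim\|Z'\|_\alpha$.
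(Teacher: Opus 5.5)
Your proof is correct: Chen's relation follows as you say from additivity of the rough integral and the decomposition $(L^s,(L^s)')=(L_{Z_{s,u}},0)+(L^u,(L^u)')$ on $[u,t]$. The $2\alpha$-bound follows from the germ $(Z'_s\otimes Z'_s)\mathbb X_{s,t}$ together with your uniform-in-$s$ control of $\|(L^s,(L^s)')\|_{\mathcal C^\alpha_{\mathbf X}}$. The paper gives no proof of its own and simply imports the lemma, along with the local estimate \eqref{eq:lift-second-level-local-estimate} that you rely on, from \cite{LiGao2025}; your proposal is a self-contained version of the standard argument that this citation stands in for.
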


We now consider the drift-containing RDE
\begin{equation}\label{eq:Z-RDE}
dY_{t}=F(Y_{t})d\mathcal Z_{t}+F_{0}(Y_{t})dt,
\end{equation}
which is obtained from~\meqref{eq-diff-RDE} by replacing ${\bf X}$ with $\mathcal Z$.

\begin{remark}
For every small interval $[s,t]\subset[0,T]$, the first component $Y$ of a solution $(Y,Y')$ to~\eqref{eq:Z-RDE} admits the local expansion 
\begin{equation}
\label{eq:solution-lifted-local-expansion}
Y_{s,t}
=
F(Y_s)Z_{s,t}
+
DF(Y_s)F(Y_s)\mathbb Z_{s,t}
+
F_0(Y_s)(t-s)
+
R^Y_{s,t},
\end{equation}
where
$
|R^Y_{s,t}|\lesssim |t-s|^{3\alpha}.
$
\end{remark}

To pave the way for further rigorous analysis of drift-containing CRDEs, 
we derive a correspondence between these equations and the drift-containing RDEs.

\begin{theorem}
\label{thm:solution-equivalence}
Assume $F\in C_b^2(W,\mathcal L(U,W))$ and $F_0\in C_b^1(W,W)$. Then $(Y, Y')\in\mathcal{C}_{\bf X}^\alpha([0,T],W)$ with $Y'=F(Y)Z'$ solves the drift-containing CRDE~\meqref{main-diff-CRDE}
if and only if $(Y,F(Y))\in\mathcal{C}_{\mathcal Z}^\alpha([0,T],W)$ solves the drift-containing RDE~\eqref{eq:Z-RDE}. 
\end{theorem}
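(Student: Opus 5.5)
Both formulations share the drift integral $\sint_0^t F_0(Y_r)\,dr$, which is a Riemann integral of a continuous path. The equivalence therefore reduces to showing that, for the fixed path $Y$, the controlled-against-controlled integral $\sint_0^t F(Y_r)\,d{\bf Z}_r$ coincides with the classical rough integral $\sint_0^t F(Y_r)\,d\mathcal Z_r$. In addition, the two controlled structures must be checked to be admissible on each side.

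\emph{Step 1: admissibility of the controlled structures.} I would first show that $Y$ is ${\bf X}$-controlled with $Y'=F(Y)Z'$ if and only if $Y$ is $\mathcal Z$-controlled with derivative $F(Y)$. The link is the decomposition
\[
Y_{s,t}-F(Y_s)Z_{s,t}=\bigl(Y_{s,t}-F(Y_s)Z'_sX_{s,t}\bigr)-F(Y_s)R^Z_{s,t},
\]
together with $\|R^Z_{s,t}\|\lesssim|t-s|^{2\alpha}$ and the boundedness of $F$. The $\alpha$-H\"older regularity of $F(Y)$ follows from $F\in C_b^1$ and the H\"older continuity of $Y$. The same regularity of $F(Y)Z'$ additionally uses that $Z'$ is bounded and $\alpha$-H\"older. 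For the reverse direction, $Y$ is also ${\bf X}$-controlled with derivative $F(Y)Z'$, since $Z_{s,t}=Z'_sX_{s,t}+R^Z_{s,t}$.

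\emph{Step 2: the integrand as a controlled path in both senses.} Next I would record the two controlled structures of the integrand $F(Y)$. As an ${\bf X}$-controlled path with values in $\mathcal L(U,W)$, it has Gubinelli derivative $(F(Y))'_r=DF(Y_r)Y'_r=DF(Y_r)F(Y_r)Z'_r$, with remainder of order $2\alpha$; this uses $F\in C_b^2$ and a Taylor expansion. As a $\mathcal Z$-controlled path, its derivative is $DF(Y_r)F(Y_r)$.

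\emph{Step 3: comparison of the integrals (main step).} Lemma~\ref{thm:prelim-controlled-integral} gives the local approximation of the CRDE integral over $[u,v]$ as
\[
F(Y_u)Z_{u,v}+DF(Y_u)F(Y_u)Z'_u\,Z'_u\,\mathbb X_{u,v}
\]
up to $O(|v-u|^{3\alpha})$, which I would read as $DF(Y_u)F(Y_u)(Z'_u\otimes Z'_u)\mathbb X_{u,v}$. The classical rough integral against $\mathcal Z$ has the local approximation
\[
F(Y_u)Z_{u,v}+DF(Y_u)F(Y_u)\mathbb Z_{u,v},
\]
again up to $O(|v-u|^{3\alpha})$. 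By \eqref{eq:lift-second-level-local-estimate}, these two germs differ by at most $\|DF\|_\infty\|F\|_\infty\, M\,|v-u|^{3\alpha}$. Since $3\alpha>1$, summing over a partition shows that the difference of the Riemann sums is bounded by $C\,|\mathcal P|^{3\alpha-1}\to0$. Hence the two limits agree for all $s<t$.

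\emph{Step 4: conclusion.} Plugging the equality of the integrals into \eqref{main-integ-CRDE} yields the integral form of \eqref{eq:Z-RDE}, and conversely. Combined with Step~1, this gives both implications.

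I expect Step 3 to be the main obstacle. The difficulty is correctly identifying the contraction $Y'_uZ'_u\mathbb X_{u,v}$ with $DF(Y_u)F(Y_u)(Z'_u\otimes Z'_u)\mathbb X_{u,v}$ at the level of tensor indices, consistently with the convention used in Lemma~\ref{thm:prelim-controlled-integral}. Once that identification is fixed, the uniform $3\alpha$-closeness of the germs, supplied by \eqref{eq:lift-second-level-local-estimate} and the bounds of Lemma~\ref{thm:lift-canonical-lift-rough-path}, makes the limits coincide by a standard sewing argument.
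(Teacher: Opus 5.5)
Your proposal is correct and follows essentially the paper's argument: the key step, comparing the two Riemann sums via \eqref{eq:lift-second-level-local-estimate} and using $3\alpha>1$, is exactly the paper's proof of sufficiency, and your Step~1 remainder decomposition matches the paper's check that the controlled structures transfer. The only difference is that the paper proves necessity by deriving the local expansion \eqref{eq:solution-lifted-local-expansion}, whereas you use the same sum comparison in both directions, which is slightly more direct because it works with the integral equations themselves.
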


\begin{proof} 
(Necessity) Suppose $(Y, Y'=F(Y)Z')$ solves the drift-containing CRDE~\meqref{main-diff-CRDE}. Then by $Y'_{t}=F(Y_{t})Z'_{t}$ and $F\in C_b^2(W,\mathcal L(U,W))$, we get
\[
F({\bf Y})=\big(F(Y),(F(Y))'=DF(Y)Y'\big)\in \mathcal{C}^\alpha_{\mathbf X}([0,T],\mathcal L(U,W)). 
\]
By Lemma~\ref{thm:prelim-controlled-integral},  
\begin{align}\label{eq:solution-controlled-integral-to-lifted}
\sint_{s}^{t}F(Y_{r})d{\bf Z}_{r}&\ =F(Y_{s})Z_{s,t}+F(Y)'_{s}Z'_{s}\mathbb{X}_{s,t}+O(|t-s|^{3\alpha})\nonumber\\
&\ =F(Y_{s})Z_{s,t}+DF(Y_{s})F(Y_{s})(Z'_{s}\otimes Z'_{s})\mathbb{X}_{s,t}+O(|t-s|^{3\alpha})\nonumber\\
&\ =F(Y_{s})Z_{s,t}+DF(Y_{s})F(Y_{s})\mathbb{Z}_{s,t}+O(|t-s|^{3\alpha}).\hspace{1cm}(\text{by }\eqref{eq:lift-second-level-local-estimate})
\end{align}
For the drift term, we have
\[
\sint_s^t F_0(Y_r)\,dr
=
F_0(Y_s)(t-s)
+
\sint_s^t
\bigl(
F_0(Y_r)-F_0(Y_s)
\bigr)\,dr.
\]
Since $F_0\in C_b^1(W,W)$,
\[
\|F_0(Y_r)-F_0(Y_s)\|
\le
\|DF_0\|\cdot \|Y_r-Y_s\|.
\]
As the controlled path $Y$ is $\alpha$-H\"older continuous,  it follows that $\|Y_r-Y_s\| \lesssim |r-s|^\alpha$ and 
\begin{align*}
\|\sint_{s}^{t}(F_{0}(Y_{r})-F_{0}(Y_{s}))dr\|\le &\ \sint_{s}^{t}\|F_{0}(Y_{r})-F_{0}(Y_{s})\|dr\\
\le &\ \|DF_{0}\| \sint_{s}^{t}\|Y_{r}-Y_{s}\|dr\\
\lesssim &\ \sint_{s}^{t}|r-s|^{\alpha}dr\\
\le &\ |t-s|^{\alpha}\sint_{s}^{t}dr\\
\le &\ |t-s|^{1+\alpha}\\
\le &\ |t-s|^{3\alpha}\hspace{1cm}(\text{by }\alpha\le \frac{1}{2}). 
\end{align*}
Consequently,
\begin{equation}
\label{eq:solution-drift-expansion}
\sint_s^t F_0(Y_r)\,dr
=
F_0(Y_s)(t-s)
+
O(|t-s|^{3\alpha}).
\end{equation}
Combining~\eqref{eq:solution-controlled-integral-to-lifted} and~\eqref{eq:solution-drift-expansion} yields that 
\[
Y_{s,t}
=
F(Y_s)Z_{s,t}
+
DF(Y_s)F(Y_s)\mathbb Z_{s,t}
+
F_0(Y_s)(t-s)
+
O(|t-s|^{3\alpha}).
\]
This is precisely the local expansion~\eqref{eq:solution-lifted-local-expansion}. Then by \[F\in C_b^2(W,\mathcal L(U,W)),\qquad F_0\in C_b^1(W,W),\] we obtain that 
$(Y,F(Y))\in \mathcal{C}_{\mathcal Z}^\alpha([0,T],W)$ is a solution of the drift-containing RDE~\eqref{eq:Z-RDE}.

(Sufficiency) Suppose that $(Y, F(Y))$ solves the drift-containing RDE~\eqref{eq:Z-RDE}.  Then 
\begin{align*}
Y_{s,t}&\ =F(Y_s)Z_{s,t}+DF(Y_s)F(Y_s)\mathbb Z_{s,t}+F_0(Y_s)(t-s)+R^Y_{s,t}\\
&\ =F(Y_{s})Z'_{s}X_{s,t}+F(Y_{s})R^Z_{s,t}+DF(Y_{s})F(Y_{s})\mathbb Z_{s,t}+F_{0}(Y_{s})(t-s)+R^{Y}_{s,t}\\
&\ =F(Y_{s})Z'_{s}X_{s,t}+O(|t-s|^{2\alpha}).
\end{align*}
Thus $(Y,Y'=F(Y)Z')$ is an $\mathbf X$-controlled rough path.
It remains to show that the integral equations coincide.
Let $\mathcal P$ be a partition of $[0,t]$.
The controlled-driver Riemann sum is
\[
S_{\mathcal P}^{\mathrm{ctrl}}
=
\sum_{[u,v]\in\mathcal P}
\left(
F(Y_u)Z_{u,v}
+
(F(Y))'_uZ'_u\mathbb X_{u,v}
\right),
\]
while the lifted rough Riemann sum is
\[
S_{\mathcal P}^{\mathrm{rp}}
=
\sum_{[u,v]\in\mathcal P}
\left(
F(Y_u)Z_{u,v}
+
DF(Y_u)F(Y_u)\mathbb Z_{u,v}
\right),
\]
with $\mathcal Z$-controlled rough path $(Y,F(Y))$. Since $Y'_u=F(Y_u)Z'_u$, we have 
\[
(F(Y))'_u
=
DF(Y_u)Y'_u
=
DF(Y_u)F(Y_u)Z'_u,
\]
and so
\[
(F(Y))'_uZ'_u\mathbb X_{u,v}
=
DF(Y_u)F(Y_u)(Z'_u\otimes Z'_u)\mathbb X_{u,v}.
\]
Therefore,  
\begin{align*}
\|S_{\mathcal P}^{\mathrm{rp}}-S_{\mathcal P}^{\mathrm{ctrl}}\|=& 
\bigg\|\sum_{[u,v]\in\mathcal P}\Big(DF(Y_u)F(Y_u)\mathbb{Z}_{u,v}-DF(Y_u)F(Y_u)(Z'_u\otimes Z'_u)\mathbb X_{u,v}\Big)\bigg\|\\
\le &\sum_{[u,v]\in\mathcal P}\|DF(Y_u)F(Y_u)(\mathbb Z_{u,v}-(Z'_u\otimes Z'_u)\mathbb X_{u,v})\|\\
\le &\sum_{[u,v]\in\mathcal P}\|DF(Y_u)\|\cdot \|F(Y_{u})\|\cdot \|\mathbb Z_{u,v}-(Z'_{u}\otimes Z'_{u})\mathbb{X}_{u,v}\|\\
\lesssim & \sum_{[u,v]\in\mathcal P}|v-u|^{3\alpha}\hspace{1cm}(\text{by~\eqref{eq:lift-second-level-local-estimate} and }F\in C_b^2(W,\mathcal L(U,W))).
\end{align*}
Since $3\alpha>1$, the last sum tends to $0$ as $|\mathcal P|\to0$.
Thus the lifted rough integral and the controlled-driver integral agree:
\[
\sint_0^t F(Y_r)\,d\mathbf Z_r
=
\sint_0^t F(Y_r)\,d\mathcal Z_r.
\]
The drift terms are identical in the two formulations. Thus $(Y, Y'=F(Y)Z')$ is a solution of the drift-containing CRDE~\meqref{main-diff-CRDE}.
This completes the proof.
\end{proof}

\begin{remark}
\label{rmk:controlled-formulation-significance}
Theorem~\ref{thm:solution-equivalence} shows that the CRDE is equivalent
to an RDE driven by the canonical lift
$\mathcal Z=(Z,\mathbb Z)$, but this does not make the controlled
formulation redundant.

\begin{enumerate}
\item
The natural data are the reference rough path ${\bf X}$ and the
controlled driver ${\bf Z}=(Z,Z')$. In contrast to the classical RDE
setting, the second level $\mathbb Z$ is not prescribed a priori.
It must be additionally reconstructed from the controlled data by
\[
\mathbb Z_{s,t}
=
\sint_s^t Z_{s,r}\otimes dZ_r.
\]
Thus, the controlled formulation provides the extra second-order
information needed to turn $Z$ into the rough path
$\mathcal Z=(Z,\mathbb Z)$.

\item
The local dynamical analysis also requires derivative, integrability,
and tempered estimates in terms of these controlled data; these are
developed in Section~\ref{sec:stationary-points}.
\end{enumerate}
Thus, the controlled formulation does not produce different state
trajectories from the lifted RDE, but preserves the additional
structure needed for stability and random dynamical analysis.
\end{remark}

The preceding correspondence simplifies in the special case where the controlled driver has identity Gubinelli derivative.

\begin{corollary}
Under the setting in Theorem~\ref{thm:solution-equivalence}, if $V= U$ and the controlled driver ${\bf Z}=(Z,Z')$ satisfies 
$$Z':[0,T]\to \mathcal L(V,U),\qquad t\mapsto \mathrm{Id}_{V},$$
then ${\bf Y}=(Y,Y'=F(Y))$ is a solution to~\meqref{main-diff-CRDE} if and only if ${\bf Y}$ is a solution to~\eqref{eq:Z-RDE}.
\end{corollary}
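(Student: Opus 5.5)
The plan is to specialize Theorem~\ref{thm:solution-equivalence} to $Z'\equiv\mathrm{Id}_V$. The hypotheses on $F$ and $F_0$ are inherited from that theorem. Its two Gubinelli derivatives, $F(Y)Z'$ for the CRDE and $F(Y)$ for the RDE, then coincide. So the corollary amounts to checking that, in this special case, the two controlled structures (with respect to ${\bf X}$ and to $\mathcal Z$) describe the same pair ${\bf Y}=(Y,F(Y))$.

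\textbf{Step 1: the controlled structure of ${\bf Z}$.} Since $Z'\equiv\mathrm{Id}_V$, the relation $Z_{s,t}=X_{s,t}+R^Z_{s,t}$ shows that $Z$ differs from $X$ by a $2\alpha$-H\"older remainder. Substituting $Z'_s\otimes Z'_s=\mathrm{Id}_{V\otimes V}$ into~\eqref{eq:lift-second-level-local-estimate} gives
\[
\|\mathbb Z_{s,t}-\mathbb X_{s,t}\|\le M|t-s|^{3\alpha}.
\]

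\textbf{Step 2: matching the controlled paths.} If ${\bf Y}=(Y,F(Y))$ solves~\eqref{main-diff-CRDE}, then $Y'=F(Y)Z'=F(Y)$, so the derivative condition in Definition~\ref{defn:sol} holds. The necessity part of Theorem~\ref{thm:solution-equivalence} then shows that $(Y,F(Y))\in\mathcal C^\alpha_{\mathcal Z}$ solves~\eqref{eq:Z-RDE}.

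Conversely, suppose $(Y,F(Y))$ solves~\eqref{eq:Z-RDE}. The sufficiency part gives $(Y,F(Y)Z')=(Y,F(Y))\in\mathcal C^\alpha_{\bf X}$, and this pair solves~\eqref{main-diff-CRDE}.

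\textbf{Step 3: the space in which $(Y,F(Y))$ is controlled.} For ${\bf X}$-control, $R^{Y,{\bf X}}_{s,t}=Y_{s,t}-F(Y_s)X_{s,t}$. For $\mathcal Z$-control, $R^{Y,\mathcal Z}_{s,t}=Y_{s,t}-F(Y_s)Z_{s,t}$. They differ by $F(Y_s)R^Z_{s,t}$, which is $O(|t-s|^{2\alpha})$ because $F$ is bounded. Hence $(Y,F(Y))$ is ${\bf X}$-controlled if and only if it is $\mathcal Z$-controlled, and ``${\bf Y}$ is a solution'' has an unambiguous meaning in both equations.

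The only delicate point is this bookkeeping in Step 3, namely that the same pair ${\bf Y}$ is regarded as controlled by different rough paths. Once the remainders are seen to differ by $F(Y)R^Z$, the statement reduces directly to Theorem~\ref{thm:solution-equivalence}.
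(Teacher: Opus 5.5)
Your proposal is correct and matches the paper: the paper gives no separate proof and treats the corollary as the direct specialization of Theorem~\ref{thm:solution-equivalence}, using that $F(Y)Z'=F(Y)$ when $Z'\equiv\mathrm{Id}_V$, which is exactly your Step~2. Your Steps~1 and~3 are correct but not logically needed, since membership of $(Y,F(Y))$ in $\mathcal C^\alpha_{\bf X}$ and in $\mathcal C^\alpha_{\mathcal Z}$ is already part of the conclusions of the theorem's two directions; Step~3 does usefully clarify which space ``${\bf Y}$ is a solution'' refers to.
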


We now turn from the structural correspondence above to the pathwise well-posedness and stability of the drift-containing CRDE~\meqref{main-diff-CRDE}.

\begin{theorem} [Universal Limit Theorem] 
\label{thm:solution-pathwise-wellposedness}
Assume $F\in C_b^3(W,\mathcal L(U,W))$ and $F_0\in C_b^1(W,W)$. Let 
$$\mathbf X=(X,\mathbb X),\,\widetilde{\mathbf{X}}=(\widetilde{X},\widetilde{\mathbb X})\in\mathcal D^\alpha([0,T],V),$$ 
and
$${\bf Z}=(Z,Z')\in \mathcal{C}_{\bf X}^{\alpha}([0,T],U), \qquad \widetilde{\bf Z}=(\widetilde{Z},\widetilde{Z'})\in\mathcal C_{\widetilde{\bf X}}^\alpha([0,T],U).$$ 
Then
\begin{enumerate}
\item~\label{it-unique} 
For each $Y_0\in W$, there exists a unique solution $\mathbf Y \in\mathcal C_{\bf X}^\alpha([0,T],W)$ to the drift-containing CRDE~\eqref{main-diff-CRDE}.

\item~\label{it-contin} 
Let $\mathbf Y=(Y,Y')\in \mathcal C_{\bf X}^\alpha([0,T],W)$ and $\widetilde{\bf Y}=(\widetilde{Y},\widetilde{Y'})\in\mathcal C_{\widetilde{\bf X}}^\alpha([0,T],W)$ be the solutions to~\eqref{main-diff-CRDE} driven by ${\bf Z},\,\widetilde{\bf Z}$ with initial conditions $Y_0,\,\widetilde{Y}_0$ respectively. Then the following estimate holds true: 
\begin{align*}
    d_{{\bf X},\widetilde{\bf X};\alpha}(\mathbf Y,\widetilde{\bf Y})\le &\ C_{\alpha}M \bigg(T,\|F\|_{C_b^3},\|F_0\|_{C_b^1},\|Y'_0\|,\|\widetilde{Y}'_{0}\|,\|Z'_0\|,\|\widetilde{Z}'_0\|,\|{\bf Y}\|_{\mathcal C_{\bf X}^\alpha([0,T])},\|\widetilde{\bf Y}\|_{\mathcal C_{\widetilde{\bf X}}^\alpha([0,T])},\\
    &\ \|{\bf Z}\|_{\mathcal C_{\bf X}^\alpha([0,T])},\|\widetilde{\bf Z}\|_{\mathcal C_{\widetilde{\mathbf X}}^\alpha([0,T])},\|\mathbf X\|_{\alpha},\|\widetilde{\bf X}\|_{\alpha}\bigg)\times \bigg( d_{{\bf X},\widetilde{\bf X};\alpha}(\mathbf Z,\widetilde{\bf Z})+\|\mathbf X-\widetilde{\bf X}\|_\alpha\\
    &\ +\|Y_0-\widetilde{Y}_0\|+\|Y'_0-\widetilde{Y}'_0\|+\|Z_0-\widetilde{Z}_0\|+\|Z'_0-\widetilde{Z}'_0\| \bigg).
    \end{align*}
\end{enumerate}
\end{theorem}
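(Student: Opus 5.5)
The plan is to reduce everything to the classical drift-containing RDE driven by the canonical lift, using Theorem~\ref{thm:solution-equivalence} and Lemma~\ref{thm:lift-canonical-lift-rough-path}, and then to transfer the estimates back to the $\mathbf X$-controlled level.

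\emph{Existence and uniqueness (part~\eqref{it-unique}).} By Lemma~\ref{thm:lift-canonical-lift-rough-path}, $\mathcal Z=(Z,\mathbb Z)\in\mathcal D^\alpha([0,T],U)$ with $\|\mathcal Z\|_\alpha\le M_{\mathrm{lift}}$. For $F\in C_b^3$ and $F_0\in C_b^1$, the classical theory of RDEs with drift (for instance the Picard iteration/contraction argument of~\cite{FH20} on small intervals of length depending only on $\|\mathcal Z\|_\alpha$, then patched together) gives a unique solution $(Y,F(Y))\in\mathcal C^\alpha_{\mathcal Z}([0,T],W)$ of~\eqref{eq:Z-RDE}. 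Theorem~\ref{thm:solution-equivalence} then yields a solution $(Y,F(Y)Z')\in\mathcal C^\alpha_{\mathbf X}$ of~\eqref{main-diff-CRDE}. Uniqueness follows from the same theorem: two CRDE solutions give two RDE solutions, and these coincide in their first component, hence also in $Y'=F(Y)Z'$.

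\emph{Continuity (part~\eqref{it-contin}).} I would first prove a lift-continuity estimate
\[
\|\mathcal Z;\widetilde{\mathcal Z}\|_{\alpha}:=\|Z-\widetilde Z\|_\alpha+\|\mathbb Z-\widetilde{\mathbb Z}\|_{2\alpha}\lesssim d_{\mathbf X,\widetilde{\mathbf X};\alpha}(\mathbf Z,\widetilde{\mathbf Z})+\|\mathbf X-\widetilde{\mathbf X}\|_\alpha+\|Z'_0-\widetilde Z'_0\|.
\]
For the first level, write $Z_{s,t}=Z'_sX_{s,t}+R^Z_{s,t}$ and use $\|Z'-\widetilde Z'\|_\infty\le\|Z'_0-\widetilde Z'_0\|+T^\alpha\|Z'-\widetilde Z'\|_\alpha$. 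For the second level, write $\mathbb Z_{s,t}=(Z'_s\otimes Z'_s)\mathbb X_{s,t}+\Xi_{s,t}$, where $\Xi$ is the remainder in~\eqref{eq:lift-second-level-local-estimate}. The difference $\Xi-\widetilde\Xi$ is then controlled using the continuity of the controlled-against-controlled integral~\cite{BG2022}, applied to $(L^s,(L^s)')$ and its tilde version from Lemma~\ref{lemma:lift-tensor-integrand-controlled}. Next, invoke the classical local Lipschitz stability of RDEs with drift in the rough path metric: the solutions $(Y,F(Y))$ and $(\widetilde Y,F(\widetilde Y))$ satisfy
\[
d_{\mathcal Z,\widetilde{\mathcal Z};\alpha}\le C\bigl(\|Y_0-\widetilde Y_0\|+\|\mathcal Z;\widetilde{\mathcal Z}\|_\alpha\bigr).
\]
This step uses $F\in C_b^3$.

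\emph{Transfer back to the $\mathbf X$-controlled distance.} I would estimate the two terms of $d_{\mathbf X,\widetilde{\mathbf X};\alpha}(\mathbf Y,\widetilde{\mathbf Y})$ separately.
\begin{itemize}
\item For the derivative term, use $Y'-\widetilde Y'=(F(Y)-F(\widetilde Y))Z'+F(\widetilde Y)(Z'-\widetilde Z')$. Its $\alpha$-H\"older norm is bounded by products of $\|Y-\widetilde Y\|_{\alpha}$, $\|Y-\widetilde Y\|_\infty$ (using the initial differences), $\|Z'-\widetilde Z'\|_\alpha$ and $\|Z'_0-\widetilde Z'_0\|$, with $C_b^2$ bounds on $F$.
\item For the remainder term, write
\[
R^Y_{s,t}=F(Y_s)R^Z_{s,t}+DF(Y_s)F(Y_s)\mathbb Z_{s,t}+F_0(Y_s)(t-s)+R^{Y,\mathcal Z}_{s,t},
\]
where $R^{Y,\mathcal Z}$ is the $\mathcal Z$-remainder. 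Each term difference is bounded by the quantities already controlled. In particular, $\|R^{Y,\mathcal Z}-\widetilde R^{Y,\mathcal Z}\|_{2\alpha}$ is part of the RDE stability estimate, and $\|Y-\widetilde Y\|_\alpha$ follows from the $\mathcal Z$-controlled distance together with $\|Z-\widetilde Z\|_\alpha$.
\end{itemize}
Collecting constants gives the increasing function $M$ of the listed norms. The quantities $\|Y'_0-\widetilde Y'_0\|$ and $\|Z_0-\widetilde Z_0\|$ enter only through sup-norm bounds.

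\emph{Main obstacle.} I expect the hardest step to be the second-level lift continuity $\|\mathbb Z-\widetilde{\mathbb Z}\|_{2\alpha}$. There $\mathbf X$ and $\widetilde{\mathbf X}$ differ, so the sewing remainders must be compared across different reference paths. My first attempt would apply the sewing lemma directly to the difference germ
\[
A_{u,v}-\widetilde A_{u,v},\qquad A_{u,v}=L^s_uZ_{u,v}+(L^s)'_uZ'_u\mathbb X_{u,v}.
\]
I would expand $\delta A-\delta\widetilde A$ via Chen's relation into sums of products, each containing exactly one difference factor, and bound each such product.
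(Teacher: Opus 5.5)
Your part (1) is the same as the paper's: reduction via Theorem~\ref{thm:solution-equivalence} to the lifted drift-containing RDE, plus classical well-posedness.

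For part (2) your route is correct but genuinely different. The paper never passes through $\mathcal Z$ there. Following \cite[Theorem~4.1]{LiGao2025}, it writes both solutions in fixed-point form $\bigl(Y_0+\sint_0^{\cdot} F(Y_r)\,d\mathbf Z_r+\sint_0^{\cdot} F_0(Y_r)\,dr,\;F(Y)Z'\bigr)$. It then estimates their difference directly in $d_{\mathbf X,\widetilde{\mathbf X};\alpha}$, so the stability argument is redone inside the controlled-against-controlled calculus, with the continuity estimates of \cite{BG2022} as the natural input.

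You instead factor the solution map as $(\mathbf X,\mathbf Z)\mapsto\mathcal Z\mapsto(Y,F(Y))\mapsto(Y,F(Y)Z')$. This gives three stages: a Lipschitz estimate for the canonical lift, black-box classical RDE stability, and a transfer from $\mathcal Z$-controlled to $\mathbf X$-controlled quantities.

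Your sewing argument for the lift is sound, and easier than you fear. For $A^s_{u,v}=Z_{s,u}\otimes Z_{u,v}+(Z'_u\otimes Z'_u)\mathbb X_{u,v}$, Chen's relation gives $\delta A^s_{u,m,v}=-Z_{u,m}\otimes Z_{m,v}+(Z'_u\otimes Z'_u)(\mathbb X_{m,v}+X_{u,m}\otimes X_{m,v})-(Z'_m\otimes Z'_m)\mathbb X_{m,v}$. This is independent of $s$, so the sewing bound on the difference of the two germs is uniform in the base point.

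Your route buys three things. It is modular. It yields a reusable continuity statement for the lift map, which the paper never isolates. It also gives a slightly sharper right-hand side, since $\|Y'_0-\widetilde Y'_0\|$ and $\|Z_0-\widetilde Z_0\|$ are not needed at all. The paper's route stays in the variables $(\mathbf X,\mathbf Z)$ throughout, consistent with Remark~\ref{rmk:controlled-formulation-significance}. It also avoids remainder bookkeeping between the two levels, at the price of redoing the fixed-point and patching estimates, including the drift, by hand.

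One small fix in your transfer step concerns the identity $R^Y_{s,t}=F(Y_s)R^Z_{s,t}+DF(Y_s)F(Y_s)\mathbb Z_{s,t}+F_0(Y_s)(t-s)+R^{Y,\mathcal Z}_{s,t}$. It holds only if $R^{Y,\mathcal Z}$ is the third-order remainder of the local expansion. The RDE stability theorem, however, controls the $\mathcal Z$-controlled remainder $Y_{s,t}-F(Y_s)Z_{s,t}$. It is cleaner to write $R^Y_{s,t}=F(Y_s)R^Z_{s,t}+\bigl(Y_{s,t}-F(Y_s)Z_{s,t}\bigr)$ and estimate the two differences directly.
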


\begin{proof}
\eqref{it-unique}. It follows from Theorem~\ref{thm:solution-equivalence} and the classical drift-containing rough path theory~\cite{FH20,FV10b}.

\eqref{it-contin}.
We follow the argument of~\cite[Theorem~4.1]{LiGao2025}.
Take 
\begin{align*}
\mathbf Y=&\ (Y_0+\sint_0^\cdot F(Y_{r})d{\bf Z}_{r}+\sint_0^\cdot F_0(Y_r)dr,F(Y)Z'), \\
\widetilde{\bf Y}=&\ (\widetilde{Y}_{0}+\sint_0^\cdot F(\widetilde{Y}_r)d\widetilde{\bf Z}_r+\sint_0^\cdot F_0(\widetilde{Y}_r)dr, F(\widetilde{Y})\widetilde{Z}').
\end{align*}
Carrying out direct estimate computations as in the cited argument, we complete the proof.
\end{proof}

\subsection{The solution cocycle of random drift-containing CRDEs}
\label{sec:solution-cocycle}
We now pass from the pathwise theory to the random setting. Assuming that the controlled driver is an ${\bf X}$-controlled rough path cocycle, we use its shift properties together with pathwise uniqueness to show that the time-$t$ solution maps generate a measurable solution cocycle.

Let $(\Omega,\mathcal F,\mathbb P,\{\theta_t\}_{t\in\mathbb R})$
be an invertible measure-preserving dynamical system.
Let ${\bf X}=(X,\mathbb X)$ be a rough path cocycle and
${\bf Z}=(Z,Z')$ an ${\bf X}$-controlled rough path cocycle
in the sense of Definition~\ref{def:prelim-rough-path-cocycle}.
Thus, for each $\omega\in\Omega$,
\[
{\bf X}(\omega)
=
\bigl(X(\omega),\mathbb X(\omega)\bigr)
\in\mathcal D^\alpha(\mathbb R,V),
\qquad
{\bf Z}(\omega)
=
\bigl(Z(\omega),Z'(\omega)\bigr)
\in\mathcal C_{{\bf X}(\omega)}^\alpha(\mathbb R,U).
\]
Assume that
\[
F\in C_b^3(W,\mathcal L(U,W)),
\qquad
F_0\in C_b^1(W,W).
\]
For each $\omega\in\Omega$, we consider the random version of~\meqref{main-integ-CRDE}
\begin{equation}
\label{eq:solution-map-equation}
Y_t(\omega)
=
Y_0(\omega)
+
\sint_0^t F(Y_r(\omega))\,d{\bf Z}_r(\omega)
+
\sint_0^t F_0(Y_r (\omega))\,dr,
\qquad t\ge0,
\end{equation}
with pathwise initial value $Y_0 (\omega)\in W$.
By Theorem~\ref{thm:solution-pathwise-wellposedness}, there is a unique controlled solution
\[
{\bf Y} (\omega)
=
\bigl(Y (\omega),Y'(\omega)\bigr)
\in
\mathcal C_{{\bf X}(\omega)}^\alpha(\mathbb R_{\ge0},W),
\]
with
$
Y'_t(\omega)
=
F(Y_t (\omega))Z'_t(\omega).
$
For fixed $\omega\in\Omega$, we define the pathwise solution map by
\[
\phi^\omega:
W\rightarrow
\mathcal C_{{\bf X}(\omega)}^\alpha(\mathbb R_{\ge0},W),
\qquad
Y_0(\omega)\mapsto
\phi^\omega(Y_0(\omega))
:=
{\bf Y} (\omega).
\]
We distinguish this map from the time-$t$ solution map
\[
\phi_t^\omega:
W \rightarrow W\times \mathcal L(V,W),
\qquad
Y_0(\omega)\mapsto
\phi_t^\omega(Y_0(\omega)):= {\bf Y}_t (\omega).
\]
For the pathwise initial condition $Y_0(\omega)\in W$ in~\eqref{eq:solution-map-equation}, the associated pathwise initial condition is 
$${\bf Y}_0(\omega):=(Y_0(\omega),F(Y_0(\omega))Z'_0(\omega))\in W\times \mathcal L(V,W)
$$ for the solution ${\bf Y}(\omega)\in\mathcal C_{\bf X(\omega)}^\alpha(\mathbb R,W)$. 
Thus, one has the process 
\begin{align}\label{random-sol-integ-CRDE}
\phi_\cdot^\cdot({\bf Y}_0):\mathbb R\times \Omega\to W\times \mathcal L(V,W),\qquad (t,\omega)\mapsto \phi_t^\omega({\bf Y}_0(\omega)):= \phi_t^\omega( Y_0(\omega)).
\end{align}

To establish the cocycle property of the time-$t$ solution maps, we first
record how the controlled rough integral and the drift integral behave
under a time shift.

\begin{lemma}\label{lemma-def-tilde-Y}
With the above setting and for any fixed $s\in\mathbb R$, define 
\begin{align*}
\widetilde{\bf Y}(\omega)=(\widetilde{Y}(\omega),\widetilde{Y}'(\omega)):\mathbb R\to W\times \mathcal L(V,W), \qquad r\mapsto \widetilde{\bf Y}_r(\omega):={\bf Y}_{s+r}(\omega).
\end{align*}  
Then  
\begin{enumerate}
\item~\label{it:cocycle-F} For $F\in C_{b}^{3}(W,\mathcal L(U,W)),$
\begin{equation*}
\sint_s^{s+t} F(Y_q(\omega))\,d{\bf Z}_q(\omega)
=
\sint_0^t F(\widetilde Y_r(\omega))\,d{\bf Z}_r(\theta_s\omega).
\end{equation*}
\mlabel{lem:solution-shift-controlled-integral}

\item~\label{it:cocycle-F0} For $F_0\in\mathcal L(W,W),$ 
\begin{equation*}
\sint_s^{s+t}F_0(Y_q(\omega))\,dq
=
\sint_0^tF_0(\widetilde Y_r(\omega))\,dr.
\end{equation*}
\end{enumerate}
\end{lemma}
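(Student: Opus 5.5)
Both identities come from writing each side as a limit of Riemann sums over corresponding partitions and comparing the summands term by term. Part~\eqref{it:cocycle-F0} needs no shift of $\omega$, because the drift integral is an ordinary Lebesgue (or Riemann) integral in time. Substituting $q=s+r$ in $\sint_s^{s+t}F_0(Y_q(\omega))\,dq$ gives $\sint_0^t F_0(Y_{s+r}(\omega))\,dr=\sint_0^tF_0(\widetilde Y_r(\omega))\,dr$ directly from the definition of $\widetilde Y$. The map $r\mapsto F_0(Y_{s+r}(\omega))$ is continuous, since $Y$ is H\"older and $F_0$ is continuous, so the integral is well defined and the change of variables is elementary.

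For part~\eqref{it:cocycle-F}, I would use Lemma~\ref{thm:prelim-controlled-integral} to express the left side as
\[
\lim_{|\mathcal P|\to0}\sum_{[u,v]\in\mathcal P}\Bigl(F(Y_u(\omega))Z_{u,v}(\omega)+DF(Y_u(\omega))Y'_u(\omega)Z'_u(\omega)\mathbb X_{u,v}(\omega)\Bigr),
\]
where $\mathcal P$ ranges over partitions of $[s,s+t]$. The Gubinelli derivative of the integrand $F(Y)$ is $DF(Y)Y'$, as in the proof of Theorem~\ref{thm:solution-equivalence}. Each partition $\mathcal P$ corresponds bijectively to the partition $\mathcal P-s$ of $[0,t]$, which has the same mesh. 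Writing $u=s+a$ and $v=s+b$, the cocycle relations~\eqref{eq:cocycle-relation} and~\eqref{shift-eq} give, after splitting $Z_{s+a,s+b}=Z_{s,s+b}-Z_{s,s+a}$,
\[
Z_{u,v}(\omega)=Z_{a,b}(\theta_s\omega),\qquad Z'_u(\omega)=Z'_a(\theta_s\omega),\qquad \mathbb X_{u,v}(\omega)=\mathbb X_{a,b}(\theta_s\omega).
\]
For the last identity I would write $\mathbb X_{s+a,s+b}(\omega)=\mathbb X_{0,b-a}(\theta_{s+a}\omega)=\mathbb X_{a,b}(\theta_s\omega)$, applying the cocycle relation twice together with $\theta_{s+a}=\theta_a\circ\theta_s$. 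The same argument handles $X$ and $Z$ directly.

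On the integrand side, the definition of $\widetilde{\bf Y}$ gives $Y_u(\omega)=\widetilde Y_a(\omega)$ and $Y'_u(\omega)=\widetilde Y'_a(\omega)$. Hence every summand over $\mathcal P$ equals the corresponding summand for $\sint_0^t F(\widetilde Y_r(\omega))\,d{\bf Z}_r(\theta_s\omega)$, computed with the integrand $(F(\widetilde Y),DF(\widetilde Y)\widetilde Y')$ over $\mathcal P-s$. Passing to the limit then gives the identity, since the limits exist and do not depend on the choice of partitions.

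The main obstacle is justifying that the right-hand integral is well defined. For this, $(F(\widetilde Y(\omega)),DF(\widetilde Y(\omega))\widetilde Y'(\omega))$ must be a ${\bf X}(\theta_s\omega)$-controlled rough path, and ${\bf Z}(\theta_s\omega)$ must be controlled by ${\bf X}(\theta_s\omega)$. The second condition holds by Definition~\ref{def:prelim-rough-path-cocycle}. For the first, I would check the remainder of $\widetilde Y$ in the shifted environment using Proposition~\ref{prop:lift-remainder-shift}. By the shift identities,
\[
\widetilde Y_{a,b}(\omega)-\widetilde Y'_a(\omega)X_{a,b}(\theta_s\omega)=Y_{u,v}(\omega)-Y'_u(\omega)X_{u,v}(\omega)=R^Y_{u,v}(\omega),
\]
which has the same $2\alpha$-H\"older bound as $R^Y$, and $\widetilde Y'$ has the same $\alpha$-H\"older norm as $Y'$. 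The controlledness of $F(\widetilde Y)$ then follows from $F\in C_b^3$ in the usual way.
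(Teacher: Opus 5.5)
Your proposal is correct and follows the paper's argument: shift a partition of $[0,t]$ to $[s,s+t]$, use the cocycle relations \eqref{eq:cocycle-relation} and \eqref{shift-eq} to match the compensated Riemann sums term by term, pass to the limit, and handle the drift by the change of variables $q=s+r$. You are in fact more careful than the paper in three places. You derive $Z_{s+a,s+b}(\omega)=Z_{a,b}(\theta_s\omega)$ and $\mathbb X_{s+a,s+b}(\omega)=\mathbb X_{a,b}(\theta_s\omega)$ from the one-parameter relations via $\theta_{s+a}=\theta_a\circ\theta_s$, you make the Gubinelli derivative $DF(Y)Y'$ explicit, and you check that $F(\widetilde Y)$ is ${\bf X}(\theta_s\omega)$-controlled (that check uses only the shift of $X$, not Proposition~\ref{prop:lift-remainder-shift}, which concerns $R^Z$).
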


\begin{proof}
By symmetry, we assume without loss of generality that $t\in \mathbb R_{\ge 0}$. 

\eqref{it:cocycle-F}. Let
\[ P: \; 0=r_0<r_1<\cdots<r_N=t
\]
be a partition of $[0,t]$.
The shifted partition of $[s,s+t]$ is
\[
s+r_0<s+r_1<\cdots<s+r_N.
\]
Then  
\begin{align*}
  \sint_s^{s+t} F(Y_q(\omega))\,d{\bf Z}_q(\omega)\overset{\eqref{eq:prelim-controlled-integral}}{=}& \lim_{|P|\to 0}\sum_{i=0}^{N-1} F(Y_{s+r_{i}}(\omega))Z_{s+r_{i},s+r_{i+1}}(\omega)+(F(Y(\omega)))'_{s+r_{i}}Z'_{s+r_{i}}(\omega)\mathbb{X}_{s+r_{i},s+r_{i+1}}(\omega)\\
  \overset{\eqref{shift-eq}}{=}& \lim_{|P|\to 0} \sum_{i=0}^{N-1} F(Y_{s+r_{i}}(\omega))Z_{r_{i},r_{i+1}}(\theta_{s}\omega)+F(Y_{s+r_{i}}(\omega))'Z'_{r_{i}}(\theta_{s}\omega)\mathbb{X}_{s+r_{i},s+r_{i+1}}(\omega)\\
  \overset{\eqref{eq:cocycle-relation}}{=}& \lim_{|P|\to 0}\sum_{i=0}^{N-1} F(\widetilde{Y}_{r_{i}}(\omega))Z_{r_{i},r_{i+1}}(\theta_{s}\omega)+F(\widetilde{Y}_{r_{i}}(\omega))'Z'_{r_{i}}(\theta_{s}\omega)\mathbb{X}_{r_{i},r_{i+1}}(\theta_{s}\omega)\\
  \overset{\eqref{eq:prelim-controlled-integral}}{=}& \sint_{0}^{t}F(\widetilde{Y}_{r}(\omega))d{\bf Z}_{r}(\theta_{s}\omega). 
\end{align*}

\eqref{it:cocycle-F0}. 
It follows from the ordinary change of variables $q=s+r$:
\[
\sint_s^{s+t}F_0(Y_q(\omega))\,dq
=
\sint_0^tF_0(Y_{s+r}(\omega))\,dr
=
\sint_0^tF_0(\widetilde Y_r(\omega))\,dr.
\]
This completes the proof.
\end{proof}

Next, we consider the cocycle property of the solution to~\eqref{eq:solution-map-equation} under the pathwise initial condition $Y_0(\omega)$.

\begin{theorem}
\label{thm:solution-cocycle}
Let $(\Omega,\mathcal F,\mathbb P,\{\theta_t\}_{t\in\mathbb R})$ be an invertible measure-preserving dynamical system, and let ${\bf Z}$ be an ${\bf X}$-controlled rough path cocycle in the sense of Definition~\ref{def:prelim-rough-path-cocycle}.
If $F\in C_b^3(W,\mathcal L(U,W))$ and $F_0\in C_b^1(W,W)$, then the map 
\begin{equation}
\phi:\mathbb R\times\Omega\times W\times \mathcal L(V,W)\to W\times \mathcal L(V,W) , \qquad \Big(t,\omega,{\bf Y}_{0}(\omega)\Big)\mapsto \phi_{t}^{\omega}({\bf Y}_{0}(\omega))
\mlabel{eq:solcocycle}
\end{equation}
given in \eqref{random-sol-integ-CRDE} is a measurable cocycle in the sense of Definition~\ref{def:prelim-measurable-cocycle}~\eqref{it-def:prelim-measurable-cocycle}. In this case, we refer to $\phi$ as the solution cocycle for~\eqref{eq:solution-map-equation}.
\end{theorem}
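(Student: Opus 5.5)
The plan has two parts: the cocycle identity $\phi_{t+s}^\omega=\phi_t^{\theta_s\omega}\circ\phi_s^\omega$, and measurability of $\phi$. Both rest on the pathwise uniqueness in Theorem~\ref{thm:solution-pathwise-wellposedness}~\eqref{it-unique} and on the shift identities of Lemma~\ref{lemma-def-tilde-Y}.

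\textbf{Cocycle identity.} Fix $s\ge0$, $\omega\in\Omega$ and an initial value $Y_0\in W$, and let ${\bf Y}(\omega)=\phi^\omega(Y_0)$ be the unique solution. Define the shifted path $\widetilde{\bf Y}_r(\omega):={\bf Y}_{s+r}(\omega)$ as in Lemma~\ref{lemma-def-tilde-Y}.

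Writing the integral equation \eqref{eq:solution-map-equation} between times $s$ and $s+t$ gives
\[
Y_{s+t}(\omega)=Y_s(\omega)+\sint_s^{s+t}F(Y_q(\omega))\,d{\bf Z}_q(\omega)+\sint_s^{s+t}F_0(Y_q(\omega))\,dq .
\]
By Lemma~\ref{lemma-def-tilde-Y} this becomes
\[
\widetilde Y_t(\omega)=\widetilde Y_0(\omega)+\sint_0^tF(\widetilde Y_r(\omega))\,d{\bf Z}_r(\theta_s\omega)+\sint_0^tF_0(\widetilde Y_r(\omega))\,dr .
\]
For the Gubinelli derivative, \eqref{shift-eq} gives
\[
\widetilde Y'_r(\omega)=F(Y_{s+r}(\omega))Z'_{s+r}(\omega)=F(\widetilde Y_r(\omega))Z'_r(\theta_s\omega).
\]
It remains to check that $\widetilde{\bf Y}(\omega)$ is ${\bf X}(\theta_s\omega)$-controlled. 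Its remainder is
\[
R^{\widetilde Y}_{u,v}=\widetilde Y_{u,v}-\widetilde Y'_uX_{u,v}(\theta_s\omega)=Y_{s+u,s+v}(\omega)-Y'_{s+u}(\omega)X_{s+u,s+v}(\omega)=R^Y_{s+u,s+v}(\omega),
\]
using \eqref{eq:cocycle-relation}, in the same way as Proposition~\ref{prop:lift-remainder-shift}. So the Hölder norms transfer, and $\widetilde{\bf Y}(\omega)$ is a solution of the CRDE driven by ${\bf Z}(\theta_s\omega)$ with initial value $Y_s(\omega)$.

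Uniqueness then gives $\widetilde{\bf Y}(\omega)=\phi^{\theta_s\omega}(Y_s(\omega))$. Evaluating at time $t$ yields $\phi^\omega_{s+t}(Y_0)=\phi^{\theta_s\omega}_t(\phi^\omega_s(Y_0))$. The second component of a state is always $F(Y)Z'$, so the map acts through its first component and composition is meaningful on $W\times\mathcal L(V,W)$.

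For negative times, I would follow the paper's convention that only $t,s\ge0$ are substantive. Alternatively, solutions extend to $\mathbb R$ because the equation is solvable backward on each finite interval with the same estimates.

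\textbf{Measurability.} This is the main obstacle. The plan is:
\begin{itemize}
\item Fix $T$. Assume, as is implicit in the setting, that $\omega\mapsto({\bf X}(\omega)|_{[0,T]},{\bf Z}(\omega)|_{[0,T]})$ is measurable into the relevant Hölder-type spaces.
\item Theorem~\ref{thm:solution-pathwise-wellposedness}~\eqref{it-contin} shows that the solution map $({\bf X},{\bf Z},Y_0)\mapsto{\bf Y}$ is locally Lipschitz, hence continuous.
\item It follows that $(\omega,Y_0)\mapsto\phi_t^\omega(Y_0)$ is measurable for each fixed $t$.
\item Each solution path is continuous in $t$, so $(t,\omega,Y_0)\mapsto\phi_t^\omega(Y_0)$ is a Carathéodory map: continuous in $(t,Y_0)$ and measurable in $\omega$. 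Such a map is jointly $\mathcal B(\mathbb R)\otimes\mathcal F\otimes\mathcal B(W)$-measurable.
\end{itemize}

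The delicate point is that continuity from \eqref{it-contin} is stated in the distance $d$ together with initial data. To get the sup-norm continuity of $Y_t$ and $Y'_t$, one has to combine these quantities, bounding
\[
\|Y_t-\widetilde Y_t\|\le\|Y_0-\widetilde Y_0\|+\|Y'_0\|\,\|X-\widetilde X\|+\cdots
\]
through the controlled expansion over $[0,t]$.
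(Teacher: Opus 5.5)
Your proof of the cocycle identity is the paper's argument: shift the solution by $s$, use Lemma~\ref{lemma-def-tilde-Y} to rewrite both integrals in the environment $\theta_s\omega$, and conclude by pathwise uniqueness. You are more careful than the paper at this step. You verify that $\widetilde{\bf Y}(\omega)$ has Gubinelli derivative $F(\widetilde Y)Z'(\theta_s\omega)$ and remainder $R^{\widetilde Y}_{u,v}=R^Y_{s+u,s+v}(\omega)$. Hence it genuinely lies in $\mathcal C^\alpha_{{\bf X}(\theta_s\omega)}$, the class in which uniqueness holds; the paper only asserts this.

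The real difference is measurability. The paper disposes of it in one line (``it is a random process''). You instead give an actual argument. You use continuity of the solution map from Theorem~\ref{thm:solution-pathwise-wellposedness}~\eqref{it-contin}, upgraded to sup-norm continuity through $Y_t=Y_0+Y'_0X_{0,t}+R^Y_{0,t}$ and $Y'_t=Y'_0+Y'_{0,t}$. You then apply Carath\'eodory joint measurability over the separable parameter space $\mathbb R\times W$; dependence on the $\mathcal L(V,W)$-component is trivial.

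This buys a genuine proof, but all its weight rests on the hypothesis you add: that $\omega\mapsto({\bf X}(\omega),{\bf Z}(\omega))$ is Borel measurable into the H\"older spaces. Because those spaces are non-separable, this does not follow from measurability of the individual quantities $X_{s,t},\mathbb X_{s,t},Z_t,Z'_t$. If you want to derive it rather than assume it, there are two routes. One is to work with an exponent $\alpha'\in(1/3,\alpha)$: $\alpha$-H\"older data then lie in a separable space, and the solution map remains continuous in the $\alpha'$-topology. The other is to realize $\phi_t^\omega(Y_0)$ as a pointwise limit of measurable discrete schemes.
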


\begin{proof}
The measurability of $\phi_\cdot^\cdot(Y_0)$  follows from it being a random process.
It remains to prove the cocycle identity:
\begin{equation}
\label{eq:solution-cocycle}
\phi_{t+s}^\omega({\bf Y}_0(\omega))
=
\phi_t^{\theta_s\omega}
\bigl(
\phi_s^\omega({\bf Y}_0(\omega))
\bigr),
\qquad
\forall s, t\in \mathbb R.
\end{equation}
For any $r\in[0,T]$, set 
\[{\bf Y}_r(\omega):=\phi_r^\omega ({\bf Y}_0(\omega)),\qquad \widetilde{\bf Y}_r(\omega):={\bf Y}_{s+r}(\omega).\]
Then 
\begin{align*}
 \phi_{s+t}^{\omega}({\bf Y}_0(\omega))=&\ {\bf Y}_{s+t}(\omega)=\widetilde{\bf Y}_{t}(\omega),\\
  \phi_{t}^{\theta_{s}\omega}(\phi_{s}^{\omega}({\bf Y}_0(\omega)))=&\ \phi_{t}^{\theta_{s}\omega}({\bf Y}_{s}(\omega))
  =\phi_{t}^{\theta_{s}\omega}(\widetilde{\bf Y}_{0}(\omega)).
  \end{align*}
It follows from~\eqref{eq:solution-map-equation} that 
\[
Y_{s+t}(\omega)
=
Y_s(\omega)
+
\sint_s^{s+t}F(Y_q(\omega))\,d{\bf Z}_q(\omega)
+
\sint_s^{s+t}F_0(Y_q(\omega))\,dq.
\]
By Lemma~\ref{lemma-def-tilde-Y}, 
  \[\widetilde{Y}_{r}(\omega)=Y_{s}(\omega)+\sint_{0}^{r}F(\widetilde{Y}_{q}(\omega))d{\bf Z}_{q}(\theta_{s}\omega)+\sint_{0}^{r}F_{0}(\widetilde{Y}_{q}(\omega))dq,\]
namely, $\widetilde {\bf Y}(\omega)\in \mathcal C_{\bf X(\theta_s\omega)}^\alpha(\mathbb R,W)$ solves the \eqref{eq:solution-map-equation} in the shifted environment $\theta_s\omega$ with the associated pathwise initial condition
\[
\widetilde{\bf Y}_0(\omega)={\bf Y}_s(\omega)=\phi_s^\omega({\bf Y}_0(\omega)).
\]
Thus, $\phi_{t}^{\theta_{s}\omega}(\widetilde{\bf Y}_{0}(\omega))=\widetilde{\bf Y}_{t}(\omega)$, and so 
\[\phi_{t+s}^{\omega}({\bf Y}_0(\omega))=\widetilde{\bf Y}_{t}(\omega)=\phi_{t}^{\theta_{s}\omega}(\widetilde{\bf Y}_{0}(\omega))=\phi_{t}^{\theta_{s}\omega}(\phi_{s}^{\omega}
({\bf Y}_0(\omega))).\]
This completes the proof.
\end{proof}

To prepare for the study of the Lyapunov exponents, we now consider the differentiability of the solution to~\eqref{eq:solution-map-equation} with respect to its associated pathwise initial condition. 
To start with, we consider the $C^k$-regularity of the solution.

\begin{proposition}\label{thm:solution-Ck-regularity} 
For $k\in\mathbb{N}$, assume that $F\in C_{b}^{k+3}(W,\mathcal L(U,W))$ and $F_{0}\in C_{b}^{k+1}(W,W)$. 
Then, for every fixed $(t,\omega)$, the map $\phi_t^\omega$ given in~\eqref{random-sol-integ-CRDE} belongs to $C^k(W\times\mathcal L(V,W),W\times\mathcal L(V,W))$. Moreover, the Fr\'echet derivative $D_{y}\phi_t^\omega$ is the solution to the variational equation
\begin{equation}
\label{eq:solution-variational-equation-general}
d\xi_t
=
DF(\phi_t^{\omega}(y))\xi_t\,d{\bf Z}_t(\omega)
+
DF_0(\phi_t^{\omega}(y))\xi_t\,dt,
\qquad
\xi_0=\xi.
\end{equation}
\end{proposition}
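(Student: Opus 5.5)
The plan is to transfer the problem to a classical RDE via Theorem~\ref{thm:solution-equivalence} and then invoke the known differentiability theory for RDEs with drift. Fix $\omega$ and a time horizon $T\ge t$. By Lemma~\ref{thm:lift-canonical-lift-rough-path}, the canonical lift $\mathcal Z(\omega)=(Z(\omega),\mathbb Z(\omega))$ is an $\alpha$-H\"older rough path on $[0,T]$. By Theorem~\ref{thm:solution-equivalence}, the first component $Y_t$ of $\phi_t^\omega(y)$ coincides with the solution flow $\Psi_t^\omega$ of the classical drift-containing RDE~\eqref{eq:Z-RDE} driven by $\mathcal Z(\omega)$ with initial value $y$.

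The second component of $\phi_t^\omega$ is $F(Y_t)Z'_t(\omega)$. Since $Z'_t(\omega)$ is fixed, it is the composition of $\Psi_t^\omega$ with the map $w\mapsto F(w)Z'_t(\omega)$, which is $C^{k+3}$. The initial datum ${\bf Y}_0=(Y_0,F(Y_0)Z'_0)$ is determined by $Y_0$, so $\phi_t^\omega$ factors through the projection onto the $W$-component. Hence $C^k$-regularity of $\phi_t^\omega$ on $W\times\mathcal L(V,W)$ reduces to $C^k$-regularity of $y\mapsto\Psi_t^\omega(y)$. By the chain rule, $D\phi_t^\omega$ is determined by $D\Psi_t^\omega$.

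For the RDE flow, I would invoke the standard result that $F\in C_b^{k+3}$ (that is, $\mathrm{Lip}^{\gamma+k-1}$ with $\gamma>1/\alpha$) and $F_0\in C_b^{k+1}$ yield a $C^k$ flow; see~\cite{FV10b}, Chapter~11, or~\cite{FH20}. To keep the argument self-contained, I would instead prove it by induction on $k$. For $k=1$, consider the difference quotient $\eta^h=(\Psi^\omega(y+h\xi)-\Psi^\omega(y))/h$. It solves a linear RDE driven by $\mathcal Z$ whose coefficients are
\[
\sint_0^1 DF\bigl(Y+\lambda(\widetilde Y-Y)\bigr)\,d\lambda
\quad\text{and}\quad
\sint_0^1 DF_0\bigl(Y+\lambda(\widetilde Y-Y)\bigr)\,d\lambda.
\]
The continuity estimate of Theorem~\ref{thm:solution-pathwise-wellposedness}~\eqref{it-contin}, transported through the equivalence, shows that these coefficients converge in the controlled-path norm as $h\to0$. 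Stability of linear RDEs then gives convergence of $\eta^h$ to the solution $\xi$ of the variational equation.

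Using Theorem~\ref{thm:solution-equivalence} once more, now applied to the linear system, and the identity of the integrals $\sint F\,d{\bf Z}=\sint F\,d\mathcal Z$ established in its proof, the $\mathcal Z$-driven linear equation is equivalent to~\eqref{eq:solution-variational-equation-general} driven by ${\bf Z}$. The Gubinelli derivative is $(D\xi)'=DF(Y)\xi\, Z'$. Continuity of $y\mapsto D\Psi^\omega(y)$ follows from the same stability applied to the coupled system $(Y,\xi)$. Higher derivatives are handled by applying the $k=1$ step to the enlarged system $(Y,\xi,\dots)$, whose vector fields lose one derivative at each step. This is why $F\in C_b^{k+3}$ and $F_0\in C_b^{k+1}$ suffice.

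The main obstacle is the linear-growth issue. The variational equation has unbounded linear vector fields, so the global existence and uniqueness in Theorem~\ref{thm:solution-pathwise-wellposedness} (which requires $C_b$ coefficients) does not directly apply. I would first handle this with the standard a priori estimate for linear RDEs: a greedy partition of $[0,T]$ into intervals on which $\|\mathcal Z\|_{\alpha}|I|^{\alpha}$ is small, combined with a Gronwall-type concatenation. This yields existence, uniqueness and an exponential bound on $\xi$, and the induction on $k$ then runs on that basis.
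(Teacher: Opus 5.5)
Your proposal is correct and follows essentially the same route as the paper, which likewise passes to the classical RDE driven by the canonical lift $\mathcal Z$ via Theorem~\ref{thm:solution-equivalence}, cites \cite{FH20,FV10b} for the $C^k$ flow, and then reads off the variational equation by differentiating the integral equation. Your extra points fill in details the paper leaves implicit: that $\phi_t^\omega$ depends only on the $W$-component of the initial datum, so the variational equation governs the first component while the second is $DF(Y_t)[\eta_t]Z'_t$ by the chain rule; the justification of the differentiation through difference quotients and the identity $\sint F(Y)\,d{\bf Z}=\sint F(Y)\,d\mathcal Z$; and the a priori bound for linear RDEs with unbounded coefficients.
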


\begin{proof}
By Theorem~\ref{thm:solution-equivalence} and~\cite{FH20,FV10b}, the map $\phi_t^\omega$ is in  $C^k(W\times \mathcal L(V,W),W\times \mathcal L(V,W))$. 
For $k\in\mathbb{N}$, differentiating the integral equation
\[
\phi_t^\omega(y)
=
y
+
\sint_0^tF(\phi_r^\omega(y))\,d{\bf Z}_r(\omega)
+
\sint_0^tF_0(\phi_r^\omega(y))\,dr
\]
with respect to $y\in W\times \mathcal L(V,W)$ in the direction $\xi$, we get
\[
D_y\phi_t^\omega[\xi]
=
\xi
+
\sint_0^tDF(\phi_r^\omega(y))D_y\phi_r^\omega[\xi]\,d{\bf Z}_r(\omega)
+
\sint_0^tDF_0(\phi_r^\omega(y))D_y\phi_r^\omega[\xi]\,dr,
\]
which is exactly~\eqref{eq:solution-variational-equation-general}.
\end{proof}

This regularity of the time-$t$ solution maps can be incorporated into the random dynamical systems framework as follows.

\begin{corollary} 
\label{cor:solution-Ck-cocycle}
Under the assumptions of Theorem~\ref{thm:solution-cocycle} and Proposition~\ref{thm:solution-Ck-regularity}, the map \[
\phi:\mathbb R\times\Omega\times W\times \mathcal L(V,W)\to W\times \mathcal L(V,W) , \qquad (t,\omega,{\bf Y}_{0}(\omega))\mapsto \phi_{t}^{\omega}({\bf Y}_{0}(\omega))
\]
is a measurable $C^k$-cocycle in the sense of Definition~\ref{def:prelim-measurable-cocycle}~\eqref{it-def:prelim-Ck-cocycle}.
\end{corollary}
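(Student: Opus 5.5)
The statement combines two facts already established, so I will prove it by checking the two requirements of Definition~\ref{def:prelim-measurable-cocycle}~\eqref{it-def:prelim-Ck-cocycle} separately.

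\textbf{Step 1: the cocycle property.} Since $F\in C_b^{k+3}\subset C_b^3$ and $F_0\in C_b^{k+1}\subset C_b^1$, the hypotheses of Theorem~\ref{thm:solution-cocycle} hold. That theorem shows that $\phi$ is a measurable cocycle:
\[
\phi^\omega_{t+s}=\phi^{\theta_s\omega}_t\circ\phi^\omega_s .
\]
Its proof rests on the shift identities of Lemma~\ref{lemma-def-tilde-Y} and on pathwise uniqueness from Theorem~\ref{thm:solution-pathwise-wellposedness}. I will quote it as it stands.

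\textbf{Step 2: $C^k$ regularity.} For each fixed $(t,\omega)$, Proposition~\ref{thm:solution-Ck-regularity} gives
\[
\phi_t^\omega\in C^k\bigl(W\times\mathcal L(V,W),\,W\times\mathcal L(V,W)\bigr).
\]
This is exactly the extra regularity required of a $C^k$-cocycle. Combining the two steps proves the corollary.

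\textbf{Main obstacle: joint measurability.} The delicate point is that $\phi$ should be jointly measurable in $(t,\omega,y)$, not merely a random process for each fixed initial value. I would argue in three parts.

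\begin{enumerate}
\item \emph{Continuity in $(t,y)$ for fixed $\omega$.} Continuity in $y$ follows from the $C^k$ property, or directly from the continuity estimate in Theorem~\ref{thm:solution-pathwise-wellposedness}~\eqref{it-contin} with $\widetilde{\bf X}={\bf X}$ and $\widetilde{\bf Z}={\bf Z}$. Continuity in $t$ holds because solutions are H\"older paths, and the local bound is uniform for $y$ in compact sets.
\item \emph{Measurability in $\omega$ for fixed $(t,y)$.} By Theorem~\ref{thm:solution-pathwise-wellposedness}~\eqref{it-contin}, the solution on a finite window depends continuously on the driving data $({\bf X}(\omega),{\bf Z}(\omega))$ restricted to $[0,T]$. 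This restriction is measurable in $\omega$ when $\omega\mapsto({\bf X}(\omega),{\bf Z}(\omega))$ is measurable, which I take as implicit in calling them random processes. Composing a measurable map with a continuous one gives measurability in $\omega$.
\item \emph{Conclusion.} A Carath\'eodory-type map, continuous in $(t,y)$ and measurable in $\omega$ on separable finite-dimensional spaces, is jointly measurable.
\end{enumerate}

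If any separability issue arises in part~2, I would restrict $t$ to rationals and extend by continuity in $t$.
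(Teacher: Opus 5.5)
Your proposal is correct and follows the paper's proof, which likewise invokes Theorem~\ref{thm:solution-cocycle} for the cocycle identity and measurability, and Proposition~\ref{thm:solution-Ck-regularity} for the $C^k$ regularity of each $\phi_t^\omega$. Your Carath\'eodory argument for joint measurability in $(t,\omega,y)$ fills in a point the paper only asserts (it says measurability follows because $\phi_\cdot^\cdot({\bf Y}_0)$ is a random process); it does, however, rely on your added assumption that $\omega\mapsto({\bf X}(\omega),{\bf Z}(\omega))$ is measurable as path-space-valued data, and since the H\"older spaces are non-separable this is most cleanly justified by writing the solution at time $t$ as a pointwise limit of discretisation schemes that depend on finitely many values of the driver.
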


\begin{proof}
The cocycle property and measurability follow from Theorem~\ref{thm:solution-cocycle}.
The $C^k$-regularity follows from Proposition~\ref{thm:solution-Ck-regularity}.
\end{proof}

\section{Stationary points and deterministic derivative estimates}
\label{sec:stationary-points}
In this section, we study the dynamics near a stationary point of the solution cocycle. We first introduce the corresponding linearized cocycle and establish the deterministic derivative estimates required for its integrability. We then derive nonlinear regularity estimates that will be used in the construction of local invariant manifolds. Throughout this section, we fix a time step $t_0\in\mathbb R_{>0}$.

\subsection{Stationary points}\label{subsec:sp-stationary}
We begin with the stationary point that will serve as the reference orbit for the subsequent linearization. Let
$(\Omega,\mathcal F,\mathbb P,\{\theta_t\}_{t\in\mathbb R})$
be an invertible measure-preserving dynamical system, and let $\phi$ be the solution cocycle constructed in Theorem~\ref{thm:solution-cocycle}. We first specify the stationary point of $\phi$ and then study the linearized cocycle along this stationary orbit.

Adopting the same pull-back attraction method as in~\cite{Arnold1998,CF1994}, we obtain a unique stationary point $Y:\Omega\to W\times \mathcal L(V,W)$ by
\begin{align}
Y_\omega=\big(Y(\omega),F(Y(\omega))Z'(\omega)\big)={\bf Y}(\omega)=\lim_{t\to \infty}\phi(t,\theta_{-t}\omega,{\bf Y}_0(\omega))=\lim_{t\to \infty}\phi_t^{\theta_{-t}\omega}({\bf Y}_0(\omega)),
\mlabel{eq:fixpt}
\end{align}
where ${\bf Y}_0(\omega)$ and $\phi$ are the associated pathwise initial condition and the solution cocycle of the drift-containing CRDE~\eqref{main-diff-CRDE}, respectively. 
Moreover, the stationary point does not depend on the choice of the associated initial condition ${\bf Y}_0$. 
For any fixed $\omega\in \Omega$ and $t\in\mathbb R_{\ge 0}$, suppose that $$\phi_t^\omega\in C^1\big(W\times \mathcal L(V,W),W\times \mathcal L(V,W)\big),$$ 
and define the map  
\begin{equation}
\label{eq:solution-linearized-cocycle}
\psi_t^\omega:W\times \mathcal L(V,W)\to W\times \mathcal L(V,W), \qquad x\mapsto \psi_t^\omega(x):=
D_{Y_\omega}\phi_t^\omega(x).
\end{equation}

\begin{proposition}\label{prop:solution-linearized-cocycle}
With the above setting, then  
\begin{equation*}
\psi_{t+s}^\omega
=
\psi_t^{\theta_s\omega}
\circ
\psi_s^\omega,
\qquad\forall
s,t\in \mathbb{R}_{\ge 0}.
\end{equation*}
Moreover, for $\xi\in W\times \mathcal L(V,W)$, the path $$\eta:=\psi_\cdot^\omega\xi:\mathbb R_{\ge 0}\to W\times \mathcal L(V,W),\qquad t\mapsto D_{ Y_\omega}\phi_t^\omega(\xi)$$ satisfies
\begin{equation*}
d\eta_t
=
DF(Y_{\theta_t\omega})\eta_t\,d{\bf Z}_t(\omega)
+
DF_0(Y_{\theta_t\omega})\eta_t\,dt.
\end{equation*}
\end{proposition}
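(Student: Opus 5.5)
The proof splits into two parts: the cocycle identity for $\psi$, and the identification of the linear equation for $\eta$. Both rest on results already available, namely the $C^1$-cocycle from Corollary~\ref{cor:solution-Ck-cocycle}, the stationarity~\eqref{eq:stationary} of $Y_\omega$, and the variational equation from Proposition~\ref{thm:solution-Ck-regularity}.

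\textbf{Cocycle identity.} Fix $s,t\ge0$ and $\omega$. The starting point is the cocycle identity~\eqref{eq:solution-cocycle} of Theorem~\ref{thm:solution-cocycle}, read as an identity between maps on $W\times\mathcal L(V,W)$:
\[
\phi_{t+s}^\omega=\phi_t^{\theta_s\omega}\circ\phi_s^\omega .
\]
Both sides are $C^1$, so I differentiate at the point $Y_\omega$ using the chain rule:
\[
D_{Y_\omega}\phi_{t+s}^\omega
=D_{\phi_s^\omega(Y_\omega)}\phi_t^{\theta_s\omega}\circ D_{Y_\omega}\phi_s^\omega .
\]
Stationarity gives $\phi_s^\omega(Y_\omega)=Y_{\theta_s\omega}$. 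Hence the first factor equals $D_{Y_{\theta_s\omega}}\phi_t^{\theta_s\omega}$, which is $\psi_t^{\theta_s\omega}$ by~\eqref{eq:solution-linearized-cocycle}. This yields $\psi_{t+s}^\omega=\psi_t^{\theta_s\omega}\circ\psi_s^\omega$.

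\textbf{Linear equation for $\eta$.} Apply Proposition~\ref{thm:solution-Ck-regularity} with $y=Y_\omega$ and direction $\xi$. Then $\eta_t=D_{Y_\omega}\phi_t^\omega[\xi]$ solves~\eqref{eq:solution-variational-equation-general}, whose coefficients are $DF(\phi_t^\omega(Y_\omega))$ and $DF_0(\phi_t^\omega(Y_\omega))$. Stationarity again replaces $\phi_t^\omega(Y_\omega)$ by $Y_{\theta_t\omega}$ for every $t\ge0$, and this gives exactly the stated equation. The integrand should be understood through its first component, i.e.\ $DF(Y(\theta_t\omega))$ acting on the $W$-component of $\eta$, as in the variational equation.

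\textbf{Main obstacle.} The delicate point is to confirm that $t\mapsto DF(Y_{\theta_t\omega})\eta_t$ is a legitimate $\mathbf X(\omega)$-controlled integrand. This requires $t\mapsto Y_{\theta_t\omega}=\phi_t^\omega(Y_\omega)$ to be the controlled solution path from Theorem~\ref{thm:solution-pathwise-wellposedness}, which holds by construction. It also requires $\eta$ to be controlled with derivative $DF(Y_{\theta_\cdot\omega})\eta\,Z'$, which follows from Theorem~\ref{thm:solution-equivalence} applied to the joint system. A second point to check is that the chain rule is valid pointwise in $\omega$ for each fixed $(s,t)$; this needs only the $C^1$ property, which is assumed.
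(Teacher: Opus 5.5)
Your proposal is correct and follows the paper's proof. For the cocycle identity, you differentiate $\phi_{t+s}^\omega=\phi_t^{\theta_s\omega}\circ\phi_s^\omega$ at $Y_\omega$ by the chain rule and use $\phi_s^\omega(Y_\omega)=Y_{\theta_s\omega}$. For the equation, you specialize the variational equation of Proposition~\ref{thm:solution-Ck-regularity} to $y=Y_\omega$ and again replace $\phi_t^\omega(Y_\omega)$ by $Y_{\theta_t\omega}$.

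Your ``main obstacle'' paragraph is not needed: controlledness of $\eta$ with derivative $DF(Y_{\theta_\cdot\omega})\eta Z'$ is already part of what it means to solve the variational equation. In any case, Theorem~\ref{thm:solution-equivalence} could not be applied literally to the joint system, because its vector field $(y,\eta)\mapsto(F(y),DF(y)\eta)$ is unbounded and so not $C_b^2$.
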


\begin{proof}
First of all,
\begin{align*}
\psi_{t+s}^{\omega}=&\  D_{Y_{\omega}}\phi_{t+s}^{\omega}\hspace{1cm}(\text{by }~\eqref{eq:solution-linearized-cocycle})\\
=&\  D_{Y_{\omega}}(\phi_{t}^{\theta_{s}\omega}\circ \phi_{s}^{\omega})\hspace{1cm}(\text{by }~\eqref{eq:solution-cocycle})\\
=&\  D_{ Y_{\theta_{s}\omega}}\phi_{t}^{\theta_{s}\omega}\circ D_{Y_{\omega}}\phi_{s}^{\omega}\hspace{1cm}(\text{by the chain rule})\\
=&\  \psi_{t}^{\theta_{s}\omega}\circ \psi_{s}^{\omega}.\hspace{1cm}(\text{by }~\eqref{eq:solution-linearized-cocycle})
\end{align*}
Next, 
\begin{align*}
  \eta_{t}=&\ \psi_{t}^{\omega}(\xi)=D_{ Y_{\omega}}\phi_{t}^{\omega}(\xi)\hspace{1cm}(\text{by }~\eqref{eq:solution-linearized-cocycle})\\
  =&\  \xi+ \sint_{0}^{t} DF(\phi_{r}^{\omega}(Y_{\omega}))D_{ Y_{\omega}}\phi_{r}^{\omega}(\xi)d{\bf Z}_{r}(\omega)+\sint_{0}^{t}DF_{0}(\phi_{r}^{\omega}(Y_{\omega}))D_{Y_{\omega}}\phi_{r}^{\omega}(\xi)dr\hspace{1cm}(\text{by }~\eqref{eq:solution-variational-equation-general})\\
  =&\  \xi+\sint_{0}^{t}DF(Y_{\theta_{r}\omega})\eta_{r}d{\bf Z}_{r}(\omega)+\sint_{0}^{t}DF_{0}(Y_{\theta_{r}\omega})\eta_{r}dr.\hspace{1cm}(\text{by }\phi_{t}^{\omega}(Y_{\omega})=Y_{\theta_{t}\omega})
\end{align*}
Whence, \[d\eta_t
=
DF(Y_{\theta_t\omega})\eta_t\,d{\bf Z}_t(\omega)
+
DF_0(Y_{\theta_t\omega})\eta_t\,dt.\]
This completes the proof.
\end{proof}

The remainder of this subsection is devoted to preparing for the introduction of Lyapunov exponents and the proof of the existence of invariant manifolds. We begin by listing the quantitative inputs, including moment and smoothness conditions. 

\begin{assumption}\label{assump-F-integ}
We assume that the following conditions hold.
\begin{enumerate}
\item For $(F,F_0)$,  
\[F\in C_b^3(W,\mathcal L(U,W)),
\qquad
F_0\in C_b^1(W,W).\]
\item For the fixed time step $t_{0}\in \mathbb{R}_{>0}$,  
\[
\|\mathbf X(\omega)\|_{\alpha,[0,t_0]}
\in
\bigcap_{p\ge1}L^p(\Omega),\qquad 
\|{\bf Z(\omega)}\|_{\mathcal{C}_{\mathbf X(\omega)}^\alpha([0,t_0])}
\in
\bigcap_{p\ge1}L^p(\Omega).\]
\item The stationary point $Y$ given in~(\mref{eq:fixpt}) satisfies
\[ 
Y\in\bigcap_{p\ge1}L^p(\Omega;W\times \mathcal L(V,W)).
\]
\end{enumerate}
\end{assumption}

By the semiflow property of the solution map for the drift-containing CRDE~\eqref{main-diff-CRDE}, we have
\[(D_{z_{0}}\phi_{t}^{\omega})^{-1}=D_{\phi_{t}^{\omega}(z_{0})}\phi_{-t}^{\omega},\qquad \forall z_{0}\in W,\,t\in \mathbb{R}_{\ge0}.\]
Therefore, we can write
\[(\psi_{t}^{\omega})^{-1}:=D_{\phi_{t}^{\omega}(Y_{\omega})}\phi_{-t}^{\omega}.\]

\begin{remark}
Similar to~\cite[Remark~1.5]{GhaniVarzanehRiedel2025}, we have the following results.
\begin{enumerate}
\item If $\mathbf{Y}=(Y,Y')\in \mathcal{C}^\alpha_{\mathbf{X}}([0,T],W)$, then there exists a constant $C=C(\alpha,T)$ such that
\begin{align}
\|Y\|_{\alpha,[0,T]}
\le&\ 
C
\bigl(1+\|\mathbf X\|_{\alpha,[0,T]}\bigr)
\|{\bf Y}\|_{\mathcal{C}_{\bf X}^\alpha([0,T])}, \label{eq:prelim-Y-holder-bound}\\
\|{\bf Y}\|_{\mathcal{C}_{\bf X}^{\alpha}([a,c])}\le&\ (1+\|{\bf X}\|_{\alpha,[a,c]})\|{\bf Y}\|_{\mathcal{C}_{\bf X}^{\alpha}([a,b])}+\|{\bf Y}\|_{\mathcal{C}_{\bf X}^{\alpha}([b,c])}. \label{1.6}
\end{align}
\item For two ${\bf X}$-controlled rough paths ${\bf Y}=(Y,Y'),\,\widetilde{\bf Y}=(\widetilde{Y},\widetilde{Y}')\in \mathcal{C}_{{\bf X}}^{\alpha}([0,T],W)$, we have
\begin{align}\label{1.7}
\|(Y\widetilde{Y},(Y\widetilde{Y})')\|_{\mathcal{C}_{\bf X}^\alpha([0,T])}\lesssim (1+\|{\bf X}\|_{\alpha,[0,T]})\|{\bf Y}\|_{\mathcal{C}_{\bf X}^\alpha([0,T])}\|\widetilde{\bf Y}\|_{\mathcal{C}_{\bf X}^\alpha([0,T])},
\end{align}
\item Furthermore, for $G\in C_{b}^{2}(W,U)$, we get
\begin{align}\label{1.12}
\|(G(Y),G(Y)')\|_{\mathcal{C}_{\bf X}^{\alpha}([0,T])}\lesssim 1+\|{\bf Y}\|_{\mathcal{C}_{\bf X}^{\alpha}([0,T])}^{4}+\|{\bf X}\|_{\alpha,[0,T]}^{4}.
\end{align}
\end{enumerate}
\end{remark}

We next derive a deterministic bound for the derivative of the time-$t$ solution map, which will be used to establish the integrability of the linearized cocycle.

\begin{proposition} 
\label{prop:lin-deterministic-derivative-bound}
Assume that $F\in C_b^1(W,\mathcal L(U,W))$ and $F_{0}\in C_b^1(W,W)$. There exists an increasing polynomial-type function $Q_{\mathrm{lin}}$ such that for any $z_0\in W\times \mathcal L(V,W)$ and $\alpha\neq \frac{1}{2}$,
\begin{equation}
\label{eq:lin-deterministic-derivative-bound}
\sup_{0\le t\le T}
\log^+\|D_{z_{0}}\phi_{t}^\omega\|
\le
Q_{\mathrm{lin}}
\bigl(
\|z_{0}\|,\|\mathbf Z(\omega)\|_{\mathcal{C}_{\bf X(\omega)}^{\alpha}([0,T])},\|{\bf X(\omega)}\|_{\alpha,[0,T]}
\bigr),
\end{equation}
where $$\log^{+} x :=
\begin{cases}
	\log x, & x \ge 1, \\
	0, & 0 < x < 1.
\end{cases} $$   
\end{proposition}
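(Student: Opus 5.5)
The plan is to estimate the derivative through the variational equation of Proposition~\ref{thm:solution-Ck-regularity}. For a fixed direction $\xi$, the path $\xi_t:=D_{z_0}\phi_t^\omega[\xi]$ solves the \emph{linear} CRDE
\[
d\xi_t=DF(\phi_t^\omega(z_0))\xi_t\,d{\bf Z}_t(\omega)+DF_0(\phi_t^\omega(z_0))\xi_t\,dt,\qquad \xi_0=\xi .
\]
By Theorem~\ref{thm:solution-equivalence}, applied to the coupled system of base solution and variation, this is the same as a linear RDE driven by the canonical lift $\mathcal Z=(Z,\mathbb Z)$. Lemma~\ref{thm:lift-canonical-lift-rough-path} then gives
\[
\|\mathcal Z\|_{\alpha,[0,T]}\le M_{\mathrm{lift}}\bigl(\|{\bf X}\|_{\alpha,[0,T]},\|{\bf Z}\|_{\mathcal C^\alpha_{\bf X}([0,T])}\bigr).
\]
So it suffices to bound a linear RDE in terms of $\|\mathcal Z\|_\alpha$ and of the controlled norm of the coefficient path $A_t:=DF(\phi^\omega_t(z_0))$.

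\textbf{Step 1: control the coefficient path.} I would first bound the base solution ${\bf Y}=\phi^\omega(z_0)$ in $\mathcal C^\alpha_{\bf X}([0,h])$ on short intervals. The usual a priori estimate for RDEs with drift gives a bound on intervals of length $h$ with $h^{\alpha}\|\mathcal Z\|_\alpha\lesssim 1$; there the remainder and Gubinelli derivative are bounded by constants depending on $\|F\|_{C^2_b},\|F_0\|_{C^1_b}$. Through \eqref{1.12} this yields polynomial control of $(A,A')$ in $\|{\bf X}\|$ and $\|{\bf Z}\|$, and $\|z_0\|$ enters through $\|Y_0'\|$. Strictly, $A$ being controlled needs more regularity than the stated $C^1_b$, so I would implicitly use the standing $C^3_b$ assumption.

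\textbf{Step 2: short-interval linear estimate.} On an interval $[s,s+h]$, I would write $\xi$ as a fixed point of the linear map $\xi\mapsto \xi_s+\sint_s^{\cdot}A\xi\,d\mathcal Z+\sint_s^\cdot DF_0\,\xi\,dr$. Using the integral estimate of Lemma~\ref{thm:prelim-controlled-integral} together with \eqref{1.7}, the map has Lipschitz constant at most $1/2$ once
\[
h^{\alpha}\bigl(1+\|\mathcal Z\|_\alpha\bigr)\bigl(1+\|(A,A')\|\bigr)\le c .
\]
Hence $\sup_{[s,s+h]}\|\xi\|\le 2\|\xi_s\|$ with controlled norm $\lesssim\|\xi_s\|$. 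I expect this greedy choice of $h$, uniform in $s$, to be the main obstacle. The coefficient norm must not itself depend on $\xi$, and this is exactly why the linear structure matters.

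\textbf{Step 3: concatenation.} Covering $[0,T]$ by $N\approx T\,h^{-1}$ intervals gives
\[
\sup_{t\le T}\|D_{z_0}\phi_t^\omega\|\le 2^{N}.
\]
Taking $\log^+$ gives a bound $N\log 2\lesssim T\bigl[(1+\|\mathcal Z\|_\alpha)(1+\|(A,A')\|)\bigr]^{1/\alpha}$. Substituting $M_{\mathrm{lift}}$ and the Step~1 bounds makes this an increasing polynomial-type function $Q_{\mathrm{lin}}$ of $\|z_0\|$, $\|{\bf Z}\|$ and $\|{\bf X}\|$.

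\textbf{Role of $\alpha\neq\tfrac12$.} This restriction presumably enters only through the exponent $1/\alpha$ and the Hölder embedding constants used to compare the $\alpha$- and $2\alpha$-scales. If needed, I would work with a slightly smaller exponent $\beta<\alpha$ in the sewing step, where $\alpha<1/2$ provides room to do so.
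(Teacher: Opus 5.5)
Your outline is correct, but it takes a different route from the paper's. The paper never passes to the lift $\mathcal Z$. Instead it estimates $D_{z_0}\phi^\omega[\bar z]$ directly in $\mathcal C^\alpha_{\bf X}([s,t])$ via \eqref{1.7} and \eqref{1.12}. Its only small factor comes from the drift term, whose controlled norm is bounded by $|t-s|^{1-2\alpha}\|D_{z_0}\phi^\omega[\bar z]\|_{\mathcal C^\alpha_{\bf X}([s,t])}$. It then fixes a uniform step $\tau$ with $\tau^{1-2\alpha}MQ_2=1-\epsilon$, iterates, and glues the $\widetilde N\approx T(M_\epsilon Q_2)^{1/(1-2\alpha)}$ pieces with \eqref{1.6}. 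Finally it inserts the a priori bound $Q_3$ on $\|\phi^\omega(z_0)\|_{\mathcal C^\alpha_{\bf X}([0,T])}$ from \cite{LiGao2025}.

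The exponent $1-2\alpha$ is the only reason for the restriction $\alpha\neq\frac12$. Your guesses ($1/\alpha$, embedding constants, passing to $\beta<\alpha$) therefore miss it.

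Your mechanism gets the factor $h^\alpha$ from the rough integral instead. The difference $\eta$ of two candidates with the same $\xi_s,\xi'_s$ has $\eta_s=0$ and $\eta'_s=0$. The drift then contributes only $\|DF_0\|_\infty h^{1-2\alpha}\|\eta\|_\infty\le\|DF_0\|_\infty h^{1-\alpha}\|\eta\|_\alpha$. This gives three advantages:
\begin{enumerate}
\item it yields the same $\exp(\text{polynomial})$ bound;
\item it also covers $\alpha=\frac12$;
\item it gives a genuine small factor to the rough-integral contribution. In the paper's chain that contribution appears as $Q(\cdot)\|D_{z_0}\phi^\omega[\bar z]\|_{\mathcal C^\alpha_{\bf X}([s,t])}$ with no small factor, before being merged into the $|t-s|^{1-2\alpha}Q_2$ term.
\end{enumerate}

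The price is two justifications the paper's route does not need.
\begin{enumerate}
\item Theorem~\ref{thm:solution-equivalence} is stated for $C^2_b$ coefficients, whereas the coupled field $(y,\xi)\mapsto(F(y),DF(y)\xi)$ grows linearly in $\xi$. You must either rerun its purely local argument, or note directly that $\int A\xi\,d{\bf Z}=\int A\xi\,d\mathcal Z$ because the ${\bf X}$-derivative of $A\xi$ factors through $Z'$.
\item $M_{\mathrm{lift}}$ must be polynomial. The paper only calls it increasing, but polynomiality follows from Lemma~\ref{thm:prelim-controlled-integral} and \eqref{eq:lift-second-level-local-estimate}.
\end{enumerate}
Also, Step~1 should bound the $\mathcal Z$-controlled norm of $(A,A')$, since that is what your contraction uses. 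The conversion from $\mathcal C^\alpha_{\bf X}$ is harmless: the two remainders of $A$ differ by $D^2F(Y)F(Y)R^Z$.

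The paper's route, in turn, keeps every estimate in the original controlled data $({\bf X},{\bf Z})$.

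Your remark that $F\in C^1_b$ is insufficient, and that $C^3_b$ is what is really used, is correct. The paper's own proof applies \eqref{1.12} with $G=DF$.
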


\begin{proof}
By~\cite[Theorem~11.6]{FV10b} and our regularity assumption on $F_{0}$ and $F$, the time-$t$ solution map $\phi_{t}^\omega:W\times \mathcal L(V,W)\to W\times \mathcal L(V,W)$ of the drift-containing CRDE~\eqref{main-diff-CRDE} is differentiable with respect to the associated controlled initial conditions. 
Next we are going to prove estimate~\eqref{eq:lin-deterministic-derivative-bound}. The derivative satisfies the following equation:
\[dD_{z_{0}}\phi_{t}^{\omega}[\bar{z}]=D_{\phi_{t}^{\omega}(z_{0})}F(D_{z_0}\phi_{t}^{\omega}[\bar{z}])d{\bf Z}_{t}(\omega)+D_{\phi_{t}^{\omega}(z_{0})}
F_{0}(D_{z_{0}}\phi_{t}^{\omega}[\bar{z}])dt\]
with $D_{z_{0}}\phi_{0}^{\omega}[\bar{z}]=\bar{z}\in W\times \mathcal L(V,W)$. Moreover, 
\begin{align*}
 D_{z_0}\phi_t^\omega[\bar{z}]-D_{z_0}\phi_s^\omega[\bar{z}]
=\sint_{s}^{t}D_{\phi_{\tau}^{\omega}(z_0)}F(D_{z_0}\phi_\tau^\omega[\bar{z}])d{\bf Z}_{\tau}(\omega)+\sint_{s}^{t}D_{\phi_{\tau}^\omega(z_0)}F_{0}(D_{z_0}\phi_\tau^\omega[\bar{z}])d\tau
\end{align*}
By the definition of $F(Y)'$, 
\begin{align*}
&\ \|(D_{z_0}\phi^{\omega}(\bar{z}))'\|_{\mathcal{C}_{\bf X}^{\alpha}([s,t])}=\|D_{\phi_{t}^{\omega}(z_0)}F(D_{z_0}\phi_{t}^{\omega}[\bar{z}])\|_{\mathcal{C}_{\bf X}^{\alpha}([s,t])}\\
\lesssim &\ (1+\|{\bf X}\|_{\alpha,[s,t]})\|D_{\phi^{\omega}(z_{0})}F\|_{\mathcal{C}_{\bf X}^{\alpha}([s,t])}\|D_{z_0}\phi^{\omega}(\bar{z})\|_{\mathcal{C}_{\bf X}^{\alpha}([s,t])}\hspace{1cm}(\text{by~\eqref{1.7}})\\
\lesssim &\ (1+\|{\bf X}\|_{\alpha,[s,t]})(1+\|\phi^{\omega}(z_{0})\|_{\mathcal{C}_{\bf X}^{\alpha}([s,t])}^{4}+\|{\bf X}\|_{\alpha,[s,t]}^{4})\|D_{z_0}\phi^{\omega}(\bar{z})\|_{\mathcal{C}_{\bf X}^{\alpha}([s,t])}.\hspace{1cm}(\text{by~\eqref{1.12}})
\end{align*}
Since
\begin{align*}
&\ \|(\sint_{s}^{\cdot}Yd{\bf Z}_{\tau}, YZ')\|_{\mathcal{C}_{\bf X}^{\alpha}([s,t])}\\
=&\ \max\Big\{\|\sint_{s}^{\cdot}Yd{\bf Z}\|_{\infty},\|YZ'\|_{\infty},\|YZ'\|_{\alpha,[s,t]},\|\sint_{s}^{\cdot}YdZ-Y_{u}Z'_{u}X_{u,v}\|_{2\alpha,[s,t]}\Big\}\\
\lesssim &\ C_{\alpha,T}(|Y_{0}'|+\|{\bf Y}\|_{\mathcal{C}_{\bf X}^{\alpha}([s,t])})(|Z_{0}'|+\|{\bf Z}\|_{\mathcal{C}_{\bf X}^{\alpha}([s,t])})(1+\|{\bf X}\|_{\alpha,[s,t]}+\|{\bf X}\|_{\alpha,[s,t]}^{2})+\|{\bf Y}\|_{\mathcal{C}_{\bf X}^{\alpha}([s,t])}^{2}\hspace{1cm}(\text{by }\eqref{eq:prelim-Y-holder-bound})\\
\lesssim &\ (|Y_{0}'|+\|{\bf Y}\|_{\mathcal{C}_{\bf X}^{\alpha}([s,t])})(|Z_{0}'|+\|{\bf Z}\|_{\mathcal{C}_{\bf X}^{\alpha}([s,t])})(1+\|{\bf X}\|_{\alpha})+\|{\bf Y}\|^{2}_{\mathcal{C}_{\bf X}^{\alpha}([s,t])}+\|{\bf Y}\|_{\mathcal{C}_{\bf X}^{\alpha}([s,t])}\|{\bf Z}\|_{\mathcal{C}_{\bf X}^{\alpha}([s,t])}\|{\bf X}\|_{\alpha,[s,t]}^{2},
\end{align*}
we have
\begin{align*}
&\ \| \sint_{s}^{\cdot}D_{\phi_{\tau}^{\omega}(z_{0})}F(D_{z_{0}}\phi_{\tau}^{\omega}(\bar{z}))d{\bf Z}_{\tau}(\omega)\|_{\mathcal{C}_{\bf X}^{\alpha}([s,t])}\nonumber\\
=&\ \|\big(\sint_{s}^{\cdot}D_{\phi_{\tau}^{\omega}(z_{0})}F(D_{z_{0}}\phi_{\tau}^{\omega}(\bar{z}))d{\bf Z}_{\tau}(\omega),D_{\phi_{\tau}^{\omega}(z_{0})}
F(D_{z_0}\phi_{t}^{\omega}(\bar{z}))Z'\big)\|_{\mathcal{C}_{\bf X}^{\alpha}([s,t])}\nonumber\\
\lesssim &\ (1+\|{\bf X}\|_{\alpha}+\|{\bf X}\|_{\alpha}^{2})\|D_{\phi_{\tau}^{\omega}(z_{0})}F(D_{z_0}\phi^{\omega}(\bar{z}))\|_{\mathcal{C}_{\bf X}^{\alpha}([s,t])}\|{\bf Z}\|_{\mathcal{C}_{\bf X}^{\alpha}([s,t])}+\|D_{\phi_{\tau}^{\omega}(z_0)}F(D_{z_0}\phi^{\omega}(\bar{z}))\|_{\mathcal{C}_{\bf X}^{\alpha}([s,t])}^{2}\nonumber\\
\lesssim &\ (1+\|{\bf X}\|_{\alpha}+\|{\bf X}\|_{\alpha}^{2})(\|D_{z_{0}}\phi^{\omega}(\bar{z})\|+\|D_{z_0}\phi^{\omega}(\bar{z})\|^{2})\|{\bf Z}\|_{\mathcal{C}_{\bf X}^{\alpha}([s,t])}+Q(\|{\bf X}\|_{\alpha},\|\phi^{\omega}(z_{0})\|_{\mathcal{C}_{\bf X}^{\alpha}([s,t])}, \|{\bf Z}\|_{\mathcal{C}_{\bf X}^{\alpha}([s,t])})\nonumber\\
&\ \times \|D_{z_{0}}\phi^{\omega}(\bar{z})\|_{\mathcal{C}_{\bf X}^{\alpha}([s,t])}.\hspace{1cm}(\text{by $F$ being }C_{b}^{1})
\end{align*}
For the drift term, 
\begin{align*}
&\ \|(\sint_{s}^{\cdot}D_{\phi^{\omega}_{\tau}(z_{0})}F_{0}(D_{z_{0}}\phi^{\omega}_{\tau}(\bar{z}))d\tau,0)\|_{\mathcal{C}_{\bf X}^{\alpha}([s,t])}\nonumber\\
=&\ \max\Big\{\|\sint_{s}^{\cdot}D_{\phi^{\omega}_{\tau}(z_0)}F_{0}(D_{z_{0}}\phi^{\omega}_{\tau}(\bar{z}))d\tau\|_{\infty},\|\sint_{s}^{\cdot}D_{\phi^{\omega}_{\tau}
(z_{0})}F_{0}(D_{z_{0}}\phi^{\omega}_{\tau}(\bar{z}))d\tau\|_{2\alpha,[s,t]}\Big\}\nonumber\\
\lesssim &\ \sint_{s}^{t}\|D_{\phi^{\omega}_{\tau}(z_{0})}F_{0}(D_{z_{0}}\phi^{\omega}_{\tau}(\bar{z}))\|d\tau \cdot (t-s)^{-2\alpha}\nonumber\\
\le &\ \sup_{\tau\in [s,t]}\|D_{\phi^{\omega}_{\tau}(z_{0})}F_{0}\|\cdot \|D_{z_{0}}\phi^{\omega}_{\tau}(\bar{z})\|\cdot |t-s|^{1-2\alpha}\nonumber\\
\lesssim &\ |t-s|^{1-2\alpha}\|D_{z_{0}}\phi^{\omega}_{\cdot}(\bar{z})\|_{\mathcal{C}_{\bf X}^{\alpha}([s,t])},
\end{align*}
where there exist polynomials $Q_1,Q_2$ such that
\begin{align*}
&\ \|D_{z_{0}}\phi^{\omega}_{\cdot}(\bar{z})\|_{\mathcal{C}_{\bf X}^{\alpha}([s,t])}=\|\bar{z}+\sint_{0}^{\cdot}DFd{\bf Z}_{\tau}+\sint_{0}^{\cdot}DF_{0}d\tau\|_{\mathcal{C}_{\bf X}^{\alpha}([s,t])}\\
\le &\ \|\sint_{s}^{\cdot}D_{\phi^{\omega}_{\tau}(z_{0})}F(D_{z_{0}}\phi^{\omega}_{\tau}(\bar{z}))d{\bf Z}_{\tau}\|_{\mathcal{C}_{\bf X}^{\alpha}([s,t])}
+\|\sint_{s}^{\cdot}D_{\phi^{\omega}_{\tau}(z_{0})}F_{0}(D_{z_{0}}\phi^{\omega}_{\tau}(\bar{z}))d\tau\|_{\mathcal{C}^{\alpha}_{\bf X}([s,t])} +\|D_{z_{0}}\phi^{\omega}_{0}(\bar{z})\|_{\mathcal{C}^{\alpha}_{\bf X}([s,t])}\\
\lesssim &\ (1+\|{\bf X}\|_{\alpha}+\|{\bf X}\|_{\alpha}^{2})(\|D_{z_{0}}\phi^{\omega}(\bar{z})\|+\|D_{z_{0}}\phi^{\omega}(\bar{z})\|^{2})\|{\bf Z}\|_{\mathcal{C}_{\bf X}^{\alpha}([s,t])}+Q(\|{\bf X}\|_{\alpha},\|\phi^{\omega}(z_{0})\|_{\mathcal{C}_{\bf X}^{\alpha}([s,t])},\|{\bf Z}\|_{\mathcal{C}_{\bf X}^{\alpha}([s,t])})\\
&\ \times\|D_{z_{0}}\phi^{\omega}(\bar{z})\|_{\mathcal{C}_{\bf X}^{\alpha}([s,t])}+|t-s|^{1-2\alpha}Q_{1}\|D_{z_{0}}\phi^{\omega}(\bar{z})\|_{\mathcal{C}_{\bf X}^{\alpha}([s,t])}\\
\le &\ M\big[ (1+\|{\bf X}\|_{\alpha,[s,t]}^{2})(\|D_{z_{0}}\phi^{\omega}_{s}(\bar{z})\|+\|D_{z_{0}}\phi^{\omega}_{s}(\bar{z})\|^{2})\|{\bf Z}\|_{\mathcal{C}_{\bf X}^{\alpha}([s,t])}\\
&\ +|t-s|^{1-2\alpha}(Q_{2}(\|{\bf X}\|_{\alpha,[s,t]},\|\phi^{\omega}(z_{0})\|_{\mathcal{C}_{\bf X}^{\alpha}([s,t])},\|{\bf Z}\|_{\mathcal{C}_{\bf X}^{\alpha}([s,t])}))\|D_{z_{0}}\phi^{\omega}(\bar{z})\|_{\mathcal{C}_{\bf X}^{\alpha}([s,t])} \big].
\end{align*}
We choose a finite sequence $(\tau_{n})_{0\leq n\leq \widetilde{N}(\epsilon,{\bf X},{\bf Z},z_{0})}$ in $[0,T]$ such that 
$$\tau_{0}=0,\qquad \tau_{\widetilde{N}(\epsilon,{\bf X},{\bf Z},z_{0})}=T,\qquad \forall 0<\epsilon<1.$$
Assume that $$\tau_{n+1}-\tau_{n}=\tau,\qquad \forall 0\leq n\leq \widetilde{N}(\epsilon,{\bf X},{\bf Z},z_{0})-1,$$ and there exists $\epsilon\in (0,1)$ 
such that
\[\tau^{1-2\alpha}M\big[Q_{2}(\|{\bf X}\|_{\alpha,[0,T]},\|\phi^{\omega}(z_{0})\|_{\mathcal{C}_{\bf X}^{\alpha}([0,T])},\|{\bf Z}\|_{\mathcal{C}_{\bf X}^{\alpha}([0,T])})\big]=1-\epsilon.\]
Set  
$
M_\epsilon:=\frac{M}{\epsilon}.
$
Then, for every 
\[
0\le n\le \widetilde N(\epsilon,{\bf X},{\bf Z},z_0)-1,
\]
we have
\[\|D_{z_{0}}\phi^{\omega}(\bar{z})\|_{\mathcal C_{\bf X}^\alpha ([\tau_n,\tau_{n+1}])}\leq M_{\epsilon}(1+\|{\bf X}\|^{2}_{\alpha,[0,T]})\|{\bf Z}\|_{\mathcal{C}_{\bf X}^{\alpha}([0,T])}\|D_{z_{0}}\phi^{\omega}_{\tau_{n}}(\bar{z})\|.\]
Notice that
\begin{align*}
\widetilde{N}(\epsilon,{\bf X},{\bf Z},z_{0})=&\ \frac{T}{\tau}+1\\
=&\ T(M_{\epsilon}(Q_{2}(\|{\bf X}\|_{\alpha,[0,T]},\|\phi^{\omega}(z_{0})\|_{\mathcal{C}_{\bf X}^{\alpha}([0,T])},\|{\bf Z}\|_{\mathcal{C}_{\bf X}^{\alpha}([0,T])})))^{\frac{1}{1-2\alpha}}+1.
\end{align*} 
Then by~\cite{LiGao2025}, there exists a polynomial $Q_3$ such that
\[\|\phi^{\omega}(z_{0})\|_{\mathcal{C}_{\bf X}^{\alpha}([0,T])}\le Q_{3}(\|z_{0}\|,\|{\bf X}\|_{\alpha,[0,T]},\|{\bf Z}\|_{\mathcal{C}_{\bf X}^{\alpha}([0,T])}),\]
whence
\[\sup_{0\le n\le \widetilde{N}(\epsilon,{\bf X},{\bf Z},z_{0})-1}\|D_{z_{0}}\phi^{\omega}(\bar{z})\|_{\mathcal{C}_{\bf X}^{\alpha}([\tau_{n},\tau_{n+1}])}\le \|\bar{z}\| (M_{\epsilon}(1+\|{\bf X}\|_{\alpha,[0,T]}^{2})\|{\bf Z}\|_{\mathcal{C}_{\bf X}^{\alpha}([0,T])})^{\widetilde{N}(\epsilon,{\bf X},{\bf Z},z_{0})}.\]
From~\eqref{1.6}, for any $0\le n\le \widetilde{N}(\epsilon,{\bf X},{\bf Z},z_{0})-1$, we have
\[\|D_{z_{0}}\phi^{\omega}(\bar{z})\|_{\mathcal{C}_{\bf X}^{\alpha}([0,T])}\le (\widetilde{N}(\epsilon,{\bf X},{\bf Z},z_{0})+1)(\|{\bf X}\|_{\alpha,[0,T]}+1)(M_{\epsilon}(1+\|{\bf X}\|_{\alpha,[0,T]}^{2})\|{\bf Z}\|_{\mathcal{C}_{\bf X}^{\alpha}([0,T])})^{\widetilde{N}(\epsilon,{\bf X},{\bf Z},z_{0})}\|\bar{z}\|.\]
Thus, for any $\bar{z},z_{0}\in W\times \mathcal L(V,W)$ and $\alpha\neq \frac{1}{2}$, we get
\[\|D_{z_{0}}\phi^{\omega}(\bar{z})\|_{\mathcal{C}_{\bf X}^{\alpha}([0,T])}\leq \|\bar{z}\| \exp(Q_{3}(\|z_{0}\|,\|{\bf X}\|_{\alpha,[0,T]},\|{\bf Z}\|_{\mathcal{C}_{\bf X}^{\alpha}([0,T])})).\]
That is, there exists an increasing polynomial-type function $Q_{\mathrm{lin}}$ such that
\[\sup_{0\le t\le T}
\log^+\|D_{z_{0}}\phi_{t}^\omega\|\le \log^+\|D_{z_{0}}\phi^{\omega}_t\|_{\mathcal{C}_{\bf X}^{\alpha}([0,T])}\le Q_{\mathrm{lin}}
\bigl(
\|z_{0}\|,\|\mathbf Z(\omega)\|_{\mathcal{C}_{\bf X}^{\alpha}([0,T])},\|{\bf X(\omega)}\|_{\alpha,[0,T]}
\bigr).\]
This completes the proof.
\end{proof}

The preceding derivative estimate also provides quantitative control of the dependence of the solution on its associated initial condition.

\begin{corollary}
With the setting in Proposition~\ref{prop:lin-deterministic-derivative-bound}, there exists a polynomial $\widetilde{Q}_{\mathrm{lin}}$ such that for any $y,z\in W\times \mathcal L(V,W)$,
\begin{align}\label{phi-y-z-contin}
\sup_{0\le t\le T}\|\phi_t^\omega(y)-\phi_t^\omega(z)\|\le \|y-z\|\exp(\widetilde{Q}_{\mathrm{lin}}(\|y\|,\|z\|,\|\mathbf Z(\omega)\|_{\mathcal{C}_{\bf X}^{\alpha}([0,T])},\|{\bf X(\omega)}\|_{\alpha,[0,T]})).
\end{align}
\end{corollary}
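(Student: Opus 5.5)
The plan is to derive \eqref{phi-y-z-contin} from Proposition~\ref{prop:lin-deterministic-derivative-bound} by the mean value theorem along the segment joining $z$ to $y$. For $\lambda\in[0,1]$ set $z_\lambda:=z+\lambda(y-z)\in W\times\mathcal L(V,W)$. By Proposition~\ref{thm:solution-Ck-regularity} (or the differentiability recalled at the start of the proof of Proposition~\ref{prop:lin-deterministic-derivative-bound}), each $\phi_t^\omega$ is $C^1$. Hence, for each fixed $t\in[0,T]$,
\[
\phi_t^\omega(y)-\phi_t^\omega(z)=\int_0^1 D_{z_\lambda}\phi_t^\omega[y-z]\,d\lambda,
\]
so that
\[
\|\phi_t^\omega(y)-\phi_t^\omega(z)\|\le \|y-z\|\sup_{\lambda\in[0,1]}\|D_{z_\lambda}\phi_t^\omega\|.
\]

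Next I will bound the operator norm uniformly in $\lambda$ and $t$. Since $\|D\|\le \max\{1,\|D\|\}=\exp(\log^+\|D\|)$, Proposition~\ref{prop:lin-deterministic-derivative-bound} gives
\[
\sup_{0\le t\le T}\|D_{z_\lambda}\phi_t^\omega\|\le \exp\Bigl(Q_{\mathrm{lin}}\bigl(\|z_\lambda\|,\|\mathbf Z(\omega)\|_{\mathcal C^\alpha_{\bf X(\omega)}([0,T])},\|\mathbf X(\omega)\|_{\alpha,[0,T]}\bigr)\Bigr).
\]
Because $\|z_\lambda\|\le (1-\lambda)\|z\|+\lambda\|y\|\le \|y\|+\|z\|$ and $Q_{\mathrm{lin}}$ is increasing in each argument, the supremum over $\lambda$ is dominated by the same expression evaluated at $\|y\|+\|z\|$. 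Defining
\[
\widetilde Q_{\mathrm{lin}}(a,b,c,d):=Q_{\mathrm{lin}}(a+b,c,d)
\]
gives a polynomial-type function (a polynomial if $Q_{\mathrm{lin}}$ is one, or else dominated by one). Taking the supremum over $t\in[0,T]$, which commutes with the $t$-uniform bound, then yields \eqref{phi-y-z-contin}.

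The only delicate point is whether Proposition~\ref{prop:lin-deterministic-derivative-bound} truly controls the operator norm $\|D_{z_0}\phi_t^\omega\|$ uniformly in $t$ with a bound depending on $z_0$ only through $\|z_0\|$ monotonically. That proposition states exactly this, including the $\sup_t$ inside $\log^+$, so no further work is needed beyond checking monotonicity. A second minor point is that the integral form of the mean value theorem requires continuity of $\lambda\mapsto D_{z_\lambda}\phi_t^\omega$, which the $C^1$ regularity provides. The standing restriction $\alpha\neq\frac12$ is inherited unchanged.
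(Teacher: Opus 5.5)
Your proposal is correct and follows essentially the same route as the paper: the integral mean value formula $\phi_t^\omega(y)-\phi_t^\omega(z)=\int_0^1 D_{z+\theta(y-z)}\phi_t^\omega(y-z)\,d\theta$ combined with the derivative bound of Proposition~\ref{prop:lin-deterministic-derivative-bound}. You also make explicit two details the paper leaves implicit, namely $\|D\|\le\exp(\log^+\|D\|)$ and the monotonicity step $\|z_\lambda\|\le\|y\|+\|z\|$ that yields $\widetilde Q_{\mathrm{lin}}(a,b,c,d)=Q_{\mathrm{lin}}(a+b,c,d)$.
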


\begin{proof}
It follows from Proposition~\ref{prop:lin-deterministic-derivative-bound} and
\[\phi_t^\omega(y)-\phi_t^\omega(z)=\sint_0^1D_{z+\theta(y-z)}\phi_t^\omega(y-z)d\theta.\qedhere\]
\end{proof}

For any fixed $z_0\in W\times \mathcal L(V,W)$, by the semiflow property, the maps 
$$\psi_t^{\omega}=D_{z_{0}}\phi_{t}^\omega: W\times \mathcal L(V,W)\to W\times \mathcal L(V,W),\qquad \forall t\in\mathbb{R}_{\geq 0},\,\omega\in \Omega$$ 
are invertible. We next estimate their inverses.

\begin{proposition}\label{prop:lin-deterministic-inverse-bound}
Under the assumptions of Proposition~\ref{prop:lin-deterministic-derivative-bound}, 
there exists an increasing polynomial type function $Q_{\mathrm{inv}}$ such that
\[\sup_{0\le t\le t_0}
\log^+\|(\psi_t^\omega)^{-1}\|
\le
Q_{\mathrm{inv}}
\bigl(\|z_0\|,
\|\mathbf Z(\omega)\|_{\alpha,[0,t_0]}
\bigr).\]
\end{proposition}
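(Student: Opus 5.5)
The approach is to realize the inverse $(\psi_t^\omega)^{-1}$ as the solution of a linear rough equation of the same type as the variational equation~\eqref{eq:solution-variational-equation-general}. We then run the same block-iteration argument as in the proof of Proposition~\ref{prop:lin-deterministic-derivative-bound}. Fix $z_0$ and write $y_\tau:=\phi^\omega_\tau(z_0)$. By Proposition~\ref{thm:solution-Ck-regularity}, $J_t:=\psi^\omega_t=D_{z_0}\phi^\omega_t$ solves the matrix-valued linear equation
\[
dJ_t=DF(y_t)J_t\,d\mathbf Z_t(\omega)+DF_0(y_t)J_t\,dt,\qquad J_0=\mathrm{Id}.
\]
The candidate for the inverse, $K_t:=J_t^{-1}$, should solve the adjoint equation
\[
dK_t=-K_tDF(y_t)\,d\mathbf Z_t(\omega)+K_t\big(\text{It\^o--Stratonovich-type correction}\big)\,dt\ \text{-type terms}-K_tDF_0(y_t)\,dt .
\]
To avoid computing correction terms directly with the controlled integral, we pass through Theorem~\ref{thm:solution-equivalence}. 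In the lifted formulation driven by the canonical lift $\mathcal Z$ (Lemma~\ref{thm:lift-canonical-lift-rough-path}), which is geometric-like enough for the product rule, the classical computation gives
\[
dK_t=-K_tDF(y_t)\,d\mathcal Z_t-K_tDF_0(y_t)\,dt .
\]
One checks via the product rule for controlled paths (estimate~\eqref{1.7} and the local expansion~\eqref{eq:solution-lifted-local-expansion}) that $d(K_tJ_t)=0$, hence $K_tJ_t=\mathrm{Id}$. Alternatively, we use the representation $(\psi_t^\omega)^{-1}=D_{\phi_t^\omega(Y_\omega)}\phi^\omega_{-t}$ stated before the proposition, i.e.\ the derivative of the backward flow. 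This is again a variational equation driven by the time-reversed driver on $[0,t]$, whose controlled norms coincide with the forward ones on $[0,t_0]$.

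Next, we estimate $K$ exactly as $J$ was estimated. The equation for $K$ is linear, with coefficients $DF(y)$ and $DF_0(y)$. Their controlled norms on $[s,t]$ are bounded by~\eqref{1.12} in terms of $\|\phi^\omega(z_0)\|_{\mathcal C^\alpha_{\bf X}}$, $\|\mathbf X\|_\alpha$, and $\|\mathbf Z\|_{\mathcal C^\alpha_{\bf X}}$. Repeating the local estimate of Proposition~\ref{prop:lin-deterministic-derivative-bound}, we obtain
\[
\|K\|_{\mathcal C^\alpha_{\bf X}([s,t])}\le M\big[(1+\|\mathbf X\|^2)\|\mathbf Z\|\,\|K_s\|+|t-s|^{1-2\alpha}Q_2\,\|K\|_{\mathcal C^\alpha_{\bf X}([s,t])}\big].
\]
We then choose a uniform mesh $\tau$ with $\tau^{1-2\alpha}MQ_2=1-\epsilon$, absorb the last term, and iterate over the $\widetilde N\approx t_0 (M_\epsilon Q_2)^{1/(1-2\alpha)}+1$ blocks, gluing with~\eqref{1.6}. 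The bound for $\|\phi^\omega(z_0)\|_{\mathcal C^\alpha}$ by $Q_3(\|z_0\|,\dots)$ from~\cite{LiGao2025} then gives
\[
\sup_{t\le t_0}\|K_t\|\le\exp\big(Q(\|z_0\|,\|\mathbf X\|_{\alpha,[0,t_0]},\|\mathbf Z\|_{\mathcal C^\alpha_{\bf X}([0,t_0])})\big).
\]
Taking $\log^+$ yields $Q_{\mathrm{inv}}$. Here we read $\|\mathbf Z(\omega)\|_{\alpha,[0,t_0]}$ in the statement as the controlled norm, with the dependence on $\|\mathbf X\|$ absorbed or tacitly included as in Proposition~\ref{prop:lin-deterministic-derivative-bound}.

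The main obstacle is the first step: rigorously identifying $J_t^{-1}$ with the solution of a well-posed rough linear equation. With a controlled driver the product rule produces the bracket term $(Z'\otimes Z')\mathbb X$ versus $\mathbb Z$. I would handle this entirely in the lifted RDE picture, where $\mathcal Z$ is an honest rough path and inverse-flow results from~\cite{FH20,FV10b} apply. I would then transfer the estimate back, using $\|\mathcal Z\|_\alpha\le M_{\mathrm{lift}}(\|\mathbf X\|,\|\mathbf Z\|)$ to express everything in the controlled data. If $\mathcal Z$ is not geometric, the inverse equation acquires a bounded drift correction $K\,D^2F$-type $\cdot[\mathbb Z-\mathbb Z^{\mathrm{sym}}]$. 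It is linear in $K$ with polynomially bounded coefficients, so it only enlarges $Q_2$ and does not change the structure of the argument.
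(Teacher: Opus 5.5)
Your proposal is sound, and your ``alternatively'' route is essentially the paper's proof. The paper fixes $t$ and sets $\widetilde\psi^u:=\psi_{t-u}(\psi_t)^{-1}$, which is the derivative of the backward flow. Using the time-reversed pair $(\widetilde{\mathbf X},\widetilde{\mathbf Z})$, whose controlled norms equal the forward ones, it shows that $\widetilde\psi$ solves a linear equation of the same forward type with $\widetilde\psi^0=\mathrm{Id}$ and $\widetilde\psi^t=(\psi_t)^{-1}$. It then simply invokes the a priori bound of Proposition~\ref{prop:lin-deterministic-derivative-bound}; your block-iteration step is the same.

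Your primary route, the adjoint equation for $K=J^{-1}$ with $d(KJ)=0$, is genuinely different. It gives $J_t^{-1}$ for all $t\le t_0$ from one forward equation and needs no reversal of the controlled pair. However, it rests on a product rule that is exact only when $\mathcal Z$ is weakly geometric. This holds, for example, when $\mathbf X$ is weakly geometric, since $[\mathcal Z]_{s,t}=(Z'_s\otimes Z'_s)[\mathbf X]_{s,t}+O(|t-s|^{3\alpha})$, but the paper does not assume it. The reversal route needs no geometricity, because for any rough path the inverse flow is exactly the flow driven by the reversed rough path.

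\textbf{Non-geometric correction.} In the non-geometric case the adjoint equation is $dK=-K\,DF(y)\,d\mathcal Z+K\,(DF(y)\cdot DF(y))\,d[\mathcal Z]-K\,DF_0(y)\,dt$, with $[\mathcal Z]_{s,t}=Z_{s,t}\otimes Z_{s,t}-2\,\mathrm{Sym}\,\mathbb Z_{s,t}$. Your description of this term is off in three ways:
\begin{enumerate}
\item The coefficient is a product of first derivatives, forced by $K'J'=-K\,DF\,DF\,J$, not a $D^2F$ term.
\item It involves the symmetric part of $\mathbb Z$, not $\mathbb Z-\mathbb Z^{\mathrm{sym}}$.
\item It is a Young integral against a $2\alpha$-H\"older path, not a $dt$-drift, so it does not fit the $|t-s|^{1-2\alpha}$ drift slot.
\end{enumerate}
Your conclusion still survives. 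We have $\|[\mathcal Z]\|_{2\alpha}\lesssim\|\mathcal Z\|_\alpha^2$, and the Young estimate splits into a multiple of $\|K_s\|$ plus a term with small factor $|t-s|^{\alpha}$, which can be absorbed in the block iteration.

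\textbf{Setting up the reversal.} If you take the reversal route instead, do not copy the paper's Step~1 literally. The reversed second level must be the group inverse, $\widetilde{\mathbb X}_{s,t}=-\mathbb X_{t_0-t,t_0-s}+X_{t_0-t,t_0-s}\otimes X_{t_0-t,t_0-s}$; the paper's $-\mathbb X_{t_0-t,t_0-s}$ alone violates Chen's relation. The reversed Gubinelli derivatives are $+Z'_{t_0-\cdot}$ and $+Y'_{t_0-\cdot}$; the paper has the wrong sign for $Y$.

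\textbf{Norm dependence.} You read $\|\mathbf Z(\omega)\|_{\alpha,[0,t_0]}$ as tacitly including $\|\mathbf X\|$, or as $\|\mathcal Z\|_\alpha$. That is what either argument actually delivers; the paper's Step~3 drops this dependence without comment.
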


\begin{proof}
Fix $t_0 \in (0, T]$ and omit the sample parameter $\omega$ for brevity. Recall that for $t \in [0, t_0]$, 
the linearized flow $\psi_t := D_{z_0}\phi_t \in \mathcal L(W\times \mathcal L(V,W), W\times \mathcal L(V,W))$ satisfies 
\begin{equation}\label{eq:forward_psi}
\mathrm{d}\psi_t = DF(\phi_t(z_0))\psi_t \,\mathrm{d}\mathbf{Z}_t + DF_0(\phi_t(z_0))\psi_t \,\mathrm{d}t, \quad \psi_0 = \mathrm{Id}_{W\times \mathcal L(V,W)}.
\end{equation}
To establish the deterministic bound for $(\psi_t)^{-1}$ for all $t \in [0, t_0]$, we employ a time-reversal argument on $[0, t_0]$ in three steps.

\medskip
\noindent\textbf{Step 1.} 
Define the time-reversed rough path $\widetilde{\mathbf{X}} = (\widetilde{X}, \widetilde{\mathbb{X}})$ on $[0, t_0]$ by
\[
\widetilde{X}_{s,t} := X_{t_0-t} - X_{t_0-s} = - X_{t_0-t, t_0-s}, \qquad \widetilde{\mathbb{X}}_{s,t} := -\mathbb{X}_{t_0-t, t_0-s}, \qquad \forall (s,t)\in\dt{0,t_0}.
\]
Next, define the time-reversed driver $\widetilde{\mathbf{Z}} = (\widetilde{Z}, \widetilde{Z}')$ by
\[
\widetilde{Z}_t := Z_{t_0-t}, \qquad \widetilde{Z}'_t := Z'_{t_0-t}, \qquad \forall t \in [0, t_0].
\]
For any $0 \le s \le t \le t_0$, expanding the difference yields
\[
 \widetilde{Z}_{s,t} = Z_{t_0-t} - Z_{t_0-s} = -Z_{t_0-t, t_0-s} = -Z'_{t_0-t} X_{t_0-t, t_0-s} - R^Z_{t_0-t, t_0-s} = \widetilde{Z}'_t \widetilde{X}_{s,t} + R^{\widetilde{Z}}_{s,t},
\]
where $R^{\widetilde{Z}}_{s,t} := -R^Z_{t_0-t, t_0-s}$. Since $$\|R^{\widetilde{Z}}\|_{2\alpha,[0,t_0]} = \|R^Z\|_{2\alpha,[0,t_0]} < \infty,$$ we have $\widetilde{\mathbf{Z}}=(Z_{t_0-\cdot},Z'_{t_0-\cdot}) \in \mathcal{C}_{\widetilde{\mathbf{X}}}^\alpha([0, t_0],U)$ with Gubinelli derivative $\widetilde{Z}'$, and its controlled path norm satisfies
\[
\|\widetilde{\mathbf{Z}}\|_{\mathcal{C}_{\widetilde{\mathbf{X}}}^\alpha([0, t_0])} = \|\mathbf{Z}\|_{\mathcal{C}_{\mathbf{X}}^\alpha([0, t_0])}.
\]
For any controlled rough path $(Y, Y') \in \mathcal{C}_{\mathbf{X}}^\alpha([0, t_0])$, set 
$$(\widetilde{Y}_t, \widetilde{Y}'_t) := (Y_{t_0-t}, -Y'_{t_0-t}) \in \mathcal{C}_{\widetilde{\mathbf{X}}}^\alpha([0, t_0],W).$$ 
Then 
\begin{equation}\label{eq:time_reversal_integral}
\sint_a^b F(\widetilde{Y}_\tau) \,\mathrm{d}\widetilde{\mathbf{Z}}_\tau = -\sint_{t_0-b}^{t_0-a} F(Y_\tau) \,\mathrm{d}\mathbf{Z}_\tau, \qquad \forall 0 \le a \le b \le t_0.
\end{equation}

\medskip
\noindent\textbf{Step 2.} 
Fix any $t \in (0, t_0]$. Consider 
$$\widetilde{\psi}:[0, t]\to \mathcal L(W\times \mathcal L(V,W), W\times \mathcal L(V,W)),\qquad u \mapsto \widetilde{\psi}^u := \psi_{t-u} (\psi_t)^{-1}.$$
At the endpoints, we have \[\widetilde{\psi}^0 = \psi_t (\psi_t)^{-1} = \mathrm{Id}_{W\times \mathcal L(V,W)},\qquad \widetilde{\psi}^t = \psi_0 (\psi_t)^{-1} = (\psi_t)^{-1}.\]
For any $0 \le s < u \le t$, applying~\eqref{eq:forward_psi} on the interval $[t-u, t-s]$ gives
\[
\psi_{t-s} - \psi_{t-u} = \sint_{t-u}^{t-s} DF(\phi_\tau(z_0)) \psi_\tau \,\mathrm{d}\mathbf{Z}_\tau + \sint_{t-u}^{t-s} DF_0(\phi_\tau(z_0)) \psi_\tau \,\mathrm{d}\tau.
\]
Right-multiplying by $(\psi_t)^{-1}$ and using the time-reversal integral identity \eqref{eq:time_reversal_integral}, we obtain
\[
\widetilde{\psi}^s - \widetilde{\psi}^u = -\sint_{s}^{u} DF(\phi_{t-r}(z_0)) \widetilde{\psi}^r \,\mathrm{d}\widetilde{\mathbf{Z}}_r -\sint_{s}^{u}DF_{0}(\phi_{t-r}(z_{0}))\widetilde{\psi}^{r}dr.
\]
Rearranging terms demonstrates that $u \mapsto \widetilde{\psi}^u$ satisfies the following linear rough differential equation driven by $\widetilde{\mathbf{Z}}$:
\begin{equation}\label{eq:backward_crde}
\mathrm{d}\widetilde{\psi}^u = -DF(\phi_{t-u}(z_0)) \widetilde{\psi}^u \,\mathrm{d}\widetilde{\mathbf{Z}}_u - DF_0(\phi_{t-u}(z_0)) \widetilde{\psi}^u \,\mathrm{d}u, \qquad \widetilde{\psi}^0 = \mathrm{Id}_{W\times \mathcal L(V,W)}.
\end{equation}

\medskip
\noindent\textbf{Step 3.} 
Since \eqref{eq:backward_crde} is a linear rough differential equation driven by a controlled rough path with initial condition $\widetilde{\psi}^0 = \mathrm{Id}_{W\times \mathcal L(V,W)}$, we can apply the deterministic a priori controlled rough path estimate in Proposition~\ref{prop:lin-deterministic-derivative-bound} to \eqref{eq:backward_crde}. Specifically, the bound for $\widetilde{\psi}^u$ depends only on the regularity bounds of the vector fields $F, F_0$, the solution norm $\|\phi_{\cdot}(z_0)\|$, the driver norm $\|\widetilde{\mathbf{Z}}\|_{\mathcal{C}_{\widetilde{\mathbf{X}}}^\alpha([0, t_0])}$, and the initial condition $\|z_0\|$.

Consequently, there exists an increasing polynomial-type function $Q_{\text{inv}}$ such that for all $u \in [0, t]$,
\[
\|\widetilde{\psi}^u\| \le \exp\left( Q_{\text{inv}}\left( \|z_0\|, \|\mathbf{Z}\|_{\mathcal{C}_{\mathbf{X}}^\alpha([0, t_0])} \right) \right).
\]
Evaluating this estimate at $u = t$, where $\widetilde{\psi}^t = (\psi_t)^{-1} = (D_{z_0}\phi_t)^{-1}$, yields
\[
\left\| (\psi_t)^{-1} \right\| \le \exp\left( Q_{\text{inv}}\left( \|z_0\|, \|\mathbf{Z}\|_{\mathcal{C}_{\mathbf{X}}^\alpha([0, t_0])} \right) \right).
\]
Taking the supremum over $t \in [0, t_0]$ and using the definition of $\log^+(\cdot)$,
\[
\sup_{0 \le t \le t_0} \log^+ \left\| (\psi_t)^{-1} \right\| \le Q_{\text{inv}}\left( \|z_0\|, \|\mathbf{Z}\|_{\mathcal{C}_{\mathbf{X}}^\alpha([0, t_0])} \right),
\]
which completes the proof.
\end{proof}

\begin{corollary}\label{cor:lin-local-linear-control-integrable}
Under Assumption~\ref{assump-F-integ}, for fixed $t_0\in\mathbb{R}_{>0}$, 
\begin{equation*}
M_{\mathrm{lin}}(\omega)
:=
\sup_{0\le t\le t_0}
\log^+\|\psi_t^\omega\|
+
\sup_{0\le t\le t_0}
\log^+\|(\psi_t^\omega)^{-1}\|\in L^1(\Omega).
\end{equation*}
\end{corollary}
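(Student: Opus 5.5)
The plan is to apply the two deterministic bounds at the stationary point, $z_0=Y_\omega$, on the interval $[0,t_0]$, and then use polynomial growth together with the moment hypotheses of Assumption~\ref{assump-F-integ}. The linearized cocycle is defined by $\psi_t^\omega=D_{Y_\omega}\phi_t^\omega$. Proposition~\ref{prop:lin-deterministic-derivative-bound} with $T=t_0$ and $z_0=Y_\omega$ gives
\[
\sup_{0\le t\le t_0}\log^+\|\psi_t^\omega\|\le Q_{\mathrm{lin}}\bigl(\|Y_\omega\|,\|\mathbf Z(\omega)\|_{\mathcal C^\alpha_{\mathbf X(\omega)}([0,t_0])},\|\mathbf X(\omega)\|_{\alpha,[0,t_0]}\bigr).
\]
Proposition~\ref{prop:lin-deterministic-inverse-bound} gives
\[
\sup_{0\le t\le t_0}\log^+\|(\psi_t^\omega)^{-1}\|\le Q_{\mathrm{inv}}\bigl(\|Y_\omega\|,\|\mathbf Z(\omega)\|_{\mathcal C^\alpha_{\mathbf X(\omega)}([0,t_0])}\bigr).
\]
In the inverse bound, the norm appearing in the statement is read as the controlled norm used in its proof.

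Assumption~\ref{assump-F-integ}(1) gives $F\in C_b^3$ and $F_0\in C_b^1$, which is more than the regularity these propositions require. The differentiability needed to define $\psi$ comes from Proposition~\ref{thm:solution-Ck-regularity} with $k=1$. If that proposition formally needs $C_b^4$, one can instead appeal to the $C^1$ flow for the lifted RDE via Theorem~\ref{thm:solution-equivalence}, as in the proof of Proposition~\ref{prop:lin-deterministic-derivative-bound}.

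Next, $M_{\mathrm{lin}}$ must be measurable. The map $\omega\mapsto\psi^\omega_t$ is measurable because $\phi$ is a measurable $C^1$-cocycle: derivatives are limits of difference quotients, and composing with the measurable $Y_\omega$ preserves measurability. The suprema over $t\in[0,t_0]$ can be restricted to rational $t$, since $t\mapsto\psi_t^\omega$ is continuous and inversion is continuous on invertible operators.

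For integrability, write $Q:=Q_{\mathrm{lin}}+Q_{\mathrm{inv}}$, an increasing polynomial-type function of the three random quantities $\|Y_\omega\|$, $\|\mathbf Z(\omega)\|_{\mathcal C^\alpha_{\mathbf X(\omega)}([0,t_0])}$ and $\|\mathbf X(\omega)\|_{\alpha,[0,t_0]}$. Then
\[
0\le M_{\mathrm{lin}}(\omega)\le C\bigl(1+\|Y_\omega\|^{N}+\|\mathbf Z(\omega)\|^{N}_{\mathcal C^\alpha_{\mathbf X(\omega)}([0,t_0])}+\|\mathbf X(\omega)\|^{N}_{\alpha,[0,t_0]}\bigr)
\]
for some $C$ and $N$, using $ab\le a^2+b^2$ on mixed monomials. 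Assumption~\ref{assump-F-integ}(2)--(3) puts each quantity in every $L^p$, so taking expectations gives $M_{\mathrm{lin}}\in L^1(\Omega)$. Hölder's inequality would also handle mixed monomials directly.

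The main obstacle is that the estimates must be genuinely polynomial. In the proof of Proposition~\ref{prop:lin-deterministic-derivative-bound}, the number of subintervals $\widetilde N$ grows like a power $1/(1-2\alpha)$ of a polynomial. Taking $\log^+$ of $(\cdots)^{\widetilde N}$ produces $\widetilde N\cdot\log(\text{polynomial})$, and this must still be dominated by a polynomial with finite degree. That holds because $\alpha\ne 1/2$ fixes the exponent $1/(1-2\alpha)$ and $\log x\le x$. The bound also feeds in the a priori estimate $Q_3$ of the solution norm $\|\phi^\omega(z_0)\|$, and $Q_3$ must likewise be polynomial in $\|z_0\|$ and the driver norms; I would check this against the cited \cite{LiGao2025} bound.
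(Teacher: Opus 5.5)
Your proposal is correct and is essentially the paper's argument: evaluate Propositions~\ref{prop:lin-deterministic-derivative-bound} and~\ref{prop:lin-deterministic-inverse-bound} at $z_0=Y_\omega$ on $[0,t_0]$, then use the polynomial-type growth of $Q_{\mathrm{lin}}+Q_{\mathrm{inv}}$ together with the moment hypotheses to conclude $M_{\mathrm{lin}}\in L^1(\Omega)$. Your version is the more faithful of the two: the paper writes both bounds as functions of $\|\mathbf Z(\omega)\|$ alone, so it never uses the moments of $\|Y_\omega\|$ and $\|\mathbf X(\omega)\|_{\alpha,[0,t_0]}$ from Assumption~\ref{assump-F-integ}, even though the propositions as stated depend on them, and it also omits the measurability check that you supply.
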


\begin{proof}
By Propositions~\ref{prop:lin-deterministic-derivative-bound} and~\ref{prop:lin-deterministic-inverse-bound},
\[
M_{\mathrm{lin}}(\omega)
\le
Q_{\mathrm{lin}}\bigl(\|\mathbf Z(\omega)\|_{\alpha;[0,t_0]}\bigr)
+
Q_{\mathrm{inv}}\bigl(\|\mathbf Z(\omega)\|_{\alpha;[0,t_0]}\bigr).
\]
Since
\[
\|\mathbf Z(\omega)\|_{\alpha;[0,t_0]}
\in
\bigcap_{p\ge1}L^p(\Omega),
\]
as well as $Q_{\mathrm{lin}}$ and $Q_{\mathrm{inv}}$ are polynomial-type functions, the right-hand side belongs to $L^1(\Omega)$.
Therefore $
M_{\mathrm{lin}}(\omega)\in L^1(\Omega)$.
\end{proof}

\subsection{Nonlinear regularity and deterministic derivative estimates}\label{subsec:manifold-nonlinear-regularity}
Having established deterministic bounds for the linearized cocycle, we now turn to the nonlinear dependence of the solution map on its associated initial condition. The key point is to control the variation of its Fr\'echet derivative locally and to obtain the tempered bounds required for the invariant manifold arguments.

\begin{proposition}\label{prop:manifold-derivative-lipschitz}
Let ${\bf \mathcal{Z}}$ be the canonical lifted rough path for the path $Z$ given in~\eqref{eq:lift-canonical-second-level}.  Then for every $t_0\in\mathbb{R}_{>0}$, there exists an increasing polynomial-type function $Q_{\mathrm{nl}}:\mathbb R_{\ge 0}^3\to\mathbb R_{\ge 0} $ such that, for any $y, z\in W\times \mathcal L(V,W)$ and $\rho\in(0,1]$ with $\|z\|\le\rho$,
\begin{equation}
\label{eq:manifold-derivative-lipschitz}
\sup_{0\le t\le t_0}
\left\|
D_{y+z}\phi_t^\omega
-
D_y\phi_t^\omega
\right\|
\le
f_{\rho,y}(\omega)\|z\|,
\end{equation}
where
\begin{equation*}
f_{\rho,y}(\omega)
:=
\exp
\left(
Q_{\mathrm{nl}}
\bigl(
\|y\|+\rho,
\|\mathcal Z(\omega)\|_{\alpha,[0,t_0]}, \|{\bf Z(\omega)}\|_{\mathcal C_{\bf X(\omega)}^\alpha([0,t_0])}
\bigr)
\right).
\end{equation*}
\end{proposition}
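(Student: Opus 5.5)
The plan is to estimate the difference of two solutions of the variational equation. The natural framework is the lifted RDE rather than the controlled one. By Theorem~\ref{thm:solution-equivalence}, the state path of $\phi^\omega(y)$ solves the RDE \eqref{eq:Z-RDE} driven by the canonical lift $\mathcal Z(\omega)$. By Proposition~\ref{thm:solution-Ck-regularity}, $J^y_t:=D_y\phi_t^\omega$ solves the linear equation
\[
dJ^y_t=DF(\phi_t^\omega(y))J^y_t\,d\mathcal Z_t+DF_0(\phi_t^\omega(y))J^y_t\,dt,\qquad J^y_0=\mathrm{Id},
\]
and the analogous equation holds for $J^{y+z}$. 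Set $\Delta_t:=J^{y+z}_t-J^y_t$. Subtracting the two equations gives
\[
d\Delta_t=DF(\phi_t^\omega(y+z))\Delta_t\,d\mathcal Z_t+DF_0(\phi_t^\omega(y+z))\Delta_t\,dt+dG_t,\qquad \Delta_0=0,
\]
with forcing term
\[
G_t=\sint_0^t\bigl(DF(\phi_r(y+z))-DF(\phi_r(y))\bigr)J^y_r\,d\mathcal Z_r+\sint_0^t\bigl(DF_0(\phi_r(y+z))-DF_0(\phi_r(y))\bigr)J^y_r\,dr.
\]
So $\Delta$ solves an inhomogeneous linear RDE with zero initial datum. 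By variation of constants, $\Delta_t=J^{y+z}_t\sint_0^t (J^{y+z}_r)^{-1}\,dG_r$. Alternatively, one can run the same Gr\"onwall-on-small-intervals scheme used in Proposition~\ref{prop:lin-deterministic-derivative-bound}.

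The estimate then proceeds in three steps.

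\textbf{Step 1 (a priori bounds).} From \eqref{phi-y-z-contin} and the bound $\|\phi^\omega(z_0)\|\le Q_3$ cited in the proof of Proposition~\ref{prop:lin-deterministic-derivative-bound}, we get
\[
\|\phi^\omega(y+z)-\phi^\omega(y)\|_{\mathcal C_{\mathcal Z}^\alpha([0,t_0])}\le \|z\|\exp\bigl(Q(\|y\|+\rho,\cdot)\bigr).
\]
Here we use $\|y+z\|\le\|y\|+\rho$. To pass from the sup bound to the full controlled norm, apply the standard rough-path continuity estimate (Theorem~\ref{thm:solution-pathwise-wellposedness}(2) with identical drivers). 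Also, Propositions~\ref{prop:lin-deterministic-derivative-bound} and~\ref{prop:lin-deterministic-inverse-bound} give controlled-norm bounds of the form $\exp(Q)$ for $J^y$, $J^{y+z}$ and $(J^{y+z})^{-1}$.

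\textbf{Step 2 (the forcing term).} Since $F\in C_b^3$, the difference $DF(\phi(y+z))-DF(\phi(y))$ is a controlled path whose norm is bounded linearly by the controlled norm of $\phi(y+z)-\phi(y)$. The bound uses $D^2F$ and $D^3F$, via the composition estimate \eqref{1.12} in its difference version. Combining this with the product estimate \eqref{1.7} and the rough integral bound (Lemma~\ref{thm:prelim-controlled-integral} applied to $\mathcal Z$) gives
\[
\|G\|_{\mathcal C^\alpha_{\mathcal Z}([0,t_0])}\le\|z\|\exp(Q).
\]
The drift part is elementary because $F_0\in C_b^1$ only enters through $DF_0$; Lipschitz continuity of $DF_0$ is needed here. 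If the paper's standing assumption is only $C^1_b$, I would assume $F_0\in C_b^2$, as in the $k=1$ case of Proposition~\ref{thm:solution-Ck-regularity}.

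\textbf{Step 3 (concatenation).} Estimate $\Delta$ by partitioning $[0,t_0]$ into $\widetilde N$ subintervals of length $\tau$. Choose $\tau$ so that the linear term on each piece is absorbed with factor $1-\epsilon$, exactly as in Proposition~\ref{prop:lin-deterministic-derivative-bound}. On each piece this yields
\[
\|\Delta\|_{[\tau_n,\tau_{n+1}]}\le M_\epsilon\bigl(\|\Delta_{\tau_n}\|+\|G\|\bigr).
\]
Iterating gives $\sup_t\|\Delta_t\|\le \widetilde N\, M_\epsilon^{\widetilde N}\|G\|$. Since $\widetilde N$ is polynomial in the norms, this is of the form $\exp(Q_{\mathrm{nl}})\|z\|$. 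Gathering the dependencies on $\|\mathcal Z\|_{\alpha,[0,t_0]}$ (the RDE driver) and $\|{\bf Z}\|_{\mathcal C^\alpha_{\bf X}}$ (which enters through Lemma~\ref{thm:lift-canonical-lift-rough-path} and \eqref{eq:lift-second-level-local-estimate}) gives the stated $f_{\rho,y}$.

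I expect the main obstacle to be Step 2. The difference-of-composition estimate in the controlled norm must be linear in $\|\phi(y+z)-\phi(y)\|$ with only polynomial prefactors, and the remainder term requires the third derivative of $F$. I would prove it directly via Taylor's formula with integral remainder, writing
\[
DF(a)-DF(b)=\sint_0^1D^2F(b+\theta(a-b))\,d\theta\,(a-b).
\]
Then apply \eqref{1.7} and \eqref{1.12} to the controlled path $\theta\mapsto D^2F(b+\theta(a-b))$, uniformly in $\theta$.
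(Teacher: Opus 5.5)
Your strategy is the one the paper's one-line proof points to. You subtract the two variational equations, bound the forcing $G$ linearly in $\|z\|$, and close with a short-interval Gr\"onwall/concatenation built on Proposition~\ref{prop:lin-deterministic-derivative-bound}. The step that fails as written is Step~2, which you run under $F\in C_b^3$. Write $a=\phi^\omega(y+z)$ and $b=\phi^\omega(y)$. The remainder of the controlled path $DF(a)-DF(b)$ contains
\[
\int_0^1(1-\theta)\bigl(D^3F(a_s+\theta a_{s,t})-D^3F(b_s+\theta b_{s,t})\bigr)[a_{s,t},a_{s,t}]\,d\theta .
\]
This term is $O(\|a-b\|_\infty|t-s|^{2\alpha})$ only when $D^3F$ is Lipschitz. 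If $D^3F$ is merely bounded and continuous, you get $o(1)\,|t-s|^{2\alpha}$ with no rate in $\|z\|$, so $\|G\|\le\|z\|\exp(Q)$ does not follow. Your Taylor device runs into the same wall: applying \eqref{1.12} to $D^2F(b+\theta(a-b))$ needs $D^2F\in C_b^2$, i.e.\ $F\in C_b^4$. For $F\in C_b^3$ that path need not even have a $2\alpha$-H\"older remainder. So you must assume $F\in C_b^4$, together with your (correct) upgrade $F_0\in C_b^2$. These are precisely the $k=1$ hypotheses of Proposition~\ref{thm:solution-Ck-regularity}. The paper's deferral is silent on this, and its standing hypotheses here ($F\in C_b^3$, $F_0\in C_b^1$) do not suffice for this route. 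With the upgrade, the rest of your plan goes through.

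There are also three smaller points. First, the constants you import depend on $\|\mathbf X\|_\alpha$. This holds for Proposition~\ref{prop:lin-deterministic-derivative-bound} and for part (2) of Theorem~\ref{thm:solution-pathwise-wellposedness}, where the dependence is through an unspecified function. But $f_{\rho,y}$ may not depend on $\|\mathbf X\|_\alpha$, and $\|\mathbf X\|_\alpha$ is not controlled by $\|\mathcal Z\|_\alpha$ and $\|\mathbf Z\|_{\mathcal C^\alpha_{\mathbf X}}$ (take $Z'\equiv0$). You should run all a priori, continuity and derivative estimates for the lifted RDE driven by $\mathcal Z$. There the local-Lipschitz-plus-concatenation argument gives explicit $\exp(Q(\|\mathcal Z\|_\alpha,\cdot))$ constants. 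Second, with that setup $\|\mathbf Z\|_{\mathcal C^\alpha_{\mathbf X}}$ enters through $\|Z'\|_\infty$ in the second component $DF(Y_t)[J_t\,\cdot\,]Z'_t$ of $D\phi_t^\omega$ on $W\times\mathcal L(V,W)$. Your sketch should also take the difference of this component. Third, do not copy the absorption in Proposition~\ref{prop:lin-deterministic-derivative-bound}: its small factor $\tau^{1-2\alpha}$ excludes $\alpha=1/2$. Instead, take smallness from the $|t-s|^{\alpha}$ in the rough-integral estimate, and split off $\|\Delta_{\tau_n}\|$ in the drift term. Then the bound covers all $\alpha\in(1/3,1/2]$.
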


\begin{proof}
The proof follows from Proposition~\ref{prop:lin-deterministic-derivative-bound} by applying the argument used in the proof of~\cite[Proposition~1.21]{GhaniVarzanehRiedel2025}.
\end{proof}

Temperedness is a property that enables us to separate random variables with subexponential growth from those that grow too rapidly. We now recall its precise definition.

\begin{definition}~\cite{Arnold1998}\label{def:sp-tempered-radius}
Let $(\Omega,\mathcal F,\mathbb P,\{\theta_t\}_{t\in\mathbb R})$ be an invertible measure-preserving dynamical system. 
A random variable $\rho:\Omega\to[0,\infty)$ is called {\bf tempered} (resp.~$\theta_{t_0}$-tempered) if
\[\lim_{t\to \pm\infty}\frac{1}{|t|}\log^{+}\rho(\theta_t\omega)=0,\qquad (\text{resp.~}
\lim_{|n|\to\infty}
\frac1{|n|}
\log^+\rho(\theta_{nt_0}\omega)=0),\]
for almost every $\omega$.
\end{definition}

To verify the temperedness of the nonlinear coefficient $f_{\rho,y}$ in~\eqref{eq:manifold-derivative-lipschitz}, we fix the local radius $\rho=1$ and $y=Y_\omega\in W\times \mathcal L(V,W)$,  
\[f_{\mathrm{nl}}:=f_{1,Y_\omega}:\Omega\to \mathbb{R},\qquad \omega\mapsto \exp\bigg(Q_{\mathrm{nl}}\bigl(\|Y_\omega\|+1,\|\mathcal Z(\omega)\|_{\alpha,[0,t_0]},
\|{\bf Z(\omega)}\|_{\mathcal C_{\bf X(\omega)}^\alpha([0,t_0])}\bigr)\bigg).\]

Next, we consider the temperedness of $f_{\mathrm{nl}}$.

\begin{proposition}\label{lem:manifold-fnl-tempered}
In the setting of Proposition~\ref{prop:manifold-derivative-lipschitz}, assume that
\begin{enumerate}
\item the stationary point satisfies
\[
Y\in\bigcap_{p\ge 1}L^p(\Omega;W\times \mathcal L(V,W));
\]

\item the lifted driver and the controlled driver satisfy
\[
\|\mathcal Z(\omega)\|_{\alpha,[0,t_0]}
\in
\bigcap_{p\ge 1}L^p(\Omega),\qquad \|{\bf Z(\omega)}\|_{\mathcal C_{\bf X(\omega)}^\alpha([0,t_0])}\in
\bigcap_{p\ge 1}L^p(\Omega).
\]
\end{enumerate}
Then the random variable $f_{\mathrm{nl}}$ is $\theta_{t_0}$-tempered.
\end{proposition}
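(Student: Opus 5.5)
Since $\log^+ f_{\mathrm{nl}}(\omega)=Q_{\mathrm{nl}}(\cdots)$, temperedness amounts to showing that
\[
\frac1n\, Q_{\mathrm{nl}}\bigl(\|Y_{\theta_{nt_0}\omega}\|+1,\ \|\mathcal Z(\theta_{nt_0}\omega)\|_{\alpha,[0,t_0]},\ \|{\bf Z}(\theta_{nt_0}\omega)\|_{\mathcal C_{\bf X(\theta_{nt_0}\omega)}^\alpha([0,t_0])}\bigr)\to 0
\]
almost surely as $|n|\to\infty$. Set
\[
g(\omega):=Q_{\mathrm{nl}}\bigl(\|Y_\omega\|+1,\|\mathcal Z(\omega)\|_{\alpha,[0,t_0]},\|{\bf Z}(\omega)\|_{\mathcal C_{\bf X(\omega)}^\alpha([0,t_0])}\bigr)\ge0 .
\]
Then $\log^+ f_{\mathrm{nl}}(\theta_{nt_0}\omega)=g(\theta_{nt_0}\omega)$. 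The claim therefore reduces to the standard fact that $g(\theta_{nt_0}\omega)/|n|\to0$ whenever $g\in L^1(\Omega)$ and $\theta_{t_0}$ is measure preserving.

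\textbf{Integrability of $g$.} Because $Q_{\mathrm{nl}}$ is of increasing polynomial type, there are $C,m$ with $g\le C(1+\|Y_\omega\|+\|\mathcal Z\|_{\alpha,[0,t_0]}+\|{\bf Z}\|_{\mathcal C^\alpha_{\bf X}})^m$. By assumption (1) the first variable lies in every $L^p$, and by assumption (2) so do the other two. Minkowski's inequality then gives $g\in L^1(\Omega)$, indeed $g\in L^p$ for all $p$. If only a bound on the controlled norm were available, one could instead use Lemma~\ref{thm:lift-canonical-lift-rough-path} to control $\|\mathcal Z\|$ by $M_{\mathrm{lift}}(\|{\bf X}\|,\|{\bf Z}\|)$. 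Here, however, assumption (2) supplies the bound directly.

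\textbf{Borel--Cantelli step.} Fix $\varepsilon>0$. Since $\theta_{nt_0}$ preserves $\mathbb P$,
\[
\sum_{n\in\mathbb Z}\mathbb P\bigl(g(\theta_{nt_0}\omega)>\varepsilon|n|\bigr)=\sum_{n\in\mathbb Z}\mathbb P\bigl(g>\varepsilon |n|\bigr)\le 1+\frac{2}{\varepsilon}\,\mathbb E[g]<\infty.
\]
By Borel--Cantelli, almost surely $g(\theta_{nt_0}\omega)\le\varepsilon|n|$ for all but finitely many $n$. Taking $\varepsilon=1/k$ and intersecting the resulting full-measure sets over $k\in\mathbb N$ yields $\limsup_{|n|\to\infty}\frac1{|n|}\log^+f_{\mathrm{nl}}(\theta_{nt_0}\omega)=0$ almost surely, which is $\theta_{t_0}$-temperedness in the sense of Definition~\ref{def:sp-tempered-radius}. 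Alternatively, one can apply Birkhoff's ergodic theorem to $g$ under $\theta_{t_0}$ and $\theta_{-t_0}$. The telescoping identity $g\circ\theta_{nt_0}/n=\frac{S_{n+1}}{n}-\frac{S_n}{n}\to0$ then holds even without ergodicity, since the limit is a conditional expectation.

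\textbf{Main obstacle.} The only genuinely delicate point is measurability of $g$, which Borel--Cantelli and stationarity require. I expect to handle it by noting that $\omega\mapsto\|\mathcal Z(\omega)\|_{\alpha,[0,t_0]}$ and $\omega\mapsto\|{\bf Z}(\omega)\|_{\mathcal C^\alpha}$ are suprema over rational pairs $(s,t)$, by continuity of the paths, and that $\mathbb Z_{s,t}(\omega)$ is a limit of Riemann sums of measurable quantities. The map $\omega\mapsto Y_\omega$ is measurable as a stationary point. The increasing polynomial $Q_{\mathrm{nl}}$ is continuous, so $g$ is measurable.
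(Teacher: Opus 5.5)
Your proposal is correct and follows the same route as the paper: use the moment assumptions and the polynomial growth of $Q_{\mathrm{nl}}$ to get $\log^+ f_{\mathrm{nl}}=Q_{\mathrm{nl}}(\cdots)\in L^1(\Omega)$, then conclude $\theta_{t_0}$-temperedness. The paper only asserts the implication from $L^1$ to temperedness, whereas you prove it with Borel--Cantelli under the measure-preserving shift (or Birkhoff) and also check measurability; these are additions, not a different argument.
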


\begin{proof}
By Proposition~\ref{prop:manifold-derivative-lipschitz}, the function $Q_{\mathrm{nl}}$ is polynomial-type and increasing, and the variables
\[\|Y_\omega\|+1,\qquad
\|\mathcal Z(\omega)\|_{\alpha,[0,t_0]},\qquad \|{\bf Z(\omega)}\|_{\mathcal C_{\bf X(\omega)}^\alpha([0,t_0])}\]
have moments of all orders by assumption.
Therefore every polynomial expression in these variables belongs to $L^1(\Omega)$. Hence $\log^{+}f_{\mathrm{nl}}\in L^1(\Omega).$ 
So, for almost every $\omega$, we get
\[
\frac1{|n|}
\log^+
f_{\mathrm{nl}}(\theta_{nt_0}\omega)
\to0, \quad \text{ as } |n|\to \infty.
\]
Thus $f_{\mathrm{nl}}$ is $\theta_{t_0}$-tempered.
\end{proof}

We now apply the deterministic estimate near the stationary point.

\begin{coro}\label{prop:manifold-derivative-estimate-stationary}
For every $z\in W\times \mathcal L(V,W)$ with $\|z\|\le1$, one has
\begin{equation}
\label{eq:manifold-derivative-estimate-stationary}
\sup_{0\le t\le t_0}
\left\|
D_{Y_\omega+z}\phi_t^\omega
-
D_{Y_\omega}\phi_t^\omega
\right\|
\le
f_{\mathrm{nl}}(\omega)\|z\|.
\end{equation}
where 
\begin{equation*}
f_{\mathrm{nl}}(\omega)
=\exp \bigg(
Q_{\mathrm{nl}}
\bigl(
\|Y_\omega\|+1,
\|\mathcal Z(\omega)\|_{\alpha,[0,t_0]},
\|{\bf Z(\omega)}\|_{\mathcal C_{\bf X(\omega)}^\alpha([0,t_0])}
\bigr)\bigg).
\end{equation*}
\end{coro}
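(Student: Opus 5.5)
This corollary is a direct specialization of Proposition~\ref{prop:manifold-derivative-lipschitz}, and the plan is to apply it with the base point $y=Y_\omega$ and the radius $\rho=1$.

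First, I fix $\omega\in\Omega$. I take $y:=Y_\omega\in W\times\mathcal L(V,W)$, where $Y_\omega$ is the stationary point from \eqref{eq:fixpt}; for fixed $\omega$ this is a deterministic element. Since $\|z\|\le 1$, the hypothesis $\|z\|\le\rho$ of Proposition~\ref{prop:manifold-derivative-lipschitz} holds with $\rho=1\in(0,1]$. The proposition is deterministic, because its proof runs pathwise through Proposition~\ref{prop:lin-deterministic-derivative-bound}. It therefore applies to the single rough path $\mathbf X(\omega)$, the controlled driver $\mathbf Z(\omega)$, and the canonical lift $\mathcal Z(\omega)$, which is well defined by Lemma~\ref{thm:lift-canonical-lift-rough-path}. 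This gives
\[
\sup_{0\le t\le t_0}\bigl\|D_{Y_\omega+z}\phi_t^\omega-D_{Y_\omega}\phi_t^\omega\bigr\|\le f_{1,Y_\omega}(\omega)\|z\|.
\]

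Second, I identify the constant. By the definition of $f_{\rho,y}$, with $\rho=1$ and $y=Y_\omega$ we get $f_{1,Y_\omega}(\omega)=\exp\bigl(Q_{\mathrm{nl}}(\|Y_\omega\|+1,\|\mathcal Z(\omega)\|_{\alpha,[0,t_0]},\|\mathbf Z(\omega)\|_{\mathcal C^\alpha_{\mathbf X(\omega)}([0,t_0])})\bigr)$. This is exactly the $f_{\mathrm{nl}}(\omega)$ defined just before Proposition~\ref{lem:manifold-fnl-tempered}, so \eqref{eq:manifold-derivative-estimate-stationary} follows.

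The only point needing care is that $y=Y_\omega$ depends on $\omega$. The proposition is stated for an arbitrary deterministic $y$ with a bound that depends only on $\|y\|$ through $Q_{\mathrm{nl}}$, so substituting $y=Y_\omega$ pointwise in $\omega$ is legitimate. No measurability or integrability is needed for this pathwise inequality; the integrability of $f_{\mathrm{nl}}$ belongs to Proposition~\ref{lem:manifold-fnl-tempered}, not to this corollary. I expect no real obstacle beyond this bookkeeping.
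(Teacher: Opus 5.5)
Your proposal is correct and follows the paper's proof: apply Proposition~\ref{prop:manifold-derivative-lipschitz} pathwise with $y=Y_\omega$ and $\rho=1$, and observe that $f_{1,Y_\omega}(\omega)$ is exactly $f_{\mathrm{nl}}(\omega)$. Your remark that the $\omega$-dependence of $Y_\omega$ is harmless, because the bound is applied for each fixed $\omega$, is a correct detail that the paper leaves implicit.
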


\begin{proof}
Apply Proposition~\ref{prop:manifold-derivative-lipschitz} with $y=Y_\omega,\,\rho=1.$ 
The coefficient $f_{\rho,y}(\omega)$ in~\eqref{eq:manifold-derivative-lipschitz} becomes exactly $f_{\mathrm{nl}}(\omega)$. Thus~\eqref{eq:manifold-derivative-estimate-stationary} follows.
\end{proof}

\section{Local invariant manifolds}
\label{sec:local-invariant-manifolds}
In this section, we construct local stable, unstable, and center manifolds around the stationary point of the solution cocycle. We first obtain the corresponding Oseledets splitting and establish the invariant manifolds for the time-$t_0$ cocycle, and then upgrade these discrete-time results to continuous time.
Throughout the section, we assume that the coefficients $(F,F_0)$ satisfy
\[F\in C_b^3(W,\mathcal L(U,W)),
\qquad
F_0\in C_b^1(W,W).\]

\subsection{Local stable, unstable and center manifolds}
We first establish the Oseledets splitting for the linearized cocycle and then use the resulting stable, unstable, and center subspaces to construct the corresponding local invariant manifolds. Throughout the remainder of the paper, we assume that $(\Omega,\mathcal F,\mathbb P,\{\theta_t\}_{t\in\mathbb R})$ is an ergodic invertible measure-preserving dynamical system.

\begin{lemma}~\label{lemma-29}
Under Assumption~\ref{assump-F-integ}, for $t\in\mathbb{R}_{\geq 0}$, let $$\psi_{\omega}^{t}:=D_{Y_{\omega}}\phi_{t}^{\omega}: W\times \mathcal L(V,W) \to W\times \mathcal L(V,W).$$ 
Then, on a full-measure set $\widetilde{\Omega}$ that is invariant under $\theta_t$, there 
exists a sequence of deterministic values~(i.e., Lyapunov exponents) $\mu_{k}<\cdots <\mu_{1}$, $\mu_{i}\in [-\infty ,\infty)$, 
and subspaces $H_{\omega}^{i}\subseteq W\times \mathcal L(V,W)$ such that
\begin{enumerate}
\item $W\times \mathcal L(V,W)=\oplus_{1\le i \le k}H_{\omega}^{i}$;
\item $\psi_{\omega}^{t}(H_{\omega}^{i})=H_{\theta_{t}\omega}^{i}$  for any $1\le i \le k$.
\item $\lim_{t\to\pm \infty}\frac{1}{t}\log\|\psi_{\omega}^t(\xi_{\omega})\|=\pm\mu_i $ if and only if $\xi_{\omega}\in H_\omega^i\setminus\{0\}$, where 
$$\psi_\omega^t(\xi_\omega):=(\psi_\omega^t)^{-1}(\xi_\omega),\qquad \forall t\in\mathbb R_{<0}.$$
\end{enumerate}
\end{lemma}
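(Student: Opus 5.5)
The statement is an instance of the multiplicative ergodic theorem (MET) for an invertible linear cocycle on the finite-dimensional space $E:=W\times\mathcal L(V,W)$. The plan is to reduce it to the classical Oseledets theorem for the discrete-time cocycle generated by $\psi^{t_0}_\omega$ over the map $\theta_{t_0}$, and then interpolate to continuous time using the bound on $[0,t_0]$ from Corollary~\ref{cor:lin-local-linear-control-integrable}.

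\textbf{Cocycle structure.} By Proposition~\ref{prop:solution-linearized-cocycle}, $\psi^{t+s}_\omega=\psi^t_{\theta_s\omega}\circ\psi^s_\omega$ for $s,t\ge0$, and each $\psi^t_\omega$ is invertible. Setting $\psi^{-t}_\omega:=(\psi^t_{\theta_{-t}\omega})^{-1}$ gives a two-sided linear cocycle on $E$ over the ergodic invertible system $(\Omega,\mathcal F,\mathbb P,\{\theta_t\})$. This convention should be reconciled with the notation in item~(3). Measurability of $\omega\mapsto\psi^t_\omega$ comes from the measurability of $\phi$ (Theorem~\ref{thm:solution-cocycle}), the differentiability in Proposition~\ref{thm:solution-Ck-regularity}, and the measurability of the stationary point $Y$.

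\textbf{Discrete-time integrability and Oseledets.} Let $A(\omega):=\psi^{t_0}_\omega$. Corollary~\ref{cor:lin-local-linear-control-integrable} gives $\log^+\|A\|$ and $\log^+\|A^{-1}\|$ in $L^1(\Omega)$. Its inputs are Propositions~\ref{prop:lin-deterministic-derivative-bound} and~\ref{prop:lin-deterministic-inverse-bound} evaluated at $z_0=Y_\omega$, together with the moment hypotheses of Assumption~\ref{assump-F-integ}. The polynomial bounds $Q_{\mathrm{lin}},Q_{\mathrm{inv}}$ involve $\|Y_\omega\|$, $\|\mathbf X\|$ and $\|\mathbf Z\|$, all of which have moments of every order.

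Ergodicity of $\theta_{t_0}$ is part of the standing hypothesis, since every $\theta_t$ with $t\neq0$ is ergodic. The invertible Oseledets theorem in finite dimension~\cite{Oseledets1968,Arnold1998} therefore applies. It yields a $\theta_{t_0}$-invariant full-measure set, deterministic exponents $\mu_k<\dots<\mu_1$, and a measurable splitting $E=\oplus_i H^i_\omega$ with $A(\omega)H^i_\omega=H^i_{\theta_{t_0}\omega}$ and $\frac1n\log\|\psi^{nt_0}_\omega\xi\|\to\pm\mu_i$ as $n\to\pm\infty$, exactly for $\xi\in H^i_\omega\setminus\{0\}$. Invertibility forces all $\mu_i$ to be finite. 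The lemma still permits $-\infty$, so that part is harmless.

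\textbf{Continuous-time upgrade (main obstacle).} Two things must be shown: the discrete limits hold along all real $t$, and the splitting is invariant under every $\psi^t$, not only under $\psi^{t_0}$.

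For the limits, write $t=nt_0+r$ with $r\in[0,t_0)$. Then
\[
\bigl|\log\|\psi^t_\omega\xi\|-\log\|\psi^{nt_0}_\omega\xi\|\bigr|\le M_{\mathrm{lin}}(\theta_{nt_0}\omega),
\]
using the cocycle identity and the bounds on $\psi^r$ and $(\psi^r)^{-1}$. Since $M_{\mathrm{lin}}\in L^1$, Birkhoff's theorem gives $\frac1n M_{\mathrm{lin}}(\theta_{nt_0}\omega)\to0$ almost surely, so the continuous-time limits agree with the discrete ones. Negative times are handled the same way via the inverse cocycle.

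For invariance, define $H^i_{\theta_t\omega}$ through the continuous-time characterization: $\xi\in H^i_\omega\setminus\{0\}$ if and only if the two-sided limits equal $\pm\mu_i$. Because $\psi^s_\omega$ is invertible, the cocycle identity shows that $\xi$ has growth rates $\pm\mu_i$ under $\psi_\omega$ if and only if $\psi^s_\omega\xi$ has them under $\psi_{\theta_s\omega}$. Hence $\psi^s_\omega H^i_\omega=H^i_{\theta_s\omega}$.

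It remains to make the full-measure set invariant under all $\theta_t$. Let $\Omega_0$ be the discrete good set and take $\widetilde\Omega:=\bigcap_{q\in\mathbb Q}\theta_q\Omega_0$ intersected with the Birkhoff set. Using the interpolation estimate, if $\omega\in\widetilde\Omega$ then $\theta_s\omega$ satisfies the continuous-time characterization for every real $s$. So $\widetilde\Omega':=\bigcup_{s\in\mathbb R}\theta_s\widetilde\Omega$ lies in the good set, is invariant, and has full measure. A measurable replacement of this set can be obtained as in~\cite{Arnold1998} or~\cite{GhaniVarzanehRiedel2025}.
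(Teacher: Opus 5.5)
Your proposal is correct. It uses the same analytic input as the paper, namely $M_{\mathrm{lin}}\in L^1(\Omega)$ from Corollary~\ref{cor:lin-local-linear-control-integrable} at $z_0=Y_\omega$. What differs is how the multiplicative ergodic theorem is invoked.

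\textbf{Comparison of routes.} The paper verifies $M_{\mathrm{lin}}\in L^1$ and additionally $\sup_{0\le t\le t_0}\log^+\|\psi^{t_0-t}_{\theta_t\omega}\|\in L^1(\Omega)$ (written in your notation). It then cites a continuous-time MET (\cite[Proposition~2.9]{GhaniVarzanehRiedel2025}, \cite[Theorem~1.21]{GVR2023}). That theorem directly yields the exponents, a splitting invariant under every $\psi^t$, and a $\theta_t$-invariant full-measure set. You instead apply the classical discrete Oseledets theorem to $\psi^{t_0}_\omega$ over $\theta_{t_0}$ and pass to real times by hand. The citation is shorter. Your route is self-contained, and it shows that the paper's second integrability condition is not really an extra hypothesis.

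\textbf{The negative-time step.} That second condition is exactly what this step needs, so write it out rather than saying ``the same way''. Take $t=nt_0+r$. Then $\psi^{-t}_\omega=(\psi^{r}_{\theta_{-t}\omega})^{-1}\psi^{-nt_0}_\omega$, and the base point is $\theta_{-t}\omega=\theta_s\zeta$ with $\zeta=\theta_{-(n+1)t_0}\omega$ and $s=t_0-r$, not $\theta_{-nt_0}\omega$. From $\psi^{t_0}_\zeta=\psi^{r}_{\theta_s\zeta}\psi^{s}_\zeta$ you get $\psi^{r}_{\theta_s\zeta}=\psi^{t_0}_\zeta(\psi^{s}_\zeta)^{-1}$ and $(\psi^{r}_{\theta_s\zeta})^{-1}=\psi^{s}_\zeta(\psi^{t_0}_\zeta)^{-1}$. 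Hence $\bigl|\log\|\psi^{-t}_\omega\xi\|-\log\|\psi^{-nt_0}_\omega\xi\|\bigr|\le M_{\mathrm{lin}}(\theta_{-(n+1)t_0}\omega)$, which is $o(n)$ almost surely. This also justifies the paper's condition more cleanly than its brief appeal to Proposition~\ref{prop:lin-deterministic-derivative-bound}.

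\textbf{Minor points.}
\begin{enumerate}
\item The discrete exponents of $\psi^{t_0}$ are $t_0\mu_i$, not $\mu_i$.
\item The intersection over rational shifts is unnecessary. The set of vectors with two-sided growth rate $\mu_i$ is defined intrinsically. For every real $s$ and every $\omega$, the two-sided cocycle identity shows that $\psi^s_\omega$ carries this set onto its counterpart at $\theta_s\omega$. So the set where the continuous-time characterization gives a direct-sum decomposition is automatically invariant under the whole flow; only its measurability needs the usual care.
\end{enumerate}
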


\begin{proof}
By Propositions~\ref{prop:lin-deterministic-derivative-bound} and~\ref{prop:lin-deterministic-inverse-bound}, for fixed $t_0\in\mathbb{R}_{>0}$, 
\begin{align*}
M_{\mathrm{lin}}(\omega)&\ 
:=
\sup_{0\le t\le t_0}
\log^+\|\psi_t^\omega\|
+
\sup_{0\le t\le t_0}
\log^+\|(\psi_t^\omega)^{-1}\|\in L^1(\Omega)\\
&\  \le
Q_{\mathrm{lin}}\bigl(\|\mathbf Z(\omega)\|_{\alpha;[0,t_0]}\bigr)
+
Q_{\mathrm{inv}}\bigl(\|\mathbf Z(\omega)\|_{\alpha;[0,t_0]}\bigr).
\end{align*}
Since
\[
\|\mathbf Z(\omega)\|_{\alpha;[0,t_0]}
\in
\bigcap_{p\ge 1}L^p(\Omega),
\]
as well as $Q_{\mathrm{lin}}$ and $Q_{\mathrm{inv}}$ are polynomial-type functions, the right-hand side belongs to $L^1(\Omega)$.
So, $M_{\mathrm{lin}}(\omega)\in L^1(\Omega)$.
Then by Proposition~\ref{prop:lin-deterministic-derivative-bound} again, 
$$\sup_{0\le t\le t_0}\log^{+}(\|\psi_{t_0-t}^{\theta_t\omega}\|)\in L^1(\Omega).$$
We thus complete the proof by applying~\cite[Proposition~2.9]{GhaniVarzanehRiedel2025} or~\cite[Theorem~1.21]{GVR2023}.
\end{proof}

For the linear Oseledets splitting of $W\times \mathcal L(V,W)$, we denote the linear subspaces of $W\times \mathcal L(V,W)$ by
\begin{align*}
S_{\omega}:=\oplus_{i:\mu_{i}\in\mathbb{R}_{<0}}H_{\omega}^{i},\quad U_{\omega}:=\oplus_{i:\mu_{i}\in\mathbb{R}_{>0}}H_{\omega}^{i},\quad C_{\omega}:=\oplus_{i:\mu_{i}=0}H_{\omega}^{i}.
\end{align*}

We first recall the notion of the local stable manifold.

\begin{definition}\label{def-stabManifold}~\cite[Theorem~2.10]{GVR2023}
Let $\phi$ be a Fr\'echet differentiable cocycle acting on the
measurable field of Banach spaces
$\{E_\omega\}_{\omega\in\Omega}$ with
$
	E_\omega=W\times\mathcal L(V,W),
$
and let $Y$ be a stationary solution of $\phi$. 
Then the {\bf local stable manifold} of $Y_\omega$ with rate $v$ is
defined by the set
\begin{align*}
		\mathcal W_{\loc}^{s,\nu}(\omega):=\Big\{Y_{\omega}+\Pi_{\omega}^0[\Gamma(v_{\omega})]:v_\omega\in S_\omega, \|v_{\omega}\|<\frac{1}{2h_1^\nu(\omega)}\min\{\frac{1}{2}h^{-1}(\frac{1}{2h_2^\nu(\omega)}),\inf_{n\in \mathbb N_0}\exp(nt_0\nu)\}\Big\},
	\end{align*}
	where $\Gamma(v_\omega)\in\Lambda_\omega^v$ denotes the unique solution to the fixed-point equation $I_\omega(v_\omega,\Gamma)=\Gamma$, and $\Pi_{\omega}^{j}:\prod_{i\in \mathbb N_0}E_{\theta^{i}\omega}\longrightarrow E_{\theta^{j}\omega}$ denotes the $j$-th coordinate projection.
Here, $h(x)=x^r g(x)$ in the sense of~\eqref{eq-GVR2023-assum2.5}, $I_\omega$ is defined in~\cite[Lemma~2.6]{GVR2023} and  
\begin{align*}
h_1^\nu(\omega):=&\ \sup_{n\in \mathbb N_0}  \exp(nt_0\nu)\|\psi^{\omega}_{nt_0}|_{S_{\omega}}\|  , \\
h_2^\nu(\omega):=&\ \sup_{n\in \mathbb N_0} \bigg( \exp(nt_0\nu) \sum_{0\le j\le n-1}\exp\Big(-j\nu(1+r)t_0\Big)\cdot f(\theta_{jt_0}\omega) \|\psi^{\theta_{(j+1)t_0}\omega}_{(n-j)t_0}|_{S_{\theta_{(j+1)t_0}\omega}} \| \cdot \|\Pi_{S_{\theta_{(j+1)t_0}\omega}\parallel \tilde{U}_{\theta_{(j+1)t_0}\omega}}\|\\
		&\ +\exp(nt_0\nu)\sum_{j\ge n}\exp\Big(-j\nu(1+r)t_0\Big)\cdot f(\theta_{jt_0}\omega) \|(\psi_{(j-n+1)t_0}^{\theta_{nt_0}\omega}|_{\tilde{U}_{\theta_{(j+1)t_0}\omega}})^{-1}\|\cdot \|\Pi_{\tilde{U}_{\theta_{(j+1)t_0}\omega}\parallel S_{\theta_{(j+1)t_0}\omega}}\|\bigg ),
	\end{align*}
	with the tempered measurable function $f:\Omega\to [0,\infty)$. Here $\tilde{U}_{\omega}:=U_{\omega}\oplus C_{\omega}$ and for $E=F\oplus H$,
\begin{align*}
    		\left\| \Pi_{F\parallel H} \right\|
    		:= \sup_{\substack{f\in F,\ h\in H\\ f+h\neq 0}}
    		\frac{\|f\|}{\|f+h\|} < \infty.
    	\end{align*}
\end{definition}

Now, we turn to the stable directions and obtain the corresponding local stable manifold.

\begin{theorem}[Local stable manifold]
\label{thm:manifold-local-stable}
Under the conditions of Lemma~\ref{lemma-29}, let $\mu_{i}\in\mathbb{R}_{<0}$ for some $1\le i \le k$. Fix an arbitrary time step $t_{0}\in\mathbb{R}_{>0}$ 
and a stable rate $\nu \in (0, -\mu^{-})$, where $\mu^{-}:=\max\{\mu_{i}: \mu_{i}\in\mathbb{R}_{<0}\}$. 
Then there exists a $\theta_{t_{0}}$-invariant subset $\widetilde{\Omega}\subseteq \Omega$ of 
full measure such that the following statements hold for any $\omega\in \widetilde{\Omega}$.
\begin{enumerate}
\item There exist two positive and finite random variables $\rho_{1}^{\nu}(\omega)$ and $\rho_{2}^{\nu}(\omega)$ such that
\[
\liminf_{p\to \infty}\frac 1p\log \rho_{i}^{\nu}(\theta_{pt_{0}}\omega)\geq 0,\qquad i=1,2,
\]
and 
\begin{align*}
&\ \Big\{ z\in W\times \mathcal L(V,W): \sup_{n\in\mathbb{N}_{0}}\exp(nt_{0}\nu)\|\phi_{nt_{0}}^{\omega}(z)-Y_{\theta_{nt_{0}}\omega}\|<\rho_{1}^{\nu}(\omega)\Big\}\subseteq \mathcal W_{\loc}^{s,\nu}(\omega)\\
&\ \subseteq \Big\{z\in W\times \mathcal L(V,W): \sup_{n\in\mathbb{N}_{0}}\exp(nt_{0}\nu)\|\phi_{nt_{0}}^{\omega}(z)-Y_{\theta_{nt_{0}}\omega}\|<\rho_{2}^{\nu}(\omega)\Big\}.
\end{align*}

\item The tangent space $T_{Y_{\omega}}\mathcal W_{\loc}^{s,\nu}(\omega)$ to the stable manifold $\mathcal W_{\loc}^{s,\nu}(\omega)$ at $Y_{\omega}$ is $S_{\omega}$, and for $n\geq N(\omega)$, 
\[\phi_{nt_{0}}^{\omega}(\mathcal W_{\loc}^{s,\nu}(\omega))\subseteq 
\mathcal W_{\loc}^{s,\nu}(\theta_{nt_{0}}\omega).\]

\item For $0< \nu_{1}\le \nu_{2}<-\mu^{-}$,  $$\mathcal W_{\loc}^{s,\nu_{2}}(\omega)\subseteq\mathcal W_{\loc}^{s,\nu_{1}}(\omega),$$ and for $n\geq N(\omega)$, 
\[\phi_{nt_{0}}^{\omega}(\mathcal W_{\loc}^{s,\nu_{1}}(\omega))\subseteq \mathcal W_{\loc}^{s,\nu_{2}}(\theta_{nt_{0}}\omega).\]

\item For any $z\in \mathcal W_{\loc}^{s,\nu}(\omega)$,
\[
\limsup_{n\to \infty }\frac 1n\log\|\phi_{nt_{0}}^{\omega}(z)-Y_{\theta_{nt_{0}}\omega}\|\le t_{0}\mu^{-} 
\]
and
\[
\limsup_{n\to \infty}\frac 1n \log \Big[ \sup\Big \{\frac{\|\phi_{nt_{0}}^{\omega}(\bar{z})-\phi_{nt_{0}}^{\omega}(z)\|}{\|\bar{z}-z\|},\, 
\bar{z}\neq z,\, \text{and } \bar{z},z\in \mathcal W_{\loc}^{s,\nu}(\omega) \Big \} \Big ]\le t_{0}\mu^{-}.
\]
\end{enumerate}
\end{theorem}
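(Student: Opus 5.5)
The plan is to obtain Theorem~\ref{thm:manifold-local-stable} by applying the abstract stable manifold theorem \cite[Theorem~2.10]{GVR2023} to the discrete-time cocycle $\Phi_\omega:=\phi_{t_0}^\omega$ over the ergodic map $\theta_{t_0}$. The state fields are $E_\omega=W\times\mathcal L(V,W)$. The work specific to CRDEs is to verify its hypotheses from the controlled data. The stationary point $Y_\omega$ is a fixed point of $\Phi$ along the orbit, i.e.\ $\Phi_\omega(Y_\omega)=Y_{\theta_{t_0}\omega}$. The linearization $\psi^\omega_{t_0}=D_{Y_\omega}\phi_{t_0}^\omega$ is a linear cocycle by Proposition~\ref{prop:solution-linearized-cocycle}. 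The $C^1$-regularity of $\Phi_\omega$ follows from Corollary~\ref{cor:solution-Ck-cocycle} under $F\in C_b^3$, $F_0\in C_b^1$; if needed, a derivative of higher order will be assumed.

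\textbf{Step one} supplies the linear input. Lemma~\ref{lemma-29}, built on Corollary~\ref{cor:lin-local-linear-control-integrable}, gives $\log^+\|\psi^\omega_{t_0}\|\in L^1$ and an Oseledets splitting with exponents $\mu_i$. Restricting to integer times $nt_0$, the discrete exponents are $t_0\mu_i$, and the splitting becomes $E_\omega=S_\omega\oplus\tilde U_\omega$ with $\tilde U_\omega=U_\omega\oplus C_\omega$. The standard consequences of the multiplicative ergodic theorem then show the following:
\begin{itemize}
\item the projections $\Pi_{S\parallel\tilde U}$ and $\Pi_{\tilde U\parallel S}$ are tempered;
\item for $\nu<-\mu^-$, the quantity $h_1^\nu(\omega)$ in Definition~\ref{def-stabManifold} is finite and tempered;
\item the restricted inverses on $\tilde U$ have at most subexponential growth.
\end{itemize}

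\textbf{Step two} supplies the nonlinear input. Define $P_\omega(z)=\Phi_\omega(Y_\omega+z)-Y_{\theta_{t_0}\omega}-\psi^\omega_{t_0}z$. By the mean value theorem and Corollary~\ref{prop:manifold-derivative-estimate-stationary}, for $\|z_1\|,\|z_2\|\le1$,
\[
\|P_\omega(z_2)-P_\omega(z_1)\|\le \sup_{\lambda\in[0,1]}\|D_{Y_\omega+z_1+\lambda(z_2-z_1)}\phi_{t_0}^\omega-D_{Y_\omega}\phi_{t_0}^\omega\|\,\|z_2-z_1\|\le f_{\mathrm{nl}}(\omega)(\|z_1\|+\|z_2\|)\|z_2-z_1\|.
\]
This is the assumption of \cite{GVR2023} of the form $h(x)=x^rg(x)$ with $r=1$ and $f=f_{\mathrm{nl}}$. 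Proposition~\ref{lem:manifold-fnl-tempered} shows that $f_{\mathrm{nl}}$ is $\theta_{t_0}$-tempered, using the moment bounds in Assumption~\ref{assump-F-integ}. Those bounds give moments of $\|\mathcal Z\|$ through Lemma~\ref{thm:lift-canonical-lift-rough-path}, since $M_{\mathrm{lift}}$ can be taken polynomial.

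Having both inputs, \cite[Theorem~2.10]{GVR2023} yields a $\theta_{t_0}$-invariant full-measure set $\widetilde\Omega$. It gives the manifold $\mathcal W^{s,\nu}_{\loc}(\omega)$ as the graph defined by the fixed point $\Gamma$. It also gives the sandwich by the sets with radii $\rho_1^\nu$ and $\rho_2^\nu$, which are built from $h_1^\nu$, $h_2^\nu$ and $h^{-1}$, and hence are tempered from below. Since $\Gamma$ is $C^1$ with derivative $0$ at the origin, the tangent space at $Y_\omega$ is $S_\omega$. Invariance for $n\ge N(\omega)$, monotonicity in $\nu$, and the exponential contraction rates $t_0\mu^-$ in (4) are likewise conclusions of that theorem, read off in our notation. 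For (4), let $\nu\uparrow-\mu^-$ and use the nesting in (3).

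\textbf{Main obstacle.} The delicate point is the temperedness of $h_2^\nu$. This requires the summability of $f_{\mathrm{nl}}(\theta_{jt_0}\omega)$ against the exponential weights, which holds only because $f_{\mathrm{nl}}$ grows subexponentially along the orbit. I would first check that $\log^+ f_{\mathrm{nl}}\in L^1$ indeed gives $\frac1n\log^+f_{\mathrm{nl}}(\theta_{nt_0}\omega)\to0$ via Birkhoff's theorem, as argued in Proposition~\ref{lem:manifold-fnl-tempered}. I would then check that the polynomial-type $Q_{\mathrm{nl}}$ inside the exponential keeps $\log f_{\mathrm{nl}}$ integrable. After that, the estimates in \cite{GVR2023} apply verbatim.
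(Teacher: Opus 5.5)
Your proposal is correct and follows the paper's proof. Both apply \cite[Theorem~2.10]{GVR2023} to the time-$t_0$ map and verify only its two inputs: $\log^+\|\psi^\omega_{t_0}\|\in L^1(\Omega)$, from the deterministic derivative bounds, and $\|P_\omega(z_2)-P_\omega(z_1)\|\le f_{\mathrm{nl}}(\omega)(\|z_1\|+\|z_2\|)\|z_2-z_1\|$, via the mean value formula and Corollary~\ref{prop:manifold-derivative-estimate-stationary}, with $f_{\mathrm{nl}}$ being $\theta_{t_0}$-tempered by Proposition~\ref{lem:manifold-fnl-tempered}. The paper takes $h(x)=x(x+1)$ where you take $r=1$, $g\equiv 1$, which makes no difference.

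Your remark that the moments of $\|\mathcal Z\|_{\alpha,[0,t_0]}$ needed in Proposition~\ref{lem:manifold-fnl-tempered} follow from Assumption~\ref{assump-F-integ} once $M_{\mathrm{lift}}$ is checked to be polynomial fills a hypothesis the paper leaves implicit. Your hedge on smoothness is also warranted: Corollary~\ref{cor:solution-Ck-cocycle} with $k=1$ is stated under $F\in C_b^4$, $F_0\in C_b^2$, not $C_b^3$, $C_b^1$.
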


\begin{proof}
Define
\begin{equation*}
\Phi_\omega(z)
:=
\phi_{t_0}^\omega(Y_\omega+z)-Y_{\theta_{t_0}\omega}.
\end{equation*}
Then the linear part of $\Phi_\omega$ is
\begin{align*}
A_\omega
:=
D_0\Phi_\omega
=
D_{Y_\omega}\phi_{t_0}^\omega
=
\psi_{t_0}^\omega,
\end{align*}
and the nonlinear remainder is
\begin{align*}
P_\omega(z)
:=
\Phi_\omega(z)-A_\omega z.
\end{align*}
Equivalently, we write
\begin{equation*}
P_\omega(z)
=
\phi_{t_0}^\omega(Y_\omega+z)
-
\phi_{t_0}^\omega(Y_\omega)
-
\psi_{t_0}^\omega z.
\end{equation*}
By~\cite[Theorem~2.10]{GVR2023}, it suffices to verify that
\begin{enumerate}
\item~\label{it:proof-stabmanifold-1} the linearized cocycle $\psi$ around the stationary point $Y$ satisfies the integrability condition
\begin{align}~\label{rela-GVR2023-1.1}
\log^{+}\|\psi_{\omega}\|\in L^{1}(\Omega);
\end{align}
\item~\label{it:proof-stabmanifold-2} for any $z_1,\, z_2\in  W\times \mathcal L(V,W)$ with $\|z_1\|,\,\|z_2\|<1$, 
\begin{align}~\label{eq-GVR2023-assum2.5}
\|P_{\omega}(z_2)-P_\omega(z_1)\|\le \|z_2-z_1\| f(\omega) h(\|z_1\|+\|z_2\|)
\end{align}
where the random variable $f:\Omega\to \mathbb R$ is $\theta_{t_0}$-tempered and $h(x)=x^rg(x)$ for some $r>0$ where $g:\mathbb R\to \mathbb R_{\ge 0}$ is an increasing $C^1$ function.
\end{enumerate}

First of all, the~\eqref{it:proof-stabmanifold-1} follows from~Proposition~\ref{prop:lin-deterministic-derivative-bound}. Next, we are going to prove the~\eqref{it:proof-stabmanifold-2}. Here we take $h(x):=x(x+1)$. This choice is suitable, since $h$ can be written as $h(x)=x^r g(x)$ with $r=1\ge 0$, where $g(x)=x+1$ is an increasing $C^1$-function. 
Moreover, we set $f:=f_{\mathrm{nl}} : \Omega\to \mathbb{R}$. By Proposition~\ref{lem:manifold-fnl-tempered}, this $f$ satisfies the condition in~\eqref{it:proof-stabmanifold-2}. 
Now, it remains to verify that 
\begin{equation*}
\|P_\omega(z_2)-P_\omega(z_1)\|
\le
f_{\mathrm{nl}}(\omega)
(\|z_1\|+\|z_2\|)
\|z_2-z_1\|.
\end{equation*}
By the definition of $P_{\omega}$, 
\begin{align*}
P_{\omega}(z_{2})-P_{\omega}(z_{1})=& \ \phi_{t_{0}}^{\omega}(Y_{\omega}+z_{2})-\psi_{t_{0}}^{\omega}(z_{2})-\phi_{t_{0}}^{\omega}(Y_{\omega}+z_{1})+\psi_{t_{0}}^{\omega}(z_{1})\\
=&\ \phi_{t_{0}}^{\omega}(Y_{\omega}+z_{2})-\phi_{t_{0}}^{\omega}(Y_{\omega}+z_{1})-\psi_{t_{0}}^{\omega}(z_{2}-z_{1})\\
=& \ \phi_{t_{0}}^{\omega}(Y_{\omega}+z_{2})-\phi_{t_{0}}^{\omega}(Y_{\omega}+z_{1})-D_{Y_{\omega}}\phi_{t_{0}}^{\omega}(z_{2}-z_{1})\\
=& \ \sint_{0}^{1}D_{Y_{\omega}+z_{\theta}}\phi_{t_{0}}^{\omega}(z_{2}-z_{1})d\theta-D_{Y_{\omega}}\phi_{t_{0}}^{\omega}(z_{2}-z_{1})\hspace{1cm}(\text{by the mean value formula})\\
=& \ \sint_{0}^{1}(D_{Y_{\omega}+z_{\theta}}\phi_{t_{0}}^{\omega}-D_{Y_{\omega}}\phi_{t_{0}}^{\omega})(z_{2}-z_{1})d\theta\\
\le & \sint_{0}^{1}f_{\mathrm{nl}}(\omega)\|z_{\theta}\| d\theta \|z_{2}-z_{1}\| \hspace{1cm}(\text{by \eqref{eq:manifold-derivative-estimate-stationary}})\\
\leq & \sint_{0}^{1}f_{\mathrm{nl}} (\omega)d\theta \|z_{2}-z_{1}\|(\|z_{1}\|+\|z_{2}\|)\\
& \ \hspace{1cm}(\text{by }\|z_{\theta}\|\leq (1-\theta)\|z_{1}\|+\theta\|z_{2}\|\leq \|z_{1}\|+\|z_{2}\|\leq 1)\\
=&\  f_{\mathrm{nl}}(\omega)(\|z_{1}\|+\|z_{2}\|)\|z_{2}-z_{1}\|.
\end{align*}
This completes the proof.
\end{proof}

The unstable manifold is formulated in terms of backward orbits. 

\begin{definition}~\cite[Theorem~2.17]{GVR2023}
With the same setting as in Definition~\ref{def-stabManifold}, the {\bf local unstable manifold} of $Y_\omega$ with rate $\nu$ is the set
	\begin{align*}
		\mathcal W_{\loc}^{u,\nu}(\omega):=\Big\{Y_{\omega}+\Pi_{\omega}^0[\Gamma(u_{\omega})]: u_{\omega}\in U_\omega, \|u_{\omega}\|<\frac{1}{2\tilde{h}_1^\nu(\omega)}\min\{\frac{1}{2}h^{-1}(\frac{1}{2\tilde{h}_2^\nu(\omega)}),\inf_{n\in \mathbb N_0}\exp(nt_0 \nu)\}\Big\},
	\end{align*}
where 
\begin{align*}
\tilde{h}_{1}^\nu(\omega):=&\ \sup_{n\in \mathbb N_0}\exp(nt_0\nu)\|(\psi_{nt_0}^{\theta_{nt_0}\omega}|_{ U_{\omega}})^{-1}\|,\\
\tilde{h}_2^\nu(\omega):=&\ \sup_{n\in \mathbb N_0} \bigg(\exp(nt_0\nu)\sum_{k\ge n}\exp\Big(-\nu(k+1)(1+r)t_0\Big) \cdot f(\theta_{(k+1)t_0}\omega) \cdot \|\psi_{(k-n)t_0}^{\theta_{kt_0} \omega}|_{\tilde{S}_{\theta_{kt_0}\omega}}\|\cdot \|\Pi_{\tilde{S}_{\theta_{kt_0} \omega}\parallel U_{\theta_{kt_0} \omega}}\|\\
&\ +\exp(nt_0\nu)\sum_{0\le k\le n-1}\exp\Big(-\nu(n-k)(1+r)t_0\Big) \cdot f(\theta_{(n-k)t_0}\omega) \cdot \|(\psi_{(k+1)t_0}^{\theta_{n}\omega}|_{ U_{\theta_{(n-1-k)t_0}\omega}})^{-1}\|\\
&\ \cdot \|\Pi_{ U_{\theta_{(n-1-k)t_0}\omega}\parallel\tilde{S}_{\theta_{(n-1-k)t_0}\omega}}\| \bigg).
	\end{align*}
Here $\tilde{S}_{\omega}:=S_\omega \oplus C_\omega$.
\end{definition}

We next consider the unstable directions and construct the corresponding local unstable manifold.

\begin{theorem}[Local unstable manifold]
\label{thm:manifold-local-unstable}
Under the conditions of Lemma~\ref{lemma-29}, let $\mu_{1}\in\mathbb{R}_{>0}$ and $0<\nu <\mu^{+}$, where $\mu^+:=\min\{\mu_i:\mu_i\in\mathbb{R}_{>0}\}$. 
Fix an arbitrary time step $t_{0}\in\mathbb{R}_{>0}$. Then there exist a $\theta_{t_0}$-invariant subset $\widetilde{\Omega}\subseteq \Omega$ of full measure 
and a family of immersed submanifolds $\mathcal W_{\mathrm{loc}}^{u,\nu}(\omega)$ of $W\times \mathcal L(V,W)$ such that the following statement is valid for any $\omega\in \widetilde{\Omega}$. 
\begin{enumerate}
\item There exist two positive and finite random variables $\tilde{\rho}_{1}^{\nu}(\omega)$ and $\tilde{\rho}_{2}^{\nu}(\omega)$ such that
\[
\liminf_{p\to -\infty}\frac 1p\log \tilde{\rho}_{i}^{\nu}(\theta_{pt_{0}}\omega)\geq 0,\; i=1,2,
\]
and
\renewcommand{\arraystretch}{1.0}
\[
\begin{aligned}
	&\Bigl\{ z_{\omega} \in W \times \mathcal L(V,W) :\ 
	\exists \{z_{\theta_{-nt_0}\omega}\}_{n\in\mathbb{N}_{\ge 1}}
	\text{ such that } \\
	&\qquad \phi_{mt_0}^{\theta_{-nt_0}\omega}(z_{\theta_{-nt_0}\omega})
	= z_{\theta_{(m-n)t_0}\omega},\quad
	\text{for all }0\le m\le n,\text{ and}\\
	&\qquad \sup_{n\in\mathbb{N}_{\ge 0}}\exp(nt_0\nu)
	\|z_{\theta_{-nt_0}\omega}-Y_{\theta_{-nt_0}\omega}\|
	< \tilde{\rho}_1^\nu(\omega) \Bigr\}
	\subseteq \mathcal W_{\mathrm{loc}}^{u,\nu}(\omega) \\
	&\subseteq \Bigl\{ z_{\omega} \in W \times \mathcal L(V,W) :\ 
	\exists \{z_{\theta_{-nt_0}\omega}\}_{n\ge 1}
	\text{ such that } \\
	&\qquad \phi_{mt_0}^{\theta_{-nt_0}\omega}(z_{\theta_{-nt_0}\omega})
	= z_{\theta_{(m-n)t_0}\omega},\quad
	\text{for all }0\le m\le n,\text{ and}\\
	&\qquad \sup_{n\in\mathbb{N}_{\ge 0}}\exp(nt_0\nu)
	\|z_{\theta_{-nt_0}\omega}-Y_{\theta_{-nt_0}\omega}\|
	< \tilde{\rho}_2^\nu(\omega) \Bigr\}.
\end{aligned}
\]

\item $T_{Y_{\omega}}\mathcal W_{\mathrm{loc}}^{u,\nu}(\omega)=U_{\omega}$ and for any $n\ge N(\omega)$,
\[\mathcal W_{\mathrm{loc}}^{u,\nu}(\omega)\subseteq \phi_{nt_{0}}^{\theta_{-nt_{0}}\omega}(\mathcal W_{\mathrm{loc}}^{u,\nu}(\theta_{-nt_{0}}\omega)).\]

\item For $0<\nu_{1}\le \nu_{2}<\mu^{+}$,  
\[\mathcal W_{\mathrm{loc}}^{u,\nu_{2}}(\omega)\subseteq \mathcal W_{\mathrm{loc}}^{u,\nu_{1}}(\omega),\] 

\item For $n\ge N(\omega)$,  
    \[\mathcal W_{\mathrm{loc}}^{u,\nu_{1}}(\omega)\subseteq \phi_{nt_{0}}^{\theta_{-nt_{0}}\omega}(\mathcal W_{\mathrm{loc}}^{u,\nu_{2}}(\theta_{-nt_{0}}\omega)).\]
    
\item For any $z_{\omega}\in \mathcal W_{\mathrm{loc}}^{u,\nu}(\omega)$,  
\[\limsup_{n\to \infty}\frac 1n\log\|z_{\theta_{-nt_{0}}\omega}-Y_{\theta_{-nt_{0}}\omega}\|\le -t_{0}\mu^{+},\]
\[\limsup_{n\to -\infty} \frac 1n\log\left[\sup\left\{\frac{\|\bar{z}_{\theta_{-nt_{0}}\omega}-z_{\theta_{-nt_{0}}\omega}\|}{\|\bar{z}_{\omega}-z_{\omega}\|}, \bar{z}_{\omega}\neq z_{\omega}, \text{ and }\bar{z}_{\omega}, z_{\omega}\in \mathcal W_{\mathrm{loc}}^{u,\nu}(\omega)\right\}\right]\le -t_{0}\mu^{+}.\]
\end{enumerate}
\end{theorem}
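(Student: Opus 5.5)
The plan follows the stable-manifold proof and reduces the statement to the abstract unstable manifold theorem \cite[Theorem~2.17]{GVR2023}. That theorem is stated for discrete-time cocycles on a measurable field of Banach spaces. Here the field is the constant one, $E_\omega=W\times\mathcal L(V,W)$, with the time-$t_0$ cocycle generated by $\phi_{t_0}$. As in the proof of Theorem~\ref{thm:manifold-local-stable}, set
\[
\Phi_\omega(z):=\phi_{t_0}^\omega(Y_\omega+z)-Y_{\theta_{t_0}\omega},\qquad
A_\omega:=\psi_{t_0}^\omega,\qquad
P_\omega(z):=\Phi_\omega(z)-A_\omega z.
\]
Stationarity gives $\Phi_\omega(0)=0$. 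The local unstable manifold at $Y_\omega$ is then $Y_\omega$ plus the unstable manifold of the zero solution of the cocycle $\Phi$.

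I then have to check the hypotheses of the abstract theorem. These are the same ones used for the stable case, plus the invertibility and integrability needed on the unstable side.

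\begin{enumerate}
\item \textbf{Linear integrability.} I need $\log^+\|\psi_{t_0}^\omega\|\in L^1(\Omega)$ and $\log^+\|(\psi_{t_0}^\omega)^{-1}\|\in L^1(\Omega)$. Both follow from Corollary~\ref{cor:lin-local-linear-control-integrable}, which rests on Propositions~\ref{prop:lin-deterministic-derivative-bound} and~\ref{prop:lin-deterministic-inverse-bound} together with Assumption~\ref{assump-F-integ}. The backward constructions need $\psi_{t_0}^\omega$ restricted to $U_\omega$ to be invertible. This holds because each $\psi_t^\omega$ is invertible, as recorded before Proposition~\ref{prop:lin-deterministic-inverse-bound}.
\item \textbf{Oseledets splitting.} Lemma~\ref{lemma-29} supplies the splitting on a $\theta_{t_0}$-invariant full-measure set $\widetilde\Omega$. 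The unstable space is $U_\omega=\oplus_{\mu_i>0}H^i_\omega$ and the complementary space is $\tilde S_\omega=S_\omega\oplus C_\omega$. The hypothesis $\mu_1>0$ guarantees $U_\omega\neq\{0\}$. The gap $\mu^+>\nu>0$ is exactly what the abstract theorem requires for its exponential dichotomy with rate $\nu$.
\item \textbf{Nonlinear remainder bound.} For $\|z_1\|,\|z_2\|<1$ I need
\[
\|P_\omega(z_2)-P_\omega(z_1)\|\le f_{\mathrm{nl}}(\omega)\,(\|z_1\|+\|z_2\|)\,\|z_2-z_1\|.
\]
This is verified verbatim as in the stable case: apply the mean value formula and Corollary~\ref{prop:manifold-derivative-estimate-stationary}, with $h(x)=x(x+1)$, so $r=1$ and $g(x)=x+1$.
\end{enumerate}

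The main obstacle is that the unstable construction uses the nonlinear bound along the backward orbit, at the points $\theta_{-nt_0}\omega$. So I need $f_{\mathrm{nl}}$ to be tempered in the two-sided sense, with control of $\frac1{|n|}\log^+f_{\mathrm{nl}}(\theta_{nt_0}\omega)$ as $n\to-\infty$ as well as $n\to+\infty$. Proposition~\ref{lem:manifold-fnl-tempered} states the limit for $|n|\to\infty$. It relies on $\log^+f_{\mathrm{nl}}\in L^1$, which holds by the moment assumptions on $Y$, $\mathcal Z$ and $\mathbf Z$. Birkhoff's theorem applied to the invertible map $\theta_{t_0}$ and to its inverse $\theta_{-t_0}$ then gives the backward limit. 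The needed ingredients are measure preservation and the full moments in Assumption~\ref{assump-F-integ}. For the lifted driver $\mathcal Z$, the moments follow from Lemma~\ref{thm:lift-canonical-lift-rough-path}, since $M_{\mathrm{lift}}$ has polynomial growth.

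Once these inputs are verified, the abstract result yields the following conclusions.

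\begin{enumerate}
\item \textbf{Manifold and sandwich property.} The family of immersed submanifolds is $\mathcal W^{u,\nu}_{\loc}(\omega)$, defined through the fixed point $\Gamma(u_\omega)$ of the backward Lyapunov--Perron operator. It satisfies the two-sided sandwich characterization by backward orbits, with radii $\tilde\rho^\nu_{1,2}$. These radii are built from $\tilde h^\nu_1,\tilde h^\nu_2$, and the temperedness of $\tilde h^\nu_{1,2}$ gives the stated $\liminf$ property.
\item \textbf{Tangency.} $T_{Y_\omega}\mathcal W^{u,\nu}_{\loc}(\omega)=U_\omega$ follows because $\Gamma$ is differentiable at $0$ with derivative the inclusion of $U_\omega$.
\item \textbf{Invariance and monotonicity in $\nu$.} Backward invariance for $n\ge N(\omega)$, the monotonicity in $\nu$, and the inclusions between rates $\nu_1\le\nu_2$ come directly from the abstract theorem.
\item \textbf{Backward contraction.} The rate $-t_0\mu^+$ follows from the exponential backward contraction on $U_\omega$ together with the quadratic smallness of $P_\omega$.
\end{enumerate}

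Translating back by $z\mapsto Y_\omega+z$, using $\phi^{\theta_{-nt_0}\omega}_{mt_0}$ and stationarity, gives the statement.
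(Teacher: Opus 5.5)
Your proposal is correct and follows the paper's route: both reduce the statement to \cite[Theorem~2.17]{GVR2023} by checking the integrability condition and the nonlinear remainder bound exactly as in the stable case, with $f=f_{\mathrm{nl}}$ and $h(x)=x(x+1)$. Your extra points only spell out what the paper leaves implicit: the two-sided temperedness of $f_{\mathrm{nl}}$ via Birkhoff applied to $\theta_{\pm t_0}$, the invertibility on $U_\omega$, and the moments of $\mathcal Z$. For the last of these, you need $M_{\mathrm{lift}}$ to have polynomial growth, which holds by the sewing estimate, although the paper only calls $M_{\mathrm{lift}}$ increasing.
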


\begin{proof}
By~\cite[Theorem~2.17]{GVR2023}, it suffices to verify that~\eqref{rela-GVR2023-1.1} and~\eqref{eq-GVR2023-assum2.5} hold. 
These conditions are verified as in the proof of Theorem~\ref{thm:manifold-local-stable}, which completes the proof.
\end{proof}

We finally turn to the center directions and recall the notion of a local center manifold.

\begin{definition}\cite[Definition~3.11]{center2025}
Let $Y:\Omega\to W\times \mathcal L(V,W)$ be the stationary point of the solution cocycle $\phi$ in~(\mref{eq:solcocycle}). The {\bf local center manifold} of $Y_\omega$ with rate $\nu$ is the set
\begin{align*}
\mathcal W_{\mathrm{loc}}^{c,\nu}(\omega):=&\ \Big\{Y_{\omega}+\xi_{\omega}\in W\times \mathcal L(V,W): \exists \Gamma\in \Sigma_{\omega}^\nu \text{ with } \Pi _{\omega}^0[\Gamma]=\xi_{\omega}\text{ and }\phi^{\theta_{nt_0}\omega}_{mt_0}(\Pi_{\omega}^n[\Gamma]+Y_{\theta_{nt_0}\omega})\\
&\ =\Pi_{\omega}^{n+m}[\Gamma]+Y_{\theta_{(n+m)t_0}\omega},\; \forall m,n \in \mathbb N_0 \Big\},
\end{align*}
where 
\[\Sigma_{\omega}^\nu:= \Big\{\Gamma\in  W\times \mathcal L(V,W):\|\Gamma\|=\sup_{j\in\mathbb N_0}\|\Pi_{\omega}^j[\Gamma]\|\exp(-\nu|j|t_0)<\infty \Big\}.\]
\end{definition}

Regarding the existence of the center manifold, we have the following result.

\begin{theorem}[Local center manifold]
\label{thm:manifold-local-center}
Under the conditions of Lemma~\ref{lemma-29}, assume that for some $1\le i_{c} \le k$, we have $\mu_{i_{c}}=0$. Fix any time step $t_{0}\in\mathbb{R}_{>0}$ and $0<\nu <\min\{\mu_{i_{c}-1},-\mu_{i_{c}+1}\}$ with $\mu_{0}=\infty$ if $i_{c}=1$. Let $\mathbb{N}t_{0}=\{nt_{0}:n\in \mathbb N_0 \}$. Then there exist a $\theta_{t_{0}}$-invariant subset $\widetilde{\Omega}\subseteq \Omega$ of full measure, a continuous cocycle $\bar{\phi}:\mathbb N t_{0}\times \widetilde{\Omega}\times W\times \mathcal L(V,W)\to W\times \mathcal L(V,W)$ and a positive random variable $\rho^{c}:\widetilde{\Omega}\to (0,\infty)$ such that the following statements is true. 
\begin{enumerate}
	\item $$\liminf_{n\to \infty}\frac 1n \log \rho^{c}(\theta_{nt_{0}}\omega)\geq 0.$$
	\item For any $\omega\in \widetilde{\Omega}$, there exists a function $$h_{\omega}^{c}: C_{\omega}\to \mathcal W_{\mathrm{loc}}^{c,\nu}(\omega)\subseteq W\times \mathcal L(V,W),\qquad v_{\omega}\mapsto h_\omega^c(v_\omega):=\Pi_\omega^0(\Gamma),$$
	where $\Gamma$ is the unique fixed point that satisfies $I_\omega(v_\omega,\Gamma)=\Gamma$ and the map $I_\omega:C_\omega\times \Sigma_\omega^\nu\cap B(0,R(\omega))\to \Sigma_\omega^\nu$ is defined in~\cite[Proposition~3.5]{center2025}, such that the following statements hold:
\begin{enumerate}
\item $h_{\omega}^{c}$ is a homeomorphism and is Lipschitz continuous.
\item for any $k\in \mathbb N$, if the map $\phi_{t_{0}}^{\omega}$ is of class $C^{k}$, then the manifold $\mathcal W_{\mathrm{loc}}^{c,\nu}(\omega)$ is of class $C^{k-1}$.
    
\item $\mathcal W_{\mathrm{loc}}^{c,\nu}(\omega)$ is $\bar{\phi}$-invariant, i.e., for any $n\in \mathbb N_0$, we have
\[\bar{\phi}_{nt_{0}}^{\omega}(\mathcal W_{\mathrm{loc}}^{c,\nu}(\omega))\subseteq \mathcal W_{\mathrm{loc}}^{c,\nu}(\theta_{nt_{0}}\omega).\] 
\item for any $z_{\omega}\in \mathcal W_{\mathrm{loc}}^{c,\nu}(\omega)$, there exists a sequence $\{z_{\theta_{-nt_{0}}\omega}\}_{n\in\mathbb N_0}$, such that, if for any negative integer $j$, we define $\bar{\phi}_{jt_{0}}^{\omega}(z_{\omega}):=z_{\theta_{jt_{0}}\omega}$, then for any integer $j$ and $k\in \mathbb N_0$,
    \[
    \bar{\phi}_{kt_{0}}^{\theta_{jt_{0}}\omega}(\bar{\phi}_{jt_{0}}^{\omega}(z_{\omega}))
    =\bar{\phi}_{(k+j)t_{0}}^{\omega}(z_{\omega}),
    \]  
and 
    \[
    \sup_{j}\exp(-\nu |j|)\|\bar{\phi}_{jt_{0}}^{\omega}(z_{\omega})-Y_{\theta_{jt_{0}}\omega}\|<\infty.
    \]
\end{enumerate}
\end{enumerate}  
\end{theorem}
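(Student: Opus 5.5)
The plan follows the proofs of Theorems~\ref{thm:manifold-local-stable} and~\ref{thm:manifold-local-unstable}. We reduce the statement to the abstract center manifold theorem of~\cite{center2025} (the result behind \cite[Definition~3.11]{center2025} and \cite[Proposition~3.5]{center2025}). It is applied to the discrete-time cocycle $\Phi_\omega(z)=\phi_{t_0}^\omega(Y_\omega+z)-Y_{\theta_{t_0}\omega}$ on the constant field $E_\omega=W\times\mathcal L(V,W)$. Its linear part is $A_\omega=\psi_{t_0}^\omega$ and its remainder is $P_\omega(z)=\phi_{t_0}^\omega(Y_\omega+z)-\phi_{t_0}^\omega(Y_\omega)-\psi_{t_0}^\omega z$. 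The abstract theorem needs three inputs:
\begin{enumerate}
\item integrability $\log^+\|\psi_{t_0}^\omega\|\in L^1(\Omega)$;
\item an Oseledets splitting of $E_\omega$ into $S_\omega\oplus C_\omega\oplus U_\omega$ with a spectral gap $\nu<\min\{\mu_{i_c-1},-\mu_{i_c+1}\}$ around $\mu_{i_c}=0$;
\item a tempered local Lipschitz bound for $P_\omega$.
\end{enumerate}
Once these hold, the theorem yields the modified cocycle $\bar\phi$, the tempered radius $\rho^c$, the fixed-point map $I_\omega$ on $C_\omega\times\Sigma^\nu_\omega\cap B(0,R(\omega))$, and the graph map $h^c_\omega$ with properties (a)--(d).

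Input (1) and the stronger two-sided integrability of $M_{\mathrm{lin}}$ follow from Corollary~\ref{cor:lin-local-linear-control-integrable}. Input (2) is supplied by Lemma~\ref{lemma-29}, applied on the ergodic system. Since $\mu_{i_c}=0$, the subspace $C_\omega=H^{i_c}_\omega$ is nontrivial. The condition on $\nu$ is exactly the gap separating $C_\omega$ from $S_\omega$ and $U_\omega$, with the convention $\mu_0=\infty$ covering the case without unstable directions.

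For input (3), I reuse the computation from the proof of Theorem~\ref{thm:manifold-local-stable}. The mean value formula and Corollary~\ref{prop:manifold-derivative-estimate-stationary} give, for $\|z_1\|,\|z_2\|<1$,
\[
\|P_\omega(z_2)-P_\omega(z_1)\|\le f_{\mathrm{nl}}(\omega)\,(\|z_1\|+\|z_2\|)\,\|z_2-z_1\|,
\]
and Proposition~\ref{lem:manifold-fnl-tempered} shows that $f_{\mathrm{nl}}$ is $\theta_{t_0}$-tempered.

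The main obstacle is that center manifold theorems usually need a globally Lipschitz nonlinearity with small constant. This is why the statement speaks of the localized cocycle $\bar\phi$ rather than $\phi$. I would follow the cutoff construction of~\cite{center2025}. Take a smooth cutoff $\chi$ equal to $1$ on $B(0,1)$ and vanishing outside $B(0,2)$, and a tempered random radius $R(\omega)$ built from $f_{\mathrm{nl}}$ and the Oseledets projection norms. Then set
\[
\bar P_\omega(z)=\chi\bigl(z/R(\omega)\bigr)\,P_\omega(z),
\qquad
\bar\phi^\omega_{t_0}(Y_\omega+z)=Y_{\theta_{t_0}\omega}+\psi^\omega_{t_0}z+\bar P_\omega(z).
\]
The bound above makes $\mathrm{Lip}(\bar P_\omega)\lesssim f_{\mathrm{nl}}(\omega)R(\omega)$. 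Choosing $R$ as the reciprocal of a tempered majorant of $f_{\mathrm{nl}}$, times a small constant, makes this small relative to the gap. Temperedness of $R^{-1}$ is what gives $\liminf\frac1n\log\rho^c(\theta_{nt_0}\omega)\ge0$ in item (1).

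The remaining items come from the abstract theorem, with these justifications:
\begin{itemize}
\item \textbf{Continuity of $\bar\phi$.} It follows from the continuity of the solution map in Theorem~\ref{thm:solution-pathwise-wellposedness} together with estimate~\eqref{phi-y-z-contin}.
\item \textbf{Regularity of the manifold.} The $C^{k-1}$ claim in (b) uses that $\phi_{t_0}^\omega$, and hence $\bar\phi$ (the cutoff being smooth), is $C^k$ by Proposition~\ref{thm:solution-Ck-regularity}.
\item \textbf{Cocycle property.} The identity in (d) uses the cocycle property from Theorem~\ref{thm:solution-cocycle} restricted to $\mathbb N t_0$.
\end{itemize}

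Measurability of $R$, the projections, and $\Gamma$ is inherited from the measurable Oseledets splitting. The full-measure $\theta_{t_0}$-invariant set $\widetilde\Omega$ is taken as the intersection of the sets from Lemma~\ref{lemma-29} and Proposition~\ref{lem:manifold-fnl-tempered}.
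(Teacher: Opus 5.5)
Your proposal is correct and follows the paper's route. The paper likewise invokes \cite[Theorem~3.14]{center2025} and only verifies the integrability condition \eqref{rela-GVR2023-1.1} and the tempered nonlinear bound \eqref{eq-GVR2023-assum2.5}, exactly as in the proof of Theorem~\ref{thm:manifold-local-stable}. Your cutoff construction and the resulting smallness of $\mathrm{Lip}(\bar P_\omega)$ describe what happens inside that abstract theorem, which itself supplies $\bar\phi$ and $\rho^c$; this is helpful explanation rather than an extra step the proof needs.
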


\begin{proof}
By \cite[Theorem 3.14]{center2025}, it is sufficient to check that~\eqref{rela-GVR2023-1.1} and \eqref{eq-GVR2023-assum2.5} hold.
The argument proceeds along the same lines as for Theorem~\ref{thm:manifold-local-stable}, which completes the proof.
\end{proof}

\subsection{Finite-time Lipschitz estimate and continuous-time upgrade}
\label{subsec:manifold-continuous-time}
The invariant manifolds obtained above are initially constructed for the time-$t_0$ cocycle. We now establish a finite-time Lipschitz estimate on $[0,t_0]$ and use it to extend the discrete-time manifold properties to continuous time.

\begin{proposition}
\label{prop:manifold-tempered-finite-lipschitz}
Define 
$$L_{\mathrm{loc}}:\Omega\to \mathbb R,\qquad \omega\mapsto L_{\mathrm{loc}}(\omega)
:=
\sup_{0\le r\le t_0}\|\psi_r^\omega\|
+
f_{\mathrm{nl}}(\omega).$$
Then there exists a positive tempered random radius $\rho_L(\omega)$ such that, whenever
\[
\|y_i-Y_\omega\|\le\rho_L(\omega),
\qquad
i=1,2,
\]
one has
\begin{equation}
\label{eq:manifold-tempered-finite-lipschitz}
\sup_{0\le r\le t_0}
\|\phi_r^\omega(y_2)-\phi_r^\omega(y_1)\|
\le
L_{\mathrm{loc}}(\omega)\|y_2-y_1\|.
\end{equation}
Moreover, $L_{\mathrm{loc}}$ is tempered in the sense of Definition~\ref{def:sp-tempered-radius}.
\end{proposition}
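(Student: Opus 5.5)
The Lipschitz bound will come from the mean value formula combined with the derivative-variation estimate of Corollary~\ref{prop:manifold-derivative-estimate-stationary}. Temperedness of $L_{\mathrm{loc}}$ will then follow from integrability of $\log^+$ of each summand and a Birkhoff-type argument, as in Proposition~\ref{lem:manifold-fnl-tempered}.

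\emph{Lipschitz bound.} I choose the radius $\rho_L(\omega):=\tfrac12$, a positive constant and hence trivially tempered; any deterministic radius at most $\tfrac12$ would do. Set $z_i:=y_i-Y_\omega$, so $\|z_i\|\le \tfrac12$, and let $z_\theta:=(1-\theta)z_1+\theta z_2$. Then $\|z_\theta\|\le \|z_1\|+\|z_2\|\le 1$. For each $r\in[0,t_0]$ the mean value formula gives
\[
\phi_r^\omega(y_2)-\phi_r^\omega(y_1)=\sint_0^1 D_{Y_\omega+z_\theta}\phi_r^\omega\,(y_2-y_1)\,d\theta .
\]
I will write $D_{Y_\omega+z_\theta}\phi_r^\omega=\psi_r^\omega+\bigl(D_{Y_\omega+z_\theta}\phi_r^\omega-D_{Y_\omega}\phi_r^\omega\bigr)$. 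The first term is bounded by $\sup_{0\le r\le t_0}\|\psi_r^\omega\|$. The second is bounded by $f_{\mathrm{nl}}(\omega)\|z_\theta\|\le f_{\mathrm{nl}}(\omega)$, using \eqref{eq:manifold-derivative-estimate-stationary}, which holds uniformly in $r\in[0,t_0]$. Integrating over $\theta$ and taking the supremum over $r$ yields \eqref{eq:manifold-tempered-finite-lipschitz} with constant $L_{\mathrm{loc}}(\omega)$.

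\emph{Temperedness.} I will use $\log^+(a+b)\le \log 2+\log^+a+\log^+b$, which gives
\[
\log^+L_{\mathrm{loc}}(\omega)\le \log 2+\sup_{0\le r\le t_0}\log^+\|\psi_r^\omega\|+\log^+ f_{\mathrm{nl}}(\omega).
\]
The first random term is dominated by $M_{\mathrm{lin}}\in L^1(\Omega)$ from Corollary~\ref{cor:lin-local-linear-control-integrable}. The second is $Q_{\mathrm{nl}}(\cdots)$, which lies in $L^1(\Omega)$ by the moment assumptions, exactly as in Proposition~\ref{lem:manifold-fnl-tempered}. So $\log^+L_{\mathrm{loc}}\in L^1(\Omega)$, and the standard Birkhoff consequence gives $\frac1{|n|}\log^+L_{\mathrm{loc}}(\theta_{nt_0}\omega)\to0$ almost surely, i.e.\ $\theta_{t_0}$-temperedness.

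\emph{Main obstacle.} The hard step is upgrading this to continuous-time temperedness, $t\to\pm\infty$ along all real $t$, as required in Definition~\ref{def:sp-tempered-radius}. For this I will control
\[
\sup_{0\le s\le t_0}\log^+L_{\mathrm{loc}}(\theta_s\omega)
\]
by an integrable variable. By the cocycle relations \eqref{eq:cocycle-relation}, \eqref{shift-eq} and stationarity, $\|\mathbf X(\theta_s\omega)\|_{\alpha,[0,t_0]}\le\|\mathbf X(\omega)\|_{\alpha,[0,2t_0]}$, and the same holds for $\mathbf Z$ and $\mathcal Z$. The $\psi$-term is handled through the cocycle identity $\psi_r^{\theta_s\omega}=\psi_{r+s}^\omega(\psi_s^\omega)^{-1}$ on $[0,2t_0]$, which is controlled by Propositions~\ref{prop:lin-deterministic-derivative-bound} and \ref{prop:lin-deterministic-inverse-bound} on $[0,2t_0]$. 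The term $\|Y_{\theta_s\omega}\|=\|\phi_s^\omega(Y_\omega)\|$ is bounded by the a priori bound $Q_3$ on the solution. I will assume the moment hypotheses on $[0,2t_0]$; these follow from those on $[0,t_0]$ via \eqref{1.6}. This makes the supremum integrable, and writing $t=nt_0+s$ then yields temperedness along all real $t$.
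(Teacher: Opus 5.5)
Your Lipschitz bound and your $\theta_{t_0}$-temperedness argument are the paper's proof. The paper also takes a constant radius $\rho_L\le 1$ and splits $D_{Y_\omega+z_\theta}\phi_r^\omega$ into $\psi_r^\omega$ plus a variation term bounded by $f_{\mathrm{nl}}$. It then concludes from $\sup_r\|\psi_r^\omega\|\le\exp(M_{\mathrm{lin}}(\omega))$, with $M_{\mathrm{lin}}\in L^1$, together with the $\theta_{t_0}$-temperedness of $f_{\mathrm{nl}}$. Your inequality $\log^+(a+b)\le\log 2+\log^+a+\log^+b$ simply spells out its ``calculus of tempered variables''.

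The difference is your last step. The paper's proof stops at \eqref{eq:Lloc-tempered}, so it only establishes the discrete-time, $\theta_{t_0}$ version, even though the statement says ``tempered''. That discrete version is also all that Corollary~\ref{cor:manifold-continuous-time-stable} uses. You instead aim at temperedness along all real $t$, by dominating $\sup_{0\le s\le t_0}\log^+L_{\mathrm{loc}}(\theta_s\omega)$ by an integrable variable. This is the right strategy, and your ingredients do the job:
\begin{itemize}
\item the window $[s,s+t_0]\subseteq[0,2t_0]$;
\item the identity $\psi_r^{\theta_s\omega}=\psi_{r+s}^\omega(\psi_s^\omega)^{-1}$, controlled by the derivative and inverse bounds on $[0,2t_0]$;
\item the relation $Y_{\theta_s\omega}=\phi_s^\omega(Y_\omega)$, bounded through $Q_3$.
\end{itemize}
So, once completed, your route proves more than the paper does.

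Three points need care in that extra step.
\begin{itemize}
\item \eqref{shift-eq} controls the increments of $Z$ and the values of $Z'$, hence also $R^Z$, but not $\|Z\|_{\infty}$. Yet $\|Z\|_\infty$ is part of $\|\mathbf Z\|_{\mathcal C^\alpha_{\mathbf X}}$ and therefore enters $f_{\mathrm{nl}}$. Consequently the bound $\|\mathbf Z(\theta_s\omega)\|_{\mathcal C^\alpha_{\mathbf X(\theta_s\omega)}([0,t_0])}\le\|\mathbf Z(\omega)\|_{\mathcal C^\alpha_{\mathbf X(\omega)}([0,2t_0])}$ is not automatic. You need either the normalization $Z_0\equiv0$, or the observation that all the estimates depend only on $Z'$, $R^Z$ and increments, so $f_{\mathrm{nl}}$ can be defined with the shift-covariant seminorm.
\item The shift property of $\mathbb Z$ has to be checked. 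It follows from the Riemann sums defining $\mathbb Z$, exactly as in Lemma~\ref{lemma-def-tilde-Y}.
\item The moments on $[0,2t_0]$ come from splitting at $t_0$ together with $\mathbb P\circ\theta_{t_0}^{-1}=\mathbb P$. Use Chen's relation for $\mathbf X$ and $\mathcal Z$, and \eqref{1.6} only for the controlled path $\mathbf Z$.
\end{itemize}
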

\begin{proof}
Let
\[
y_\theta:=y_1+\theta(y_2-y_1)=: Y_\omega+z_\theta,
\qquad
0\le\theta\le1.
\]
By $\|y_i-Y_\omega\|\le \rho_{L}(\omega)$ for any $i=1,2$, we choose $\rho_L (\omega)\le 1$ so that $\|z_\theta\|\le 1$. Then
\begin{align*}
\|D_{y_\theta}\phi_r^\omega\|=& \|D_{Y_\omega+z_\theta}\phi_r^\omega\|\\
\le & \|D_{Y_\omega}\phi_r^\omega\|+\|D_{Y_\omega+z_\theta}\phi_r^\omega-D_{Y_\omega}\phi_r^\omega\|\\
\le & \|D_{Y_\omega}\phi_r^\omega\|+f_{\mathrm{nl}}(\omega)\|z_\theta\|\hspace{1cm}(\text{by }\eqref{eq:manifold-derivative-estimate-stationary})\\
\le & \|\psi_r^\omega\|+f_{\mathrm{nl}}(\omega).\hspace{1cm}(\text{by }\|z_\theta\|\le 1),
\end{align*}
and so
\begin{align*}
\sup_{0\le r\le t_0}\|\phi_r^\omega(y_2)-\phi_r^\omega(y_1)\|\le & \sup_{0\le r\le t_0}\sint_0^1\|D_{y_\theta}\phi_r^\omega(y_2 -y_1)\|d\theta \hspace{1cm}(\text{by the mean value formula})\\
\le & \sup_{0\le r\le t_0}\|D_{y_\theta}\phi_r^{\omega}\|\, \|y_2 -y_1\|\\
\le & (\sup_{0\le r\le t_0}\|\psi_r^\omega\|+f_{\mathrm{nl}}(\omega)) \|y_2 -y_1\|\\
=& L_{\mathrm{loc}}(\omega)\|y_2-y_1\|.
\end{align*}
Thus~\eqref{eq:manifold-tempered-finite-lipschitz} holds. It remains to prove temperedness of $L_{\mathrm{loc}}$. For the term $\sup_{0\le r\le t_0}\|\psi_r^\omega\|$, by Corollary~\ref{cor:lin-local-linear-control-integrable}, we get
\begin{equation*}
\sup_{0\le r\le t_0}\|\psi_r^\omega\|\le \exp\bigg(\sup_{0\le t\le t_0}\log^+\|\psi_t^\omega\|+\sup_{0\le t\le t_0}
\log^+\|(\psi_t^\omega)^{-1}\|\bigg)=\exp(M_{\mathrm{lin}}(\omega)), \quad \text{for fixed } t_0\in\mathbb{R}_{>0}.
\end{equation*}
Since $M_{\mathrm{lin}}(\omega)\in L^1(\Omega)$, its exponential is tempered.
The term $f_{\mathrm{nl}}$ is tempered by Proposition~\ref{lem:manifold-fnl-tempered}.
By the calculus of tempered variables, their sum is tempered.
Therefore 
\begin{align}~\label{eq:Lloc-tempered}
	\lim_{|n|\to\infty}
	\frac1{|n|}
	\log^+L_{\mathrm{loc}}(\theta_{nt_0}\omega)=0.
\end{align} It completes the proof. 
\end{proof}

Next we consider the property of stable manifold in the case of continuous time.

\begin{corollary}
\label{cor:manifold-continuous-time-stable}
Let $0<\nu<\nu_1$, and let $y\in \mathcal W_{\mathrm{loc}}^{s,\nu}(\omega)$.
Then, after possibly shrinking the stable radius,
\begin{align*}
\sup_{t\ge0}
\exp(\nu_1 t)
\|\phi_t^\omega(y)-Y_{\theta_t\omega}\|
<\infty.
\end{align*}
\end{corollary}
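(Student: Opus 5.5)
The plan is to combine the discrete-time decay on the local stable manifold (Theorem~\ref{thm:manifold-local-stable}) with the tempered finite-time Lipschitz bound of Proposition~\ref{prop:manifold-tempered-finite-lipschitz}, which controls the motion between consecutive sampling times. Throughout, I read the statement with the implicit restriction $\nu_1<-\mu^-$, so that $\nu<\nu_1<-\mu^-$; without it the claimed rate cannot follow from the stable manifold theorem. I work on the full-measure $\theta_{t_0}$-invariant set $\widetilde\Omega$ of Theorem~\ref{thm:manifold-local-stable}, intersected with the set on which $L_{\mathrm{loc}}$ and $\rho_L$ are $\theta_{t_0}$-tempered.

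\emph{Step 1: discrete-time decay.} Fix $\nu_2$ with $\nu_1<\nu_2<-\mu^-$. By item (4) of Theorem~\ref{thm:manifold-local-stable},
\[
\limsup_{n\to\infty}\tfrac1n\log\|\phi_{nt_0}^\omega(y)-Y_{\theta_{nt_0}\omega}\|\le t_0\mu^-<-t_0\nu_2 .
\]
Hence there are $N_1(\omega)$ and $C_1(\omega)<\infty$ such that $d_n:=\|\phi_{nt_0}^\omega(y)-Y_{\theta_{nt_0}\omega}\|\le C_1(\omega)e^{-\nu_2 nt_0}$ for all $n\in\mathbb N_0$. The finitely many $n< N_1$ are absorbed into $C_1$.

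\emph{Step 2: interpolation on $[nt_0,(n+1)t_0]$.} Write $t=nt_0+r$ with $r\in[0,t_0)$. The cocycle identity \eqref{eq:solution-cocycle} and stationarity \eqref{eq:stationary} give
\[
\phi_t^\omega(y)=\phi_r^{\theta_{nt_0}\omega}\bigl(\phi_{nt_0}^\omega(y)\bigr),\qquad Y_{\theta_t\omega}=\phi_r^{\theta_{nt_0}\omega}\bigl(Y_{\theta_{nt_0}\omega}\bigr).
\]
I will apply Proposition~\ref{prop:manifold-tempered-finite-lipschitz} at $\theta_{nt_0}\omega$ with $y_1=Y_{\theta_{nt_0}\omega}$ and $y_2=\phi_{nt_0}^\omega(y)$. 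This requires $d_n\le\rho_L(\theta_{nt_0}\omega)$. Since $\rho_L$ is tempered, for any $\varepsilon>0$ we have $\rho_L(\theta_{nt_0}\omega)\ge e^{-\varepsilon n}$ for $n\ge N_2(\omega,\varepsilon)$. Because $d_n$ decays exponentially, the requirement therefore holds for all $n\ge N_3(\omega)$. For these $n$ the proposition yields
\[
\sup_{0\le r\le t_0}\|\phi_{nt_0+r}^\omega(y)-Y_{\theta_{nt_0+r}\omega}\|\le L_{\mathrm{loc}}(\theta_{nt_0}\omega)\,d_n .
\]
By \eqref{eq:Lloc-tempered}, $L_{\mathrm{loc}}(\theta_{nt_0}\omega)\le C_\varepsilon(\omega)e^{\varepsilon n}$. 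Choosing $\varepsilon<t_0(\nu_2-\nu_1)$ bounds the left side by a constant times $e^{-\nu_1(n+1)t_0}e^{\nu_1 t_0}$. This is exactly what is needed for $\sup_{t\ge N_3t_0}e^{\nu_1t}\|\phi_t^\omega(y)-Y_{\theta_t\omega}\|<\infty$.

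\emph{Step 3: the initial window.} On $[0,N_3t_0]$ the quantity $e^{\nu_1 t}\|\phi_t^\omega(y)-Y_{\theta_t\omega}\|$ is bounded, since $t\mapsto\phi_t^\omega(y)$ and $t\mapsto Y_{\theta_t\omega}=\phi_t^\omega(Y_\omega)$ are continuous paths on a compact interval. Alternatively, one can avoid this by shrinking the stable radius. By item (1) of Theorem~\ref{thm:manifold-local-stable}, $y\in\mathcal W^{s,\nu}_{\loc}(\omega)$ implies $\sup_n e^{nt_0\nu}d_n<\rho_2^\nu(\omega)$. Shrinking the radius so that $\rho_2^\nu(\omega)\le\rho_L(\omega)$ puts the $n=0$ step, and inductively every later step, inside the Lipschitz neighbourhood. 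This is presumably the meaning of ``after possibly shrinking the stable radius''.

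\emph{Main obstacle.} The delicate point is the uniformity in Step 2. Both $\rho_L(\theta_{nt_0}\omega)$ and $L_{\mathrm{loc}}(\theta_{nt_0}\omega)$ may degenerate along the orbit, and only their temperedness, i.e.\ subexponential growth or decay, compensates. The argument therefore depends on the gap $\nu_2-\nu_1>0$, which must absorb the $e^{\pm\varepsilon n}$ losses. If Proposition~\ref{prop:manifold-tempered-finite-lipschitz} only guarantees temperedness in the continuous sense, I would first restrict it to the discrete shifts $\theta_{nt_0}$, which is immediate from Definition~\ref{def:sp-tempered-radius}.
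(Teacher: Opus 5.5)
Your argument is correct and follows the paper's skeleton: write $t=nt_0+r$, use the cocycle identity and stationarity, and apply the finite-time bound of Proposition~\ref{prop:manifold-tempered-finite-lipschitz} at $\theta_{nt_0}\omega$. The difference lies in the discrete-time decay you feed in, and there your version is the one that works.

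The paper uses only membership in $\mathcal W^{s,\nu}_{\mathrm{loc}}(\omega)$, i.e.\ $d_n\le C(\omega,y)e^{-\nu nt_0}$. It then asserts that the temperedness \eqref{eq:Lloc-tempered} turns $L_{\mathrm{loc}}(\theta_{nt_0}\omega)e^{-\nu nt_0}$ into $C_{\nu_1}e^{-\nu_1 nt_0}$ with $\nu_1>\nu$. But temperedness only gives $L_{\mathrm{loc}}(\theta_{nt_0}\omega)\le C_\varepsilon e^{\varepsilon n}$, which loses rate rather than gaining it. That chain therefore proves the bound only for $\nu_1<\nu$.

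Your use of item (4) of Theorem~\ref{thm:manifold-local-stable}, with an intermediate $\nu_2\in(\nu_1,-\mu^-)$, supplies exactly the missing room for the $e^{\varepsilon n}$ loss. It also makes explicit the restriction $\nu_1<-\mu^-$, which the paper mentions only in the remark after the corollary. Your continuity argument on $[0,N_3t_0]$ further shows that no shrinking of the stable radius is needed.

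One minor point. The paper's Definition~\ref{def:sp-tempered-radius} is one-sided (it uses $\log^+$), so it does not give the lower bound $\rho_L(\theta_{nt_0}\omega)\ge e^{-\varepsilon n}$ you invoke. This is harmless: the proof of Proposition~\ref{prop:manifold-tempered-finite-lipschitz} works for any $\rho_L\le 1$, so you may take $\rho_L\equiv 1$. The requirement then reduces to $d_n\le 1$, which holds for large $n$.

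For the same reason, the alternative in your Step 3 should compare $\rho_2^\nu(\omega)e^{-\nu nt_0}$ with $\rho_L(\theta_{nt_0}\omega)$ rather than with $\rho_L(\omega)$. With $\rho_L\equiv 1$ these comparisons coincide, and that alternative is in any case dispensable.
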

\begin{proof}
For any $t\in \mathbb R_{\ge 0}$, we write  
\[
t:=nt_0+r,
\qquad
n\in\mathbb N_0,
\qquad
0\le r<t_0.
\]
For $y$ in the local stable manifold and after shrinking the stable radius, both arguments remain in the finite-time Lipschitz neighborhood for all large $n$. Then
\begin{align*}
\|\phi_t^\omega(y)-Y_{\theta_t\omega}\|=&\ 
\|\phi_r^{\theta_{nt_0}\omega}
\bigl(
\phi_{nt_0}^\omega(y)
\bigr)
-
\phi_r^{\theta_{nt_0}\omega}
\bigl(
Y_{\theta_{nt_0}\omega}
\bigr)\|\hspace{1cm}(\text{by~\meqref{eq:mecoc} and \eqref{eq:stationary} })\\
\le &\ L_{\mathrm loc}(\theta_{nt_0}\omega)\|\phi_{nt_0}^\omega(y)-Y_{\theta_{nt_0}\omega}\|\hspace{1cm}(\text{by }\eqref{eq:manifold-tempered-finite-lipschitz})\\
\le &\ L_{\mathrm{loc}}(\theta_{nt_0}\omega)C(\omega,y)\exp(-\nu nt_0)\hspace{1cm}(\text{by }y\in \mathcal{W}_{\mathrm{loc}}^{s,\nu}(\omega))\\
\le &\ C_{\nu_1}(\omega,y)\exp(-\nu_1 (nt_0+r))\hspace{1cm}(\text{by~\eqref{eq:Lloc-tempered} })\\
\le &\ C_{\nu_1}(\omega,y)\exp(-\nu_1 t).
\end{align*}
Thus, \[\sup_{t\ge 0}\exp(\nu_1 t)\|\phi_{t}^{\omega}(y)-Y_{\theta_{t}\omega}\|<\infty.\]
This completes the proof.
\end{proof}

\begin{remark}
Under the conditions of Corollary~\ref{cor:manifold-continuous-time-stable}, for any $y\in \mathcal W_{\mathrm{loc}}^{s,\nu}(\omega)$ with $0<\nu \le \nu_1 <-\mu^{-}$, we have 
$y\in \mathcal W_{\mathrm{loc}}^{s,\nu_1}(\omega)$ by Theorem~\ref{thm:manifold-local-stable}. Thus
$\mathcal{W}_{\mathrm{loc}}^{s,\nu_1}(\omega)\subseteq \mathcal{W}_{\mathrm{loc}}^{s, \nu}(\omega).$
\end{remark}

We now consider the properties of the local stable manifold in continuous time $t\in\mathbb R_{\ge0}$.

\begin{theorem}~\label{thm-continu}
	For every $t\in \mathbb R_{\ge 0}$, the following statements hold:
	\begin{enumerate}
		\item~\label{prop-continu-exp}  the solution flow still converges to $Y_\omega$ at an exponential rate, i.e.,
		\[\limsup_{t\to\infty}\frac{1}{t}\log\|\phi_t^\omega(z)-Y_{\theta_t\omega}\|< \infty,\qquad \forall z\in \mathcal{W}_{\mathrm{loc}}^{s,\nu}(\omega).\]
		\item ~\label{prop-contin-stab}
		if $t$ is large enough, then for any $0<\nu_1<\nu$, 
		\[\phi_t^\omega(\mathcal{W}_{\mathrm{loc}}^{s,\nu}(\omega))\subseteq \mathcal{W}_{\mathrm{loc}}^{s,\nu_1}(\theta_t \omega).\]
	\end{enumerate}
\end{theorem}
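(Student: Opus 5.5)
The plan is to reduce both statements to the discrete-time results of Theorem~\ref{thm:manifold-local-stable}, using the tempered finite-time Lipschitz bound of Proposition~\ref{prop:manifold-tempered-finite-lipschitz} to control the evolution between sampling times. As in the proof of Corollary~\ref{cor:manifold-continuous-time-stable}, write $t=nt_0+r$ with $n\in\mathbb N_0$ and $0\le r<t_0$. By the cocycle identity~\meqref{eq:mecoc} and stationarity~\eqref{eq:stationary},
\[
\phi_t^\omega(z)-Y_{\theta_t\omega}
=\phi_r^{\theta_{nt_0}\omega}\bigl(\phi_{nt_0}^\omega(z)\bigr)-\phi_r^{\theta_{nt_0}\omega}\bigl(Y_{\theta_{nt_0}\omega}\bigr).
\]

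For part~\eqref{prop-continu-exp}, Theorem~\ref{thm:manifold-local-stable}(1) gives $\|\phi_{nt_0}^\omega(z)-Y_{\theta_{nt_0}\omega}\|<\rho_2^\nu(\omega)e^{-\nu nt_0}$. Since $\rho_L$ is tempered and positive, $\rho_L(\theta_{nt_0}\omega)\ge e^{-\varepsilon nt_0}$ for $n$ large and any $\varepsilon<\nu$. Hence, for all large $n$, the point $\phi_{nt_0}^\omega(z)$ lies in the Lipschitz neighbourhood of $Y_{\theta_{nt_0}\omega}$. Applying~\eqref{eq:manifold-tempered-finite-lipschitz} at $\theta_{nt_0}\omega$ then yields
\[
\|\phi_t^\omega(z)-Y_{\theta_t\omega}\|\le L_{\mathrm{loc}}(\theta_{nt_0}\omega)\,\rho_2^\nu(\omega)\,e^{-\nu nt_0}.
\]
Taking $\frac1t\log$ and using~\eqref{eq:Lloc-tempered} together with $n/t\to1/t_0$, the $\limsup$ is at most $-\nu<0$. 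In particular it is finite, and in fact negative, which is the exponential convergence claimed. The finitely many small $n$ do not affect the $\limsup$.

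For part~\eqref{prop-contin-stab}, I would use the characterization in Theorem~\ref{thm:manifold-local-stable}(1) for the shifted fibre $\theta_t\omega$. It suffices to show that, for $t$ large, every $z\in\mathcal W^{s,\nu}_{\loc}(\omega)$ satisfies
\[
\sup_{m\in\mathbb N_0}e^{mt_0\nu_1}\bigl\|\phi_{mt_0}^{\theta_t\omega}(\phi_t^\omega(z))-Y_{\theta_{mt_0}\theta_t\omega}\bigr\|<\rho_1^{\nu_1}(\theta_t\omega).
\]
By the cocycle identity, the quantity inside the norm equals $\phi_{t+mt_0}^\omega(z)-Y_{\theta_{t+mt_0}\omega}$. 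The bound from part~\eqref{prop-continu-exp}, now made uniform in $z$ by using $\rho_2^\nu(\omega)$, gives
\[
\bigl\|\phi_{t+mt_0}^\omega(z)-Y_{\theta_{t+mt_0}\omega}\bigr\|\le C_\varepsilon(\omega)\,e^{-(\nu-\varepsilon)(t+mt_0)}.
\]
Choose $\varepsilon$ with $\nu_1<\nu-\varepsilon$. The supremum over $m$ is then at most $C_\varepsilon(\omega)e^{-(\nu-\varepsilon)t}$.

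The main obstacle is comparing this with $\rho_1^{\nu_1}(\theta_t\omega)$ at continuous times $t$. Theorem~\ref{thm:manifold-local-stable} only gives subexponential decay of $\rho_1^{\nu_1}$ along the discrete orbit $\theta_{pt_0}\omega$, not along $\theta_t\omega$. My first attempt would be to restrict attention to discrete times $t=nt_0$, where $\liminf\frac1p\log\rho_1^{\nu_1}(\theta_{pt_0}\omega)\ge0$ directly yields $C_\varepsilon e^{-(\nu-\varepsilon)nt_0}<\rho_1^{\nu_1}(\theta_{nt_0}\omega)$ for $n\ge N(\omega)$; this matches item~(3) of Theorem~\ref{thm:manifold-local-stable}. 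For general $t=nt_0+r$, I would interpolate using $L_{\mathrm{loc}}$ together with a continuous-time temperedness of the radius. The latter is either obtained by replacing $\rho_1^{\nu_1}$ with $\inf_{0\le r\le t_0}\rho_1^{\nu_1}(\theta_r\cdot)$, which I would argue remains tempered via the integrability of $M_{\mathrm{lin}}$ and $\log^+ f_{\mathrm{nl}}$, or, if that fails, by applying the discrete theorem with time step $t_0$ replaced by the rescaled grid $r+\mathbb N t_0$. This continuous-time temperedness is the step I expect to need the most care.
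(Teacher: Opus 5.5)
Your part (1) is sound, and in fact sharper than what the paper proves. The paper interpolates with the global estimate \eqref{phi-y-z-contin} and absorbs the $\widetilde Q_{\mathrm{lin}}$ factor into an unspecified tempered $f(\omega)$, obtaining only a finite $\limsup$. Your use of the tempered constant $L_{\mathrm{loc}}$ from Proposition~\ref{prop:manifold-tempered-finite-lipschitz} gives $\limsup\le-\nu<0$. One small correction: Definition~\ref{def:sp-tempered-radius} only controls $\log^+$, so temperedness of $\rho_L$ does not by itself give $\rho_L(\theta_{nt_0}\omega)\ge e^{-\varepsilon nt_0}$. However, the proof of that proposition works verbatim with $\rho_L\equiv1$, so the step survives.

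Part (2) has a genuine gap, exactly where you flag it, and neither of your remedies closes it as stated. To get $\phi_t^\omega(z)\in\mathcal W^{s,\nu_1}_{\loc}(\theta_t\omega)$ from Theorem~\ref{thm:manifold-local-stable}(1), you need two things:
\begin{enumerate}
\item $\theta_t\omega$ must lie in the good set $\widetilde\Omega$, but that set is only $\theta_{t_0}$-invariant.
\item You need $C_\varepsilon(\omega)e^{-(\nu-\varepsilon)t}<\rho_1^{\nu_1}(\theta_t\omega)$ for all large real $t$, whereas the theorem controls $\rho_1^{\nu_1}$ only along $\theta_{pt_0}\omega$.
\end{enumerate}

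Remedy (b) fails. Applying the discrete theorem at base point $\theta_r\omega$ yields, for each fixed $r$, its own null set and its own threshold $N_r(\omega)$. Nothing makes these uniform over the uncountably many $r\in[0,t_0)$, which is what $t=nt_0+r_n$ with varying $r_n$ requires.

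Remedy (a) aims at the right target, but a lower bound on $\rho_1^{\nu_1}$ does not follow from integrability of $M_{\mathrm{lin}}$ and $\log^+f_{\mathrm{nl}}$ alone. The radius is an output of the abstract theorem of \cite{GVR2023}, so you would have to open that construction. You would need to show that each ingredient ($h_1^{\nu_1}$, $h_2^{\nu_1}$, and the projection norms $\|\Pi_{S\parallel\tilde U}\|$) at $\theta_{nt_0+r}\omega$ is bounded by the corresponding ingredient at $\theta_{nt_0}\omega$, up to tempered factors and an arbitrarily small loss in the rate. One way is to conjugate with $\psi_r$, using $\psi_r^\omega S_\omega=S_{\theta_r\omega}$ and $\psi_r^\omega\tilde U_\omega=\tilde U_{\theta_r\omega}$ (Lemma~\ref{lemma-29}) together with $\|\psi_r\|,\|\psi_r^{-1}\|\le e^{M_{\mathrm{lin}}}$. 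Since $f$ enters $h_2$ at the shifted points, you would also need $\sup_{0\le r\le t_0}\log^+f_{\mathrm{nl}}(\theta_r\,\cdot)\in L^1$, not merely $\log^+f_{\mathrm{nl}}\in L^1$.

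As it stands, your argument proves the inclusion only for $t\in t_0\mathbb N$, which already follows from items (2) and (3) of Theorem~\ref{thm:manifold-local-stable}. Be aware that the paper's own proof of (2) does not supply this step either. It bounds the weighted supremum by $C(z,\omega)\,I(\omega,t,z)$ and infers membership from finiteness and the decaying factor $e^{-mt_0\nu}$. It never compares with $\rho_1^{\nu_1}(\theta_t\omega)$ or addresses the invariance of $\widetilde\Omega$.
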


\begin{proof}
\eqref{prop-continu-exp}. 
For any $t\in\mathbb R_{\ge 0}$, there exist $n\in\mathbb N_0$ and $0\le t_1< t_0$ such that $t=nt_0+t_1.$ For any $z\in \mathcal{W}_{\mathrm{loc}}^{s,\nu}(\omega)$, we have
\begin{align}\label{rem213}
&\ \sup_{0\le t_1 <t_0}\|\phi_{nt_0+t_1}^\omega(z)-\phi_{nt_0+t_1}^\omega(Y_\omega)\|\nonumber\\
=& \sup_{0\le t_1 <t_0}\|\phi_{t_1}^{\theta_{nt_0}\omega}(\phi_{nt_0}^\omega(z))-\phi_{t_1}^{\theta_{nt_0}\omega}(\phi_{nt_0}^{\omega}(Y_\omega))\|\nonumber\\
\le & \| \phi_{t_1}^{\theta_{nt_0}\omega}(\phi_{nt_0}^\omega(z))-\phi_{t_1}^{\theta_{nt_0}\omega}(\phi_{nt_0}^\omega(Y_\omega))\|_{D_{\bf X(\omega)}^\alpha([0,t_0])}\nonumber\\
\le & \|\phi_{nt_0}^\omega(z)-\phi_{nt_0}^{\omega}(Y_\omega)\|\exp(\widetilde{Q}_{\mathrm{lin}}(\|\phi_{nt_0}^{\omega}(z)\|,\|\phi_{nt_0}^\omega(Y_\omega)\|,
\|\mathbf Z(\omega)\|_{\mathcal{C}_{\bf X}^{\alpha}([0,T])},\|{\bf X(\omega)}\|_{\alpha,[0,T]})) \nonumber\\
&\hspace{1cm}(\text{by }\eqref{phi-y-z-contin})\\
\le & \|\phi_{nt_0}^\omega(z)- Y_{\theta_{nt_0}\omega}\|\cdot f(\omega),\nonumber
\end{align}
where $f:\Omega\to \mathbb{R}$ is tempered. Thus, 
\begin{align*}
&\ \limsup_{n\to \infty}\frac{1}{n}\log\big(\sup_{0\le t_1<t_0}(\|\phi_{nt_0+t_1}^\omega(z)-\phi_{nt_0+t_1}^\omega(Y_\omega)\|)\big)\\
\le &\ 
\limsup_{n\to \infty}\frac{1}{n}\big(\log(\|\phi_{nt_0}^\omega(z)-Y_{\theta_{nt_0}\omega}\|\cdot f(\omega))\big)\\
\le &\  C\cdot t_0\mu^{-},
\end{align*}
where $C$ is a constant depending on $f$, and so
\[\limsup_{t\to\infty}\frac{1}{t}\log\|\phi_t^\omega(z)-Y_{\theta_t\omega}\|< \infty.\]

\eqref{prop-contin-stab}.  We take $t\in \mathbb R_{\ge 0}$ with $t=mt_0+t_1$, where $0\le t_1<t_0$ and $0<\nu_1<\nu$. Then
\begin{align*}
	&\ \sup_{n\ge 0}\exp(nt_0 \nu_1)\|\phi_{nt_0}^{\theta_t\omega}\phi_t^\omega(z)-Y_{\theta_{nt_0+t}\omega}\|\\
	=&\ \sup_{n\ge 0}\exp(nt_0 \nu_1)\|\phi_{(m+n)t_0+t_1}^{\omega}(z)-\phi_{(m+n)t_0+t_1}^{\omega}(Y_\omega)\|\\
	\le &\ \sup_{n\ge 0}\exp(nt_0\nu_1)\|\phi_{(m+n)t_0}^\omega(z)-Y_{\theta_{(m+n)t_0}\omega}\|\exp(\widetilde{Q}_{\mathrm{lin}}(\|\phi_{nt_0}^{\omega}(z)\|,\|\phi_{nt_0}^\omega(Y_\omega)\|,
	\|\mathbf Z(\omega)\|_{\mathcal{C}_{\bf X}^{\alpha}([0,T])},\|{\bf X(\omega)}\|_{\alpha,[0,T]}))\\
	&\ \hspace{1cm}(\text{by }\eqref{rem213})\\
	\le &\ \sup_{n\ge 0}\exp(nt_0 \nu_1)\frac{1}{\exp((m+n)t_0\nu)}\exp((m+n)t_0\nu)\|\phi_{(m+n)t_0}^{\omega}(z)-Y_{\theta_{(m+n)t_0}\omega}\|\\
	&\ \times \exp(\widetilde{Q}_{\mathrm{lin}}(\|\phi_{nt_0}^{\omega}(z)\|,\|\phi_{nt_0}^\omega(Y_\omega)\|,
	\|\mathbf Z(\omega)\|_{\mathcal{C}_{\bf X}^{\alpha}([0,T])},\|{\bf X(\omega)}\|_{\alpha,[0,T]}))\\
	\le &\ \sup_{k\ge 0}\exp(kt_0\nu)\|\phi_{kt_0}^\omega(z)-Y_{\theta_{kt_0}\omega}\|\cdot \sup_{n\ge 0}\exp(nt_0\nu_1-(m+n)t_0\nu)\\
	&\ \times \exp(\widetilde{Q}_{\mathrm{lin}}(\|\phi_{nt_0}^{\omega}(z)\|,\|\phi_{nt_0}^\omega(Y_\omega)\|,
	\|\mathbf Z(\omega)\|_{\mathcal{C}_{\bf X}^{\alpha}([0,T])},\|{\bf X(\omega)}\|_{\alpha,[0,T]}))\\
	\triangleq &\ \sup_{k\ge 0}\exp(kt_0\nu)\|\phi_{kt_0}^\omega(z)-Y_{\theta_{kt_0}\omega}\|\times I(\omega,t,z).
\end{align*}
On the one hand, owing to $z\in \mathcal{W}_{\mathrm{loc}}^{s,\nu}(\omega)$, we have
\[C(z,\omega):=\sup_{k\ge 0}\exp(kt_0\nu)\|\phi_{kt_0}^\omega(z)-Y_{\theta_{kt_0}\omega}\|<\infty.\]
On the other hand, by the definition of $I(\omega,t,z)$,
\begin{align*}
	&\ I(\omega,t,z)\\
	=&\ \sup_{n\ge 0}\exp(nt_0\nu_1-(m+n)t_0\nu) \exp(\widetilde{Q}_{\mathrm{lin}}(\|\phi_{nt_0}^{\omega}(z)\|,\|\phi_{nt_0}^\omega(Y_\omega)\|,
	\|\mathbf Z(\omega)\|_{\mathcal{C}_{\bf X}^{\alpha}([0,T])},\|{\bf X(\omega)}\|_{\alpha,[0,T]}))\\
	=&\ \sup_{n\ge 0}\bigg( \exp(-mt_0 \nu)\exp(nt_0(\nu_1-\nu)) \bigg) \exp(\widetilde{Q}_{\mathrm{lin}}(\|\phi_{nt_0}^{\omega}(z)\|,\|\phi_{nt_0}^\omega(Y_\omega)\|,
	\|\mathbf Z(\omega)\|_{\mathcal{C}_{\bf X}^{\alpha}([0,T])},\|{\bf X(\omega)}\|_{\alpha,[0,T]})).
\end{align*}
Notice that $I(\omega,t,z)$ can be made arbitrarily small for sufficiently large $n$. 
Thus, for $0<\nu_1<\nu$,  
\begin{align*}
	\sup_{n\ge 0}\exp(nt_0 \nu_1)\|\phi_{nt_0}^{\theta_t\omega}\phi_t^\omega(z)-Y_{\theta_{nt_0+t}\omega}\|
	\le C(z,\omega)\times I(\omega,t,z) <\infty.
\end{align*}
That is, $\phi_t^\omega(z)\in \mathcal{W}_{\mathrm{loc}}^{s,\nu_1}(\theta_{t}\omega)$. So we get $\phi_t^\omega(\mathcal{W}_{\mathrm{loc}}^{s,\nu}(\omega))\subseteq \mathcal{W}_{\mathrm{loc}}^{s,\nu_1}(\theta_t \omega)$, as required. 
\end{proof}

The corresponding continuous-time statements for local unstable manifolds follow by the same argument applied to backward orbits.

\begin{remark}
For local unstable manifolds, we obtain the following statements, analogous to those in Theorem~\ref{thm-continu}.
\begin{enumerate}
\item For $z_\omega\in \mathcal{W}_{\mathrm{loc}}^{u,\nu}(\omega)$, if $t\in\mathbb R_{>0}$, then
 \[\limsup_{t\to \infty}\frac{1}{t}\log\|z_{\theta_{-t}\omega}-Y_{\theta_{-t}\omega}\|<\infty.\]
\item Moreover, if $0< \nu_1<\nu$, then for sufficiently large $t$, one has
$
 \mathcal{W}_{\mathrm{loc}}^{u,\nu}(\omega)\subseteq \phi_t^{\theta_{-t}\omega}(\mathcal{W}_{\mathrm{loc}}^{u,\nu_1}(\theta_{-t}\omega)).
 $
\end{enumerate}
\end{remark}

\vskip 0.2in

\noindent
{\bf Acknowledgments.} Xing Gao is supported by the National Natural Science Foundation of China (12571019), the Natural Science Foundation of Gansu Province (25JRRA644) and Innovative Fundamental Research Group Project of Gansu Province (23JRRA684).

\noindent
{\bf Declaration of interests. } The authors have no conflicts of interest to disclose.

\noindent
{\bf Data availability. } Data sharing is not applicable as no new data were created or analyzed.

\end{document}